\documentclass[onefignum,onetabnum]{siamart190516}

\def\ve{\varepsilon}

\def\R{\mathbb{R}}
\def\mc{\mathcal}
\def\mb{\mathbf}
\def\mbb{\mathbb}
\def\ra{\rightarrow}
\def\P{\mathbb{P}}

\def\W{\mathbf{W}}
\def\G{\mb{G}}
\def\J{\mb{J}}
\def\H{\mathbf{H}}
\def\I{\mb{I}}

\def\ve{\varepsilon}

\def\mbb{\mathbb}
\def\mb{\mathbf}
\def\mc{\mathcal}
\def\wh{\widehat}
\def\ul{\widetilde}
\def\ol{\overline}
\def\ul{\underline}

\def\bds{\boldsymbol}
\newcommand{\mn}[1]{{\left\vert\kern-0.25ex\left\vert\kern-0.25ex\left\vert\kern0.3ex #1 
    \kern0.3ex\right\vert\kern-0.25ex\right\vert\kern-0.25ex\right\vert}}

\usepackage{amsfonts}
\usepackage{graphicx}
\usepackage{epstopdf}
\usepackage[utf8]{inputenc}
\usepackage[T1]{fontenc}
\usepackage{mathtools}
\usepackage{makecell}
\usepackage{braket}

\newtheorem{assumption}{Assumption}[section]
\usepackage{mathtools}
\usepackage{xfrac}
\usepackage{microtype}
\usepackage{tabularx,amsmath,amssymb,graphicx,paralist,subfigure,pdfpages,cite,algorithm,paralist,multirow,comment}
\usepackage{appendix}
\usepackage{lipsum}
\usepackage{verbatim}
\usepackage{algorithmicx}
\usepackage{algpseudocode}
\usepackage{bbm}
\newsiamremark{remark}{Remark}

\def\ve{\varepsilon}

\def\mbb{\mathbb}
\def\mb{\mathbf}
\def\mc{\mathcal}
\def\wt{\widetilde}
\def\wh{\widehat}
\def\ol{\overline}
\def\ul{\underline}

\def\x{\mb{x}}
\def\y{\mb{y}}
\def\v{\mb{v}}
\def\a{\alpha}
\def\ra{\rightarrow}
\def\bds{\boldsymbol}

\def\H{\mc H}
\def\K{\mc K}
\def\b{\big}

\def\SGD{\texttt{SGD}}
\def\DSGD{{\texttt{DSGD}}}
\def\DSGT{{\texttt{DSGT}}}
\def\D2{{\texttt{D2}}}

\def\SARAH{{\texttt{SARAH}}}

\def\GS{{\texttt{GT-SARAH}}}

\def\gf{\nabla f}
\def\E{\mbb{E}}
\def\tsum{\textstyle{\sum}}
\def\ave{\frac{1}{n}(\mb{1}_n^\top\otimes \I_p)}
\def\F{\mc{F}^{t,s}}
\def\s{\sigma}
\def\t{\tau}
\def\n{\nonumber}
\def\mod{\mbox{mod}}
\def\e{\epsilon}
\DeclarePairedDelimiter\ceil{\lceil}{\rceil}
\DeclarePairedDelimiter\floor{\lfloor}{\rfloor}

\newenvironment{T1}
  {\begin{proof}[Proof~of~\cref{asp}]}
  {\end{proof}}

\newenvironment{T2}
  {\begin{proof}[Proof~of~\cref{out}]}
  {\end{proof}}
  
\newenvironment{T3}
  {\begin{proof}[Proof~of~\cref{main}]}
  {\end{proof}}

\newenvironment{L4}
  {\begin{proof}[Proof~of~Lemma~\ref{vrlem}]}
  {\end{proof}}







\begin{document}
\title{Fast decentralized non-convex finite-sum optimization with recursive variance reduction\thanks{This work has been partially supported by NSF under awards \#1513936, \#1903972, and \#1935555}.}
\author{
Ran Xin\thanks{Department of Electrical and Computer Engineering (ECE), Carnegie Mellon University, Pittsburgh, PA (\email{ranx@andrew.cmu.edu}, \email{soummyak@andrew.cmu.edu}).} 
\and
Usman A. Khan\thanks{Department of ECE, Tufts University, Medford, MA (\email{khan@ece.tufts.edu}).}
\and
Soummya Kar\footnotemark[2]}
\maketitle

\begin{abstract}
{\color{black}This paper considers decentralized minimization of~$N:=nm$ smooth non-convex cost functions equally divided over a directed network of~$n$ nodes. Specifically, we describe a stochastic first-order gradient method, called~\texttt{GT-SARAH}, that employs a \texttt{SARAH}-type variance reduction technique and gradient tracking (\texttt{GT}) to address the stochastic and decentralized nature of the problem. We show that \texttt{GT-SARAH}, with appropriate algorithmic parameters, finds an~$\epsilon$-accurate first-order stationary point with $\mc{O}\big(\max\big\{N^{\sfrac{1}{2}},n(1-\lambda)^{-2},n^{\sfrac{2}{3}}m^{\sfrac{1}{3}}(1-\lambda)^{-1}\big\}L\epsilon^{-2}\big)$ gradient complexity, where~${(1-\lambda)\in(0,1]}$ is the spectral gap of the network weight matrix and~$L$ is the smoothness parameter of the cost functions. This gradient complexity outperforms that of the existing decentralized stochastic gradient methods. In particular, in a big-data regime such that~${n = \mc{O}(N^{\sfrac{1}{2}}(1-\lambda)^{3})}$, this gradient complexity furthers reduces to~${\mc{O}(N^{\sfrac{1}{2}}L\epsilon^{-2})}$, independent of the network topology, and matches that of the centralized near-optimal variance-reduced methods. Moreover, in this regime \texttt{GT-SARAH} achieves a \emph{non-asymptotic linear speedup}, in that, the total number of gradient computations at each node is reduced by a factor of~$1/n$ compared to the centralized near-optimal algorithms that perform all gradient computations at a single node. To the best of our knowledge, \texttt{GT-SARAH} is the first algorithm that achieves this property. In addition, we show that appropriate choices of local minibatch size balance the trade-offs between the gradient and communication complexity of \texttt{GT-SARAH}.  Over infinite time horizon, we establish that all nodes in \texttt{GT-SARAH} asymptotically achieve consensus and converge to a first-order stationary point in the almost sure and mean-squared sense.}
\end{abstract}

\section{Introduction}
\label{sec:intro}
We consider decentralized finite-sum minimization of~${N:=nm}$ cost functions that takes the following form:
\begin{align}\label{P}
\min_{\mb{x}\in\mbb{R}^p}F(\mb x) := \tfrac{1}{n}\tsum_{i=1}^n f_i(\mb x), \qquad f_i(\mb x) := \tfrac{1}{m}\sum_{j=1}^{m}f_{i,j}(\mb x),
\end{align}
where each~$f_i:\mbb R^p\rightarrow\mbb R$, further decomposed as the average of~$m$ component costs $\{f_{i,j}\}_{j=1}^m$, is available only at the~$i$-th node in a network of~$n$ nodes. The network is abstracted as a directed graph ${\mc G := \{\mc{V},\mc{E}\}}$, where~${\mc{V} := \{1,\cdots,n\}}$ is the set of node indices and~$\mc{E}\subseteq\mc{V}\times\mc{V}$ is the collection of ordered pairs~${(i,r),i,r\in\mc{V}}$, such that node~$r$ sends information to node~$i$. {\color{black}We adopt the convention that~$(i,i)\in\mc{E},\forall i\in\mc{V}$.} Each node in the network is restricted to local computation and communication with its neighbors. Throughout the paper, we focus on the case where each~$f_{i,j}$ is differentiable, not necessarily convex, and~$F$ is bounded below. This formulation often appears in decentralized empirical risk minimization, where each local cost~$f_i$ can be considered as an empirical risk computed over a finite number of~$m$ local data samples~\cite{SPM_Xin}, and lies at the heart of many modern machine learning problems~\cite{OPT_ML,tutorial_nedich,DOPT_survey_Yang}. Examples include non-convex linear models and neural networks. When the local data size~$m$ is large, evaluating the exact gradient~$\nabla f_i$ of each local cost at each iteration becomes computationally expensive and methods that efficiently sample each local data batch are preferable. {\color{black}We are thus interested in designing fast stochastic gradient algorithms to find an~$\epsilon$-accurate first-order stationary point~$\wh{\x}\in\R^p$ such that~$\E\big[\|\nabla F(\wh{\x})\|^2\big]\leq\epsilon^2$}. 

Towards Problem~\eqref{P}, \DSGD~\cite{DGD_Yin,DSGD_nedich,diffusion_Chen,DSGD1_Chen}, a decentralized version of stochastic gradient descent (\SGD)~\cite{OPT_ML,SGD_Lan,SGD_Nemirovski}, is often used to address the large-scale and decentralized nature of the data.~\DSGD~is popular for several inference and learning tasks due to its simplicity of implementation and speedup in comparison to its centralized counterparts~\cite{DSGD_nips}. \texttt{DSGD} and its variants have been been extensively studied for different computation and communication needs, e.g., momentum~\cite{slowmo}, directed graphs~\cite{SGP_ICML}, escaping saddle-points~\cite{DSGD_Swenson,DSGD_vlaski_2}, zeroth-order schemes~\cite{DSGD_ZO_SICON_Hong}, swarming-based implementations~\cite{Swarming_Pu}, and constrained problems~\cite{DSGD_constraints_YDM}. 

\subsection{Challenges with DSGD} 
The performance of~\DSGD~for the non-convex Problem~\eqref{P} however suffers from three major challenges:
\begin{inparaenum}[(i)]
\item the non-degenerate variance of the stochastic gradients at each node; 
\item the dissimilarity among the local functions across the nodes; and
\item the transient time to reach the network topology independent region.
\end{inparaenum}
To elaborate these issues, we recap~\DSGD~for Problem~\eqref{P}~and its convergence results as follows. Let~$\mb x_i^k\in\mbb{R}^p$ denote the iterate of~\DSGD~at node~$i$ and iteration~$k$. At each node~$i$,~\DSGD~performs~\cite{DSGD_nedich,diffusion_Chen}
\begin{align}
\mb x_i^{k+1} = \tsum_{r=1}^n \ul w_{ir} \mb x_r^k - \alpha \cdot \mb{g}_i^k, \qquad k\geq0,
\end{align}
where~$\ul{\mb{W}}=\{\ul w_{ir}\}\in\mbb{R}^{n\times n}$ is a weight matrix that respects the network topology, while~$\mb{g}_i^k\in\mbb{R}^p$ is a stochastic gradient such that~$\E[\mb{g}_i^k|\mb{x}_i^k] = \nabla f_i(\mb{x}_i^k)$. 
Assuming the \emph{bounded variance} of each local stochastic gradient~$\mb{g}_i^k$, the \emph{bounded dissimilarity} between the local and the global gradient~\cite{DSGD_nips}, i.e., for some~$\nu>0$ and~$\zeta>0$,
\begin{align}\label{boundedness_asp}
\sup_{i\in\mc{V},k\geq0}\mathbb{E}\Big[\b\|\mb{g}_i^k-\nabla f_i(\mb x_i^k)\b\|^2\Big] \leq\nu^2 ~\mbox{and}~\sup_{\mb{x}\in\mbb{R}^p}\tfrac{1}{n}\tsum_{i=1}^{n}\left\|\nabla f_i(\mb{x}) - \nabla F(\mb{x})\right\|^2\leq\zeta^2,
\end{align}
and~$L$-smoothness of each~$f_i$, it is shown in~\cite{DSGD_nips} that, for small enough~$\a>0$,
\begin{equation}\label{DSGD_perf}
\frac{1}{K}\sum_{k=0}^{K-1}\E\Big[\big\|\nabla F(\ol{\mb{x}}^k)\big\|^2\Big] =
\mc{O}\left(\frac{F(\ol{\mb x}^0)-F^*}{\alpha K} + \frac{\alpha L\nu^2}{n}
+ \frac{\alpha^2L^2\nu^2}{1-\lambda} + \frac{\alpha^2L^2\zeta^2}{(1-\lambda)^2}\right),
\end{equation}
where~${\ol{\mb x}^k := \frac{1}{n}\sum_{i=1}^n\mb{x}_i^k}$ and~${(1-\lambda)}\in(0,1]$ is the spectral gap of the weight matrix~$\ul{\mb W}$. It then follows that~\cite{DSGD_nips} for~$K$ \emph{large enough}, see (iii) below, and with an appropriate step-size~$\alpha$, \textbf{\texttt{DSGD}} finds an~$\e$-accurate first-order stationary point of~$F$ in~$\mc O(\nu^2L\e^{-4})$ stochastic gradient computations across all nodes and therefore achieves \emph{asymptotic} linear speedup compared to the centralized~\SGD~\cite{OPT_ML,SGD_Lan} that executes at a single node. Clearly,~there are three issues with the convergence properties of~\DSGD:

(i)~Due to the non-degenerate stochastic gradient variance, the gradient complexity of \texttt{DSGD} does not match that of the centralized near-optimal variance-reduced methods when minimizing a finite-sum of smooth non-convex functions~\cite{sarah_ncvx,SPIDER,spiderboost}.

(ii)~The bounded dissimilarity assumption on the local and global gradients~\cite{DSGD_nips,DSGD_vlaski_2,SGP_ICML} or the coercivity of each local function~\cite{DSGD_Swenson} is essential for establishing the convergence of~\DSGD. 
In fact, a counterexample has been shown in~\cite{SPM_Hong} that \emph{\DSGD~diverges for any constant step-size} when these types of assumptions are violated.
Furthermore, the practical performance of~\DSGD~degrades significantly when the local and the global gradients are substantially different, i.e., when the data distributions across the nodes are largely heterogeneous~\cite{D2,SED,improved_DSGT_Xin}.  

(iii)~\DSGD~achieves~linear speedup only \emph{asymptotically}, i.e., after a finite number of transient iterations that is a polynomial function of~$n,\nu,\zeta,L,$ and~${(1-\lambda)}$~\cite{DSGD_nips,slowmo,DSGD_Pu}. 

\subsection{Main Contributions} 
This paper proposes~\texttt{GT-SARAH}, a novel decentralized stochastic variance-reduced gradient method that provably addresses the aforementioned challenges posed by~\DSGD. \texttt{GT-SARAH} is based on a \textit{local \texttt{SARAH}-type gradient estimator}~\cite{sarah_ncvx,SPIDER}, which removes the variance incurred by the local stochastic gradients, and \textit{global gradient tracking (\texttt{GT})}~\cite{NEXT_scutari,MP_scutari,AugDGM}, that fuses the gradient estimators across the nodes such that the bounded dissimilarity or the coercivity type assumptions are not required. Our main technical contributions are summarized in the following. 

{\color{black}
(i) {\color{black}We show that \texttt{GT-SARAH}, under appropriate algorithmic parameters, finds an~$\epsilon$-accurate first-order stationary point~$\wh{\x}$ of~$F$ such that~$\E\big[\|\nabla F(\wh{\x})\|^2\big]\leq\epsilon^2$ in} at most
$\mc{H}_R:=\mc{O}\big(\max\big\{N^{\sfrac{1}{2}},n(1-\lambda)^{-2},n^{\sfrac{2}{3}}m^{\sfrac{1}{3}}(1-\lambda)^{-1}\big\}L\epsilon^{-2}\big)$
component gradient computations across all nodes. {\color{black}The gradient complexity~$\H_R$ significantly outperforms that of the existing decentralized stochastic gradient algorithms for Problem~\eqref{P}; see~\cref{T1} for a formal comparison.} 

(ii) In a big-data regime such that~${n=\mc{O}(N^{\sfrac{1}{2}}(1-\lambda)^{3})}$, the gradient complexity $\mc{H}_R$ of \texttt{GT-SARAH} reduces to ${\wt{\mc{H}}_R := \mc{O}(N^{\sfrac{1}{2}}L\e^{-2})}$. {\color{black}We emphasize that~$\wt{\mc{H}}_R$ is independent of the network topology and matches that of the centralized near-optimal variance-reduced methods~\cite{sarah_ncvx,SPIDER,spiderboost}
under a slightly stronger smoothness assumption; see~\cref{D_C_smooth} for details.} Furthermore, since~\texttt{GT-SARAH} computes~$n$ gradients in parallel at each iteration, its per-node gradient complexity in this regime is~$\mc{O}(N^{\sfrac{1}{2}}n^{-1}\e^{-2})$, demonstrating a \emph{non-asymptotic linear speedup} compared with the aforementioned centralized near-optimal methods~\cite{SPIDER,spiderboost,sarah_ncvx} that perform all gradient computations at a single node. To the best of our knowledge, \texttt{GT-SARAH} is the first decentralized method that achieves this property for Problem~\eqref{P}.

(iii) {\color{black}We show that choosing the local minibatch size of \texttt{GT-SARAH} judiciously balances the trade-offs between the gradient and communication complexity; see~\cref{main1} and Subsection~\ref{two_regimes} for details.} 

(iv) We establish that all nodes in \texttt{GT-SARAH}
asymptotically achieve consensus and converge to a first-order stationary point of~$F$ over infinite time horizon in the almost sure and mean-squared sense.}

\subsection{Related work} Several algorithms have been proposed to improve certain aspects of \texttt{DSGD}. For example, a stochastic variant of \texttt{EXTRA}~\cite{EXTRA}, \texttt{Exact Diffusion}~\cite{SED}, and \texttt{NIDS}~\cite{NIDS}, called~\D2~\cite{D2}, removes the bounded dissimilarity assumption in \texttt{DSGD} based on a bias-correction principle. \texttt{DSGT} \cite{improved_DSGT_Xin}, introduced in~\cite{MP_Pu} for smooth and strongly convex problems, achieves a similar theoretical performance as~\D2 via gradient tracking~\cite{NEXT_scutari,harnessing,DIGing,GT_Wai}, but with more general choices of weight matrices. Reference~\cite{SPD_Lei} establishes asymptotic properties of a decentralized stochastic primal-dual algorithm for smooth convex problems. {\color{black}Reference~\cite{commslide_lan} develops decentralized primal-dual communication sliding algorithms that achieve communication efficiency for convex and possibly nonsmooth problems.}
These methods however are subject to the non-degenerate variance of the stochastic gradients. Inspired by the variance-reduction techniques for centralized stochastic optimization~\cite{Prox_SVRG,SARAH,SAGA,SVRG_allen,SVRG_reddi,spiderboost,SPIDER,sarah_ncvx,DVR_master_worker,inexact_SARAH,optimal_zhou}, decentralized variance-reduced methods for smooth and strongly-convex problems have been proposed recently, e.g., in~\cite{DSA,DAVRG,GTVR,SPM_Xin,Network-DANE}; in particular, the integration of gradient tracking and variance reduction described in this paper was introduced in~\cite{GTVR,SPM_Xin} to obtain linear convergence. 

{\color{black}
A recent paper~\cite{D_Get} proposes \texttt{D-GET} for Problem~\eqref{P}, which also considers local~\SARAH-type variance reduction and gradient tracking. In the following, we compare our work to~\cite{D_Get} from a few major technical aspects.\footnote{{\color{black}Note that~\cite{D_Get} uses $\E[\|\nabla F(\wh{\x})\|^2]\leq\epsilon$ as the performance metric, while we use $\E[\|\nabla F(\wh{\x})\|^2]\leq\epsilon^2$ in this paper. We state the complexities of \texttt{D-GET} established in~\cite{D_Get} under our metric for consistency.}}  
\emph{First}, the gradient complexity~$\H_R$ of \texttt{GT-SARAH} improves that of \texttt{D-GET} in terms of the dependence on~$n$ and~$m$; see~\cref{T1}. In particular, in a big-data regime,~$n = \mc{O}(N^{\sfrac{1}{2}}(1-\lambda)^3)$,~$\H_R$ matches the gradient complexity of the centralized near-optimal methods~\cite{sarah_ncvx,SPIDER,spiderboost}; in contrast, the gradient complexity of \texttt{D-GET} is worse than that of the centralized near-optimal methods by a factor of~$n^{\sfrac{1}{2}}$ even if the network is fully-connected. \emph{Second}, the complexity results of \texttt{D-GET} are attained with a specific local minibatch size~$m^{\sfrac{1}{2}}$. Conversely, we establish general complexity bounds of \texttt{GT-SARAH} with arbitrary local minibatch size and characterize the computation-communication trade-offs induced by different choices of the minibatch size. \emph{Third}, the Lyapunov function based convergence analysis of \texttt{D-GET} does not show explicit dependence of several important problem parameters, such as~$(1-\lambda)$ and~$L$, while the analysis in this work reveals explicitly the dependence of all problem related parameters and sheds light on their implications. \emph{Fourth}, we note that both \texttt{GT-SARAH} and \texttt{D-GET} achieve a worst case communication complexity of the form~$\mc{O}((1-\lambda)^{-a}L^b\epsilon^{-2})$, independent of~$m$ and~$n$, for some~$a,b\in\R^{+}$. Since the dependence of~$a$ and~$b$ in \texttt{D-GET} are not explicit, it is unclear which algorithm achieves a lower communication complexity. \emph{Finally}, \cite{D_Get} presents~a variant of \texttt{D-GET} that is applicable to a more general online setting such as expected risk minimization.}

\begin{table}[!ht]
\footnotesize
\renewcommand{\arraystretch}{2.3}
\setlength{\tabcolsep}{3.5pt}
\caption{A comparison of the gradient complexities of the-state-of-the-art decentralized stochastic gradient methods to minimize a sum of~${N=nm}$ smooth non-convex functions equally divided among~$n$ nodes. {\color{black}The gradient complexity is in terms of the total number of component gradient computations across all nodes to find a first-order stationary point~$\wh{\x}\in\R^p$ such that~$\E\big[\|\nabla F(\wh{\x})\|^2\big]\leq\epsilon^2$.} In the table,~$\nu^2$ denotes the bounded variance of the stochastic gradients described in~\eqref{boundedness_asp},~$(1-\lambda)\in(0,1]$ is the spectral gap of the network weight matrix and~$L$ is the smoothness parameter of the cost functions. We note that the complexities of \texttt{DSGD}, \texttt{D2}, \texttt{DSGT} in the table are established in the setting of stochastic first-order oracles, which is more general than the finite-sum formulation considered in this paper. Moreover, the complexities of \texttt{DSGD}, \texttt{D2}, \texttt{DSGT} in the table are stated in the regime that~$\epsilon$ is small enough for simplicity; see~\cite{DSGD_nips,D2,improved_DSGT_Xin} for their precise expressions. Finally, we note that only the best possible gradient complexity of \texttt{GT-SARAH}, in the sense of \cref{main}, is presented in the table for conciseness; see~\cref{main1} and Subsection~\ref{two_regimes} for detailed discussion on balancing the trade-offs between the gradient and communication complexity of \texttt{GT-SARAH}.}
\label{T1}
\begin{center}
\begin{tabular}{|c|c|c|}
\hline
\textbf{Algorithm} & \textbf{Gradient complexity} & \textbf{Remarks}\\ \hline
\DSGD~\cite{DSGD_nips} & $\mc{O}\left(\dfrac{\nu^2L}{\epsilon^{4}}\right)$ & \makecell{bounded variance, \\ bounded dissimilarity}  \\ \hline
\D2~\cite{D2} & $\mc{O}\left(\dfrac{\nu^2L}{\epsilon^{4}}\right)$ & bounded variance \\ \hline
\DSGT~\cite{improved_DSGT_Xin} & $\mc{O}\left(\dfrac{\nu^2L}{\epsilon^{4}}\right)$ & bounded variance \\ \hline
\texttt{D-GET}~\cite{D_Get} &  $\mc{O}\bigg(\dfrac{n^{\sfrac{1}{2}}N^{\sfrac{1}{2}}L^b}{(1-\lambda)^{a}\e^2}\bigg)$ & \makecell{$a,b\in\R^{+}$
are not \\ explicitly shown in~\cite{D_Get}}  \\ \hline 
\makecell{\GS\\(\textbf{this work})}  &  $\mc{O}\bigg(\max\bigg\{N^{\sfrac{1}{2}},\dfrac{n}{(1-\lambda)^2},\dfrac{n^{\sfrac{2}{3}}m^{\sfrac{1}{3}}}{1-\lambda}\bigg\}\dfrac{L}{\epsilon^2}\bigg)$ & \makecell{See~\cref{main} and \\ \cref{main1}} \\  
\hline
\end{tabular}
\end{center}
\end{table}

\subsection{Paper outline and notation}
The proposed~\GS~algorithm is developed in Section~\ref{alg_deve}. We present the convergence results of~\GS~and discuss their implications in Section~\ref{main_results}. Section~\ref{conv_analysis} presents the convergence analysis. Section~\ref{numer} presents numerical experiments while Section~\ref{concl} concludes the paper.

The set of positive integers and real numbers are denoted by~$\mbb{Z}^{+}$ and~$\mbb{R}^{+}$ respectively. For any $a\in\mbb{R}$,~$\floor*{a}$ denotes the largest integer~$i$ such that~$i\leq a$; similarly, $\ceil*{a}$ denotes the smallest integer~$i$ such that~$i\geq a$; 
We use lowercase bold letters to denote column vectors and uppercase bold letters to denote matrices. The matrix,~$\mb{I}_d$, represents the~$d\times d$ identity; $\mb{1}_d$ and $\mb{0}_d$ are the~$d$-dimensional column vectors of all ones and zeros, respectively. The Kronecker product of two matrices~$\mb{A}$ and~$\mb{B}$ is denoted by~$\mb{A}\otimes \mb{B}$. We use~$\|\cdot\|$ to denote the Euclidean norm of a vector or the spectral norm of a matrix. For a matrix~$\mb{X}$, we use~$\rho(\mb{X})$ to denote its spectral radius,~$\lambda_2(\mb{X})$ to denote its second largest singular value, and~$\det(\mb{X})$ to denote its determinant. Matrix inequalities are interpreted in the entry-wise sense. We use~$\sigma(\cdot)$ to denote the~$\s$-algebra generated by the random variables and/or sets in its argument. The empty set is denoted by~$\phi$.

\section{Algorithm Development:~\GS}\label{alg_deve}
We now systematically build the proposed algorithm~\GS~and provide the basic intuition. We recall that the performance~\eqref{DSGD_perf} of~\DSGD, in addition to the first term which is similar to that of the centralized batch gradient descent, has three additional bias terms. The second and third bias terms in~\eqref{DSGD_perf} depend on the variance~$\nu^2$ of local stochastic gradients. A variance-reduced gradient estimation procedure of~\SARAH-type~\cite{SPIDER,sarah_ncvx}, employed locally at each node~$i$ in \texttt{GT-SARAH}, removes~$\nu^2$. The last bias term in~\eqref{DSGD_perf} is due to the dissimilarity~$\zeta^2$ between the local gradients~$\{\nabla f_i\}_{i=1}^n$ and the global gradient~$\nabla F$. A dynamic fusion mechanism, called gradient tracking~\cite{harnessing,NEXT_scutari,DIGing,AugDGM,GT_jakovetic}, removes~$\zeta^2$ by tracking the average of the local gradient estimators in \texttt{GT-SARAH} to learn the global gradient at each node. The resulting algorithm is illustrated in Fig.~\ref{GTVRflow}.
\begin{figure}[!ht]
\centering
\includegraphics[width=5.1in]{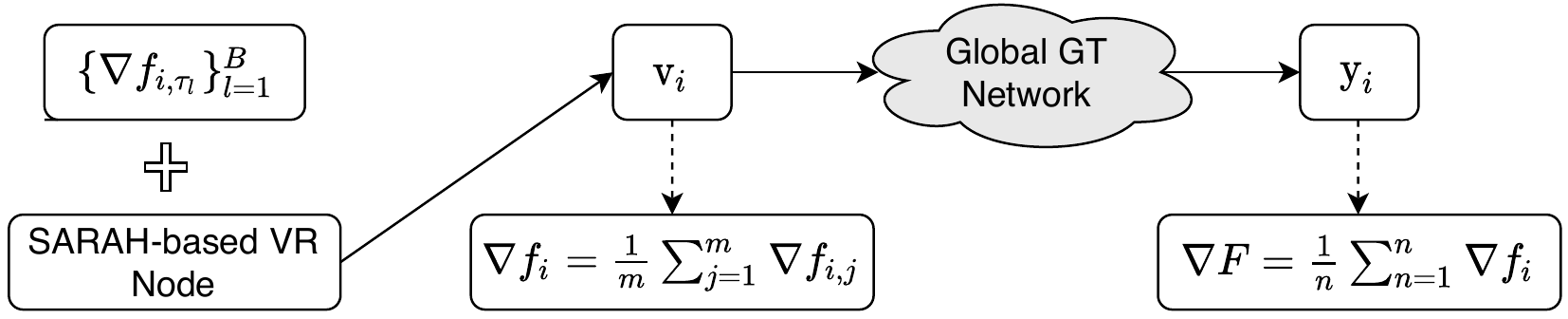}
\vspace{-0.4cm}
\caption{Each node~$i$ samples a minibatch of stochastic gradients~$\{\nabla f_{i,\tau_{l}}\}_{l=1}^B$ at each iteration from its local data batch and computes an estimator~$\mb v_i$ of its local batch gradient~$\nabla f_i$ via a~\SARAH-type~variance reduction (VR) procedure. These local gradient estimators~$\mb v_i$'s are then fused over the network via a gradient tracking technique to obtain~$\mb y_i$'s that approximate the global gradient~$\nabla F$.}
\label{GTVRflow}
\end{figure}

\subsection{Detailed Implementation} The complete implementation of~\GS~is summarized in~\cref{GT-SARAH}, where we assume that all nodes start from the same point~$\ol{\mb{x}}^{0,1}\in\mbb{R}^p$.
\GS~can be interpreted as a double loop method with an outer loop, indexed by~$s$, and an inner loop, indexed by~$t$. At the beginning of each outer loop~$s$,~\GS~computes the local batch gradient~${\mb v_i^{0,s}:= \nabla f_i(\mb{x}_i^{0,s})}$ at each node~$i$. These batch gradients are then used to compute the first iteration of the global gradient tracker~$\mb y_{i}^{1,s}$ and the state update~$\mb x_i^{1,s}$. The three quantities,~$\mb v_i^{0,s},\mb y_{i}^{1,s},\mb x_i^{1,s}$, set up the subsequent inner loop iterations. At each inner loop iteration~$t\geq 1$, each node~$i$ samples two minibatch stochastic gradients
from its local data that are used to construct the gradient estimator~$\mb v_i^{t,s}$. We note that the gradient estimator is of recursive nature, i.e., it depends on~$\mb v_i^{t-1,s}$ and the minibatch stochastic gradients evaluated at the current and the past states~$\mb x_i^{t,s}$ and~$\mb x_i^{t-1,s}$. The next step is to update~$\mb y_i^{t+1,s}$ based on the gradient tracking protocol. Finally, the state~$\mb x_i^{t+1,s}$ at each node~$i$ is computed as a convex combination of the states of the neighboring nodes followed by a descent in the direction of the gradient tracker~$\mb y_i^{t+1,s}$. The latest updates~$\mb{x}_i^{q+1,s}$,~$\mb{y}_i^{q+1,s}$ and~$\mb{v}_i^{q,s}$ then set up the next inner-outer loop cycle of~\GS.  

\setlength{\textfloatsep}{10pt}
\begin{algorithm}
\caption{\GS~at each node~$i$}
\label{GT-SARAH}
\begin{algorithmic}[1]
\Require{${\x_i^{0,1} \!= \ol{\mb{x}}^{0,1}\in\mbb R^p}$, ${\alpha\in\R^{+}}$, $q\in\mbb Z^+$,~$S\in\mbb Z^+$,~$B\in\mbb Z^+$,~$\{\ul{w}_{ir}\}_{r=1}^n$,
${\mb{y}_i^{0,1} = \mb{0}_p}$, ${\mb{v}_i^{-1,1} = \mb{0}_p}$}.
\For{$s = 1,2,\cdots,S$}
\vspace{0.1cm}
\State{$\v_i^{0,s} = \gf_i(\x_i^{0,s}) = \frac{1}{m}\sum_{j=1}^m\nabla f_{i,j}(\x_i^{0,s})$;} \Comment{batch gradient computation}
\vspace{0.1cm}
\State{$\mb{y}_{i}^{1,s} = \sum_{r=1}^{n}\ul{w}_{ir}\mb{y}_i^{0,s} + \mb{v}_i^{0,s} - \mb{v}_i^{-1,s}$} \Comment{gradient tracking}
\vspace{0.1cm}
\State{$\mb{x}_{i}^{1,s} = \sum_{r=1}^{n}\ul{w}_{ir}\mb{x}_{r}^{0,s} - \alpha\mb{y}_{i}^{1,s}$} \Comment{state update}
\vspace{0.1cm}
\For{$t = 1,2,\cdots,q$}
\vspace{0.1cm}
\State{{\color{black}for~$l$ in~$\{1,\cdots,B\}$, choose~$\tau_{i,l}^{t,s}$ uniformly at random from~$\{1,\cdots,m\}$};\\
\Comment{sampling}}
\vspace{0.1cm}
\State{{\color{black}$\mb{v}_{i}^{t,s} = \frac{1}{B}\sum_{l=1}^B\Big(\gf_{i,\tau_{i,l}^{t,s}}\big(\x_{i}^{t,s}\big) - \gf_{i,\tau_{i,l}^{t,s}}\big(\x_{i}^{t-1,s}\big)\Big) + \mb{v}_{i}^{t-1,s};$}}\Comment{SARAH}
\vspace{0.1cm}
\State{$\mb{y}_{i}^{t+1,s} = \sum_{r=1}^{n}\ul{w}_{ir}\mb{y}_{r}^{t,s} + \mb{v}_i^{t,s} - \mb{v}_i^{t-1,s};$}
\Comment{gradient tracking}
\vspace{0.1cm}
\State{$\mb{x}_{i}^{t+1,s} = \sum_{r=1}^{n}\ul{w}_{ir}\mb{x}_{r}^{t,s} - \alpha\mb{y}_{i}^{t+1,s};$}
\Comment{state update}
\vspace{0.1cm}
\EndFor
\vspace{0.2cm}
\State{Set~$\x_i^{0,s+1} = \x_i^{q+1,s}$;~$\y_i^{0,s+1} = \y_i^{q+1,s}$;~$\v_i^{-1,s+1}=\v_i^{q,s}$.} 
\Comment{next cycle}
\vspace{0.2cm}
\EndFor
\end{algorithmic}
\end{algorithm}

\section{Main Results}\label{main_results}
In this section, we present the main convergence results of \texttt{GT-SARAH} and discuss their implications.

\subsection{Assumptions}
We make the following assumptions to establish the convergence properties of~\GS~in this paper.
\begin{assumption}\label{smooth}
Each local component cost~$f_{i,j}$ is differentiable and~$\{f_{i,j}\}_{j=1}^m$ satisfies a mean-squared smoothness property, i.e., for some~$L>0$,
\begin{align}\label{d_smooth}
{\color{black}\tfrac{1}{m}\tsum_{j=1}^m\|\nabla f_{i,j}(\mb{x})-\nabla f_{i,j}(\mb{y})\|^2\leq L^2\|\mb x - \mb y\|^2,\quad\forall i\in\mc{V},\quad\forall\mb{x},\mb{y}\in\mbb{R}^p.}     
\end{align}
In addition, the global cost~$F$ is bounded below, i.e.,~$F^* := \inf_{\x\in\R^p}F(\x) > -\infty$.
\end{assumption}
It is clear that under Assumption~\ref{smooth}, each~$f_i$ and~$F$ are~$L$-smooth. We note that Assumption~\ref{smooth} is weaker than requiring each~$f_{i,j}$ to be~$L$-smooth.

\begin{remark}\label{D_C_smooth}
{\color{black}The local mean-squared smoothness assumption~\eqref{d_smooth}, which is also used in the existing work~\cite{D_Get}, is slightly stronger than the smoothness assumption required by the existing lower bound $\Omega(N^{\sfrac{1}{2}}L\epsilon^{-2})$~\cite{SPIDER,lower_zhou} and the centralized near-optimal methods~\cite{sarah_ncvx,SPIDER,spiderboost} for finite-sum problems in the following sense. If we view Problem~\eqref{P} as a centralized optimization problem, that is, all~$f_{i,j}$'s are available at a single node, then the aforementioned lower bound and the convergence of the centralized near-optimal methods are established under the following assumption: 
\begin{align}\label{c_smooth}
\tfrac{1}{nm}\tsum_{i=1}^n\tsum_{j=1}^m\|\nabla f_{i,j}(\mb{x})-\nabla f_{i,j}(\mb{y})\|^2\leq L^2\|\mb x - \mb y\|^2,  \quad\forall\mb{x},\mb{y}\in\mbb{R}^p.
\end{align}
Clearly,~\eqref{c_smooth} is implied by~\eqref{d_smooth} but not vice versa. Due to this subtle difference, it is unclear whether the existing lower bound~$\Omega(N^{\sfrac{1}{2}}L\epsilon^{-2})$~\cite{SPIDER,lower_zhou} established under~\eqref{c_smooth} remains valid under~\eqref{d_smooth}. Finally, we note that a lower bound result for decentralized deterministic first-order algorithms in the case of~$m = 1$ can be found in~\cite{lower_sun}.}
\end{remark}

\begin{assumption}\label{sampling}
The family~$\big\{\tau_{i,l}^{t,s}: t\in[1,q], s\geq1, i\in\mc{V},l\in[1,B]\big\}$ of random variables is independent.
\end{assumption}
Assumption~\ref{sampling} is standard in the stochastic optimization literature, e.g.,~\cite{SPIDER,OPT_ML}.

{\color{black}\begin{assumption}\label{network}
The nonnegative weight matrix~$\ul{\mb W} := \{\ul{w}_{ir}\}\in\mathbb{R}^{n \times n}$ associated with the network~$\mc{G} = (\mc{V},\mc{E})$ has positive diagonals and is primitive. Moreover,~$\ul{\mb W}$ is doubly stochastic, i.e., $\ul{\mb W}\mb{1}_n = \mb{1}_n$ and $\mb{1}_n^\top \ul{\mb W} = \mb{1}_n^\top.$
\end{assumption}
An important consequence of Assumption~\ref{network}
is that~\cite{harnessing} 
\begin{align}\label{spectral}
\lambda := \left\|\ul{\mb{W}}-\tfrac{1}{n}\mb{1}_n\mb{1}_n^\top\right\| = \lambda_2(\ul{\mb W}) \in [0,1),
\end{align}
where~$\lambda_2(\ul{\mb W})$ denotes the second largest singular value of~$\ul{\mb W}$.\footnote{We note that the relation in~\eqref{spectral} may be established by following the definition of the spectral norm with the help of the primitivity and doubly stochasticity of~$\ul{\mb{W}}$ and~$\ul{\mb{W}}^{\top}\ul{\mb{W}}$, Perron-Frobenius theorem, and the spectral decomposition of~$\ul{\mb{W}}^{\top}\ul{\mb{W}}$~\cite{matrix_analysis,harnessing}.} We term~$(1-\lambda)$ as the spectral gap of~$\ul{\mb W}$ that characterizes the connectivity of the network~\cite{tutorial_nedich}. }

{\color{black}
\begin{remark}
Weight matrices satisfying Assumption~\ref{network} may be designed for the family of strongly-connected directed graphs that admit doubly-stochastic weights: (i) towards the primitivity requirement in Assumption~\ref{network}, we note that if a graph is strongly-connected, then its associated weight matrix~$\mb{\ul{W}}$ is irreducible~\cite[Theorem 6.2.14, 6.2.24]{matrix_analysis} and~$\mb{\ul{W}}$ is further primitive since it is nonnegative with positive diagonals~\cite[Lemma 8.5.4]{matrix_analysis}; (ii) towards the doubly stochastic requirement in Assumption~\ref{network}, we refer the readers to~\cite{DS_digraph} for necessary and sufficient conditions under which a strongly connected directed graph admits doubly stochastic weights. 

An important special case of this family is undirected connected graphs where doubly stochastic weights always exist and can be constructed in an efficient and decentralized manner, for instance, by the lazy Metroplis rule~\cite{tutorial_nedich}. Hence, Assumption~\ref{network} is \emph{more general} than the one required by \texttt{EXTRA}-based algorithms for decentralized optimization. For example, the weight matrix of~\D2 needs to be symmetric and meet certain spectral properties~\cite{D2} and is therefore not applicable to directed graphs.
\end{remark}}

In the rest of the paper, we fix a rich enough probability space~$\left(\Xi,\mc{F},\mbb{P}\right)$ where all random variables generated by \texttt{GT-SARAH} are properly defined. We formally state the convergence results of~\GS~next, the proofs of which are deferred to Subsection~\ref{Proof_main}.

\subsection{Asymptotic almost sure and mean-squared convergence} The following theorem shows the asymptotic convergence of \texttt{GT-SARAH}.
\begin{theorem} \label{asp}
Let Assumptions~\ref{smooth}-\ref{network} hold. Suppose that the step-size~$\a$, minibatch size~$B$, and the inner-loop length~$q$ of \textbf{\texttt{GT-SARAH}} follow~$$0<\alpha\leq  \min\bigg\{\frac{(1-\lambda^2)^2}{4\sqrt{42}},~\bigg(\frac{nB}{6q}\bigg)^{\sfrac{1}{2}},~\bigg(\frac{4nB}{7nB+24q}\bigg)^{\sfrac{1}{4}}\frac{1-\lambda^2}{6}\bigg\}\frac{1}{2L},$$ where~$B\in[1,m]$. Then we have:~$\forall t\in[0,q]$,~$\forall i\in\mc{V}$, 
\begin{align*}
&\mbb{P}\left(\lim_{s\rightarrow\infty}\big\|\nabla F(\x_i^{t,s})\big\| = 0\right) = 1 \qquad \mbox{and} \qquad
\lim_{s\rightarrow\infty}\E\Big[\big\|\nabla F(\x_i^{t,s})\big\|^2\Big] = 0,         \\
&{\color{black}\mbb{P}\left(\lim_{s\rightarrow\infty}\big\|\mb{x}_i^{t,s}-\ol{\mb{x}}^{t,s}\big\| = 0\right) = 1 \quad~ \mbox{and} \qquad
\lim_{s\rightarrow\infty}\E\Big[\big\|\mb{x}_i^{t,s}-\ol{\mb{x}}^{t,s}\big\|^2\Big] = 0,}
\end{align*}
where~$\ol{\mb{x}}^{t,s} := \frac{1}{n}\sum_{i=1}^n\mb{x}_i^{t,s}$.
\end{theorem}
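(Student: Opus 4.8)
The plan is to distill the dynamics of \GS\ into a single Lyapunov recursion over the outer-loop index $s$ that simultaneously controls the optimality gap, the consensus error, and the gradient-tracking error, and then to run a supermartingale-type argument on that recursion. The first step is to identify the network-average dynamics: stacking the per-node variables and using that $\ul{\mb W}$ is doubly stochastic (\cref{network}), one shows $\ol{\y}^{t,s}=\ol{\v}^{t-1,s}$ and hence that the averaged state obeys the clean recursion $\ol{\x}^{t+1,s}=\ol{\x}^{t,s}-\a\,\ol{\v}^{t,s}$, where $\ol{\v}^{t,s}:=\tfrac1n\tsum_{i=1}^n\v_i^{t,s}$. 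Applying the descent inequality for the $L$-smooth $F$ along the inner loop, telescoping over $t=0,\dots,q$ within outer loop $s$ (recall $\ol{\x}^{0,s+1}=\ol{\x}^{q+1,s}$), and splitting $\langle\gf(\ol{\x}^{t,s}),\ol{\v}^{t,s}\rangle$ first against $\tfrac1n\tsum_i\gf_i(\x_i^{t,s})$ and then against $\gf(\ol{\x}^{t,s})$, yields a bound of the schematic form
\[
F(\ol{\x}^{0,s+1})\le F(\ol{\x}^{0,s})-\tfrac{\a}{2}\tsum_{t=0}^{q}\big\|\gf(\ol{\x}^{t,s})\big\|^2+(\text{variance, consensus, and }\a^2\|\ol{\v}^{t,s}\|^2\text{ terms}),
\]
in which the error groups on the right are what the remaining ingredients must absorb.

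Next I would establish three auxiliary recursions. (i) A \SARAH\ variance bound: because the exact batch gradient $\v_i^{0,s}=\gf_i(\x_i^{0,s})$ is recomputed at the start of each outer loop, $\tfrac1n\tsum_i\E[\|\v_i^{t,s}-\gf_i(\x_i^{t,s})\|^2\mid\mc{F}^{0,s}]$ is controlled by $\tfrac{L^2}{B}$ times the accumulated increments $\tsum_{l=1}^{t}\tfrac1n\tsum_i\E[\|\x_i^{l,s}-\x_i^{l-1,s}\|^2\mid\mc{F}^{0,s}]$, which vanishes with the iterate increments. (ii) A consensus-error recursion that contracts $\tsum_i\|\x_i^{t+1,s}-\ol{\x}^{t+1,s}\|^2$ by the factor $\lambda^2<1$ from \eqref{spectral} and is driven by $\a^2$ times the tracker disagreement. (iii) A gradient-tracking-error recursion, likewise contracting by $\lambda^2$ and driven by the \SARAH\ increments $\v_i^{t,s}-\v_i^{t-1,s}$, hence by $\|\ol{\x}^{l,s}-\ol{\x}^{l-1,s}\|=\a\|\ol{\v}^{l-1,s}\|$ and the consensus error. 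Summing (ii)--(iii) over $t=1,\dots,q$ and using an elementary double-sum inequality to collapse the nested sums puts all three recursions into the same inner-loop-summed quantities that appear in the descent bound above.

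The core step is then to assemble these into a Lyapunov recursion. I would take $\mathbf{u}^s$ to be $F(\ol{\x}^{0,s})-F^*$ plus carefully chosen positive multiples of the consensus and gradient-tracking errors evaluated at $t=0$, and verify that, \emph{precisely} under the three-way step-size cap stated in the theorem, all cross terms can be dominated so that, $\P$-a.s.,
\[
\E\big[\mathbf{u}^{s+1}\mid\mc{F}^{0,s}\big]\le \mathbf{u}^s-\beta\tsum_{t=0}^{q}\E\big[\|\gf(\ol{\x}^{t,s})\|^2\mid\mc{F}^{0,s}\big]-\gamma\tsum_{t=0}^{q}\tfrac1n\tsum_{i=1}^{n}\E\big[\|\x_i^{t,s}-\ol{\x}^{t,s}\|^2\mid\mc{F}^{0,s}\big]
\]
for some constants $\beta,\gamma>0$, with $\mathbf{u}^s\ge0$ and $\mathbf{u}^1$ deterministic and finite; this is the same recursion that underlies the finite-horizon \cref{main}. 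Summing the total expectation of this inequality over $s=1,\dots,S$ and letting $S\to\infty$ gives $\tsum_{s\ge1}\tsum_{t=0}^{q}\E[\|\gf(\ol{\x}^{t,s})\|^2]<\infty$ and $\tsum_{s\ge1}\tsum_{t=0}^{q}\tfrac1n\tsum_i\E[\|\x_i^{t,s}-\ol{\x}^{t,s}\|^2]<\infty$. The mean-squared claims for the averages then follow because the terms of a convergent series tend to zero; the almost-sure claims follow because a nonnegative series with finite expected total is a.s.\ finite, so its terms tend to zero a.s.\ (and the supermartingale convergence theorem moreover shows $\mathbf{u}^s$ itself converges a.s.). Finally, the $L$-smoothness bound $\|\gf(\x_i^{t,s})\|\le\|\gf(\ol{\x}^{t,s})\|+L\|\x_i^{t,s}-\ol{\x}^{t,s}\|$ transfers both the a.s.\ and the mean-squared limits from the network average to every node $i$, yielding the four stated conclusions.

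The main obstacle is the assembly in the third step: forcing the four interlocking inequalities (descent, \SARAH\ variance, consensus, tracking) to close into a genuine supermartingale with strictly negative drift in \emph{both} $\|\gf(\ol{\x}^{t,s})\|^2$ and the consensus error requires choosing the Lyapunov coefficients simultaneously, and it is this feasibility computation that pins down the nonobvious constants (the $4\sqrt{42}$, the $nB/(6q)$, and the $(1-\lambda^2)/6$) in the step-size bound. By contrast, the averaging identity, the descent lemma, the \SARAH/consensus/tracking recursions, and the final probabilistic bookkeeping are routine once the recursion is in place.
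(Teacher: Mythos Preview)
Your plan is correct and uses the same ingredients as the paper (the averaging identity $\ol{\y}^{t+1,s}=\ol{\v}^{t,s}$, the descent lemma along $\ol{\x}$, the \SARAH\ variance bound, and the coupled consensus/tracking contractions), but the assembly is packaged differently. The paper works throughout in \emph{total} expectation and, rather than forming a scalar Lyapunov, treats the pair of consensus and gradient-tracking errors as a $2\times2$ linear time-invariant system in the vector $\big(\E[\|\x^{t,s}-\J\x^{t,s}\|^2],\,\E[\|\y^{t+1,s}-\J\y^{t+1,s}\|^2]\big)$; it verifies stability of the system matrix under the step-size cap, solves the LTI explicitly to bound the cumulative consensus error by the cumulative $\E[\|\ol{\v}^{t,s}\|^2]$, and substitutes this back into the telescoped descent inequality to obtain a single closed bound $\sum_{s\ge1}\sum_{t=0}^{q}\E\big[\|\nabla F(\x_i^{t,s})\|^2 + L^2\|\x_i^{t,s}-\ol{\x}^{t,s}\|^2\big]<\infty$. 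The almost-sure claims then follow by monotone convergence (swap $\E$ with $\sum$), not from a supermartingale theorem. Your conditional-supermartingale route would also close, but the one-step recursion needs care: the tracking error at the start of outer loop $s+1$ is perturbed by the batch-gradient reset and therefore depends on the \emph{entire} inner-loop history of loop $s$, not just on the boundary values at $t=0$, so the Lyapunov ``state'' must carry more than the three scalars you name. The paper's LTI route sidesteps this by accumulating over all $(t,s)$ at once, and it is this matrix computation (rather than a Lyapunov-coefficient feasibility search) that produces the specific constants $4\sqrt{42}$, $nB/(6q)$, and $(1-\lambda^2)/6$.
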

In addition to the mean-squared convergence that is standard in the stochastic optimization literature, the almost sure convergence in Theorem~\ref{asp} guarantees that all nodes in \texttt{GT-SARAH} asymptotically achieve consensus and converge to a first-order stationary point of~$F$ on almost every sample path. 

\subsection{Complexities of~\GS~for finding first-order stationary points}
We measure the outer-loop complexity of \texttt{GT-SARAH} in the following sense.
\begin{definition}\label{FSL}
Consider~the sequence of random state vectors~$\{\mb{x}_i^{t,s}\}$ generated by \texttt{GT-SARAH}, at each node~$i$. We say that \texttt{GT-SARAH} finds an~$\epsilon$-accurate first-order stationary point of~$F$ in~$S$ outer-loop iterations if
\begin{align}\label{FSL_ieq}
{\color{black}\frac{1}{S(q+1)}\sum_{s=1}^{S}\sum_{t=0}^{q}\frac{1}{n}\sum_{i=1}^n\E\Big[\big\|\nabla F\big(\x_i^{t,s}\big)\big\|^2
+ L^2\big\|\mb{x}_i^{t,s}-\ol{\mb{x}}^{t,s}\big\|^2\Big] \leq \e^2.}
\end{align}
\end{definition}
This is a standard metric that is concerned with the minimum of the stationary gaps and consensus errors over iterations in the mean-squared sense at each node~\cite{DSGD_nips,D2,sarah_ncvx,SPIDER,spiderboost}. {\color{black}In particular, if~\eqref{FSL_ieq} holds and the output~$\wh{\x}$ of~\GS~is chosen uniformly at random from the set~$\{\x_i^{t,s}: 0\leq t\leq q, 1\leq s\leq S, i\in\mc{V}\}$, then we have~$\E[\|\nabla F(\wh{\x})\|^2]\leq\epsilon^2$.} In the following, we first provide the outer-loop iteration complexity of~\GS.

\begin{theorem}\label{out}
Let Assumptions~\ref{smooth}-\ref{network} hold. Suppose that the step-size~$\a$, minibatch size~$B$, and the inner-loop length~$q$ of \textbf{\texttt{GT-SARAH}} follow~$$0<\alpha\leq\min\bigg\{\frac{(1-\lambda^2)^2}{4\sqrt{42}},~\left(\frac{nB}{6q}\right)^{\sfrac{1}{2}},~\left(\frac{4nB}{7nB+24q}\right)^{\sfrac{1}{3}}\frac{1-\lambda^2}{6}\bigg\}\frac{1}{2L},$$ where~$B\in[1,m]$. Then the number of the outer-loop iterations~$S$ required by \texttt{GT-SARAH} to find an~$\e$-accurate stationary point of~$F$ is at most 
$$\frac{1}{(q+1)\a L\epsilon^2}\Bigg(4L\left(F(\ol{\x}^{0,1}) - F^*\right) 
+ \frac{1}{n}\sum_{i=1}^n\left\|\nabla f_i(\ol{\x}^{0,1})\right\|^2\Bigg).$$
\end{theorem}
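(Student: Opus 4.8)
The plan is a Lyapunov-descent argument on the network-averaged iterate, absorbing all error terms into a potential that bundles the optimality gap with the gradient-tracking consensus errors, after which the bound follows by telescoping over both loops. \emph{Step 1 (mean dynamics and descent).} First I would stack the per-node quantities, $\x^{t,s} := [(\x_1^{t,s})^\top,\dots,(\x_n^{t,s})^\top]^\top$ and likewise $\y^{t,s},\v^{t,s}$, and set $\ol{\x}^{t,s} := \frac1n\sum_{i=1}^n\x_i^{t,s}$ and $\ol{\v}^{t,s} := \frac1n\sum_{i=1}^n\v_i^{t,s}$. Averaging the gradient-tracking and state updates in \cref{GT-SARAH}, using double stochasticity of $\ul{\mb{W}}$ and the initialization $\y_i^{0,1}=\bfzero$, $\v_i^{-1,1}=\bfzero$, an induction across both loops gives $\ol{\y}^{t,s}=\ol{\v}^{t-1,s}$, hence the exact recursion $\ol{\x}^{t+1,s}=\ol{\x}^{t,s}-\alpha\,\ol{\v}^{t,s}$. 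Since $F$ is $L$-smooth under \cref{smooth}, applying the descent lemma to this recursion and expanding $\langle\nabla F(\ol{\x}^{t,s}),\ol{\v}^{t,s}\rangle$ by the polarization identity yields a one-step bound
\begin{align*}
F(\ol{\x}^{t+1,s}) \le F(\ol{\x}^{t,s}) - \tfrac{\alpha}{2}\|\nabla F(\ol{\x}^{t,s})\|^2 - \tfrac{\alpha}{2}(1-L\alpha)\|\ol{\v}^{t,s}\|^2 + \tfrac{\alpha}{2}\big\|\ol{\v}^{t,s}-\nabla F(\ol{\x}^{t,s})\big\|^2,
\end{align*}
and I would split the last term via $\ol{\v}^{t,s}-\nabla F(\ol{\x}^{t,s}) = \frac1n\sum_i(\v_i^{t,s}-\nabla f_i(\x_i^{t,s})) + \frac1n\sum_i(\nabla f_i(\x_i^{t,s})-\nabla f_i(\ol{\x}^{t,s}))$ into a \SARAH~estimation error and a piece bounded by $\frac{L^2}{n}\sum_i\|\x_i^{t,s}-\ol{\x}^{t,s}\|^2$.

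\emph{Step 2 (auxiliary estimates).} Two families of bounds feed this inequality. For the \SARAH~error: conditioning on the natural filtration and using \cref{sampling}, $\v_i^{t,s}$ is conditionally unbiased for $\v_i^{t-1,s}+\nabla f_i(\x_i^{t,s})-\nabla f_i(\x_i^{t-1,s})$, so the error accumulates like a martingale whose per-step conditional second moment is at most $\frac{L^2}{B}\|\x_i^{t,s}-\x_i^{t-1,s}\|^2$ by the mean-squared smoothness in \cref{smooth}; because the error resets to $\bfzero$ at $t=0$ of every outer loop (where $\v_i^{0,s}=\nabla f_i(\x_i^{0,s})$ exactly), summation gives $\frac1n\sum_i\E\|\v_i^{t,s}-\nabla f_i(\x_i^{t,s})\|^2\le\frac{L^2}{nB}\sum_i\sum_{r=1}^t\E\|\x_i^{r,s}-\x_i^{r-1,s}\|^2$, and exploiting independence of the sampling noise across nodes sharpens the averaged error to $\E\big\|\frac1n\sum_i(\v_i^{t,s}-\nabla f_i(\x_i^{t,s}))\big\|^2\le\frac{L^2}{n^2B}\sum_i\sum_{r=1}^t\E\|\x_i^{r,s}-\x_i^{r-1,s}\|^2$ --- the extra $1/n$ here is what eventually produces the linear speedup (this is the content I would isolate as \cref{vrlem}). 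For the consensus errors: with $\lambda=\|\ul{\mb{W}}-\frac1n\mb{1}_n\mb{1}_n^\top\|$ from \eqref{spectral}, the tracking structure gives coupled recursions in which the $\x$-consensus error contracts by $\lambda^2$ up to an $\alpha^2$-multiple of the $\y$-consensus error, the $\y$-consensus error contracts by $\lambda^2$ up to a multiple of $\|\v^{t,s}-\v^{t-1,s}\|^2$, the latter is controlled in expectation by the state increments $\sum_i\|\x_i^{t,s}-\x_i^{t-1,s}\|^2$ plus lower-order \SARAH~noise, and the increments are in turn controlled by the $\x$-consensus error and $n\|\ol{\v}^{t-1,s}\|^2$; the outer-loop boundary, where $\v_i^{0,s}-\v_i^{-1,s}=\nabla f_i(\x_i^{0,s})-\v_i^{q,s-1}$, is handled on its own (the content of \cref{GT}, \cref{DS_sum1}, \cref{consensus_bound1}).

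\emph{Step 3 (Lyapunov assembly and telescoping).} I would then form a potential of the shape $\Phi^{t,s} := F(\ol{\x}^{t,s}) - F^* + c_1\|\x^{t,s}-\mb{1}_n\!\otimes\!\ol{\x}^{t,s}\|^2 + c_2\|\y^{t,s}-\mb{1}_n\!\otimes\!\ol{\y}^{t,s}\|^2 + (\text{a small accumulator of state increments})$ with $c_1,c_2>0$ tuned so that the three-way minimum on $\alpha$ in the statement makes every cross-coupling from Steps 1--2 strictly dominated --- roughly, the first term of the $\alpha$-bound activates the raw mixing contraction, the second tames the \SARAH~noise fed back through $\y$, and the fourth-root/cube-root term closes the $\x$--$\y$--$\v$ loop. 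The outcome is $\E[\Phi^{t+1,s}]\le\E[\Phi^{t,s}] - \tfrac{\alpha}{4}\E\big[\|\nabla F(\ol{\x}^{t,s})\|^2 + \tfrac{L^2}{n}\sum_i\|\x_i^{t,s}-\ol{\x}^{t,s}\|^2\big]$; summing over $t=0,\dots,q$ and $s=1,\dots,S$, the potential telescopes and all boundary terms cancel except at the very start, where $\x^{0,1}=\mb{1}_n\!\otimes\!\ol{\x}^{0,1}$ kills the $\x$-consensus term and the $\y$-term equals $c_2\sum_i\|\nabla f_i(\ol{\x}^{0,1})-\nabla F(\ol{\x}^{0,1})\|^2\le c_2\sum_i\|\nabla f_i(\ol{\x}^{0,1})\|^2$ (variance is at most the second moment). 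Dividing by $S(q+1)$, using $\|\nabla F(\x_i^{t,s})\|^2\le2\|\nabla F(\ol{\x}^{t,s})\|^2+2L^2\|\x_i^{t,s}-\ol{\x}^{t,s}\|^2$ to recover the exact left side of \eqref{FSL_ieq}, and rearranging, the metric is at most $\frac{1}{S(q+1)\alpha L}\big(4L(F(\ol{\x}^{0,1})-F^*)+\frac1n\sum_i\|\nabla f_i(\ol{\x}^{0,1})\|^2\big)$; requiring this to be at most $\epsilon^2$ gives exactly the stated bound on $S$.

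\emph{Main obstacle.} The delicate part will be the coupled consensus recursion in Step 2: closing the $\x$--$\y$--$\v$ loop uniformly across the inner steps \emph{and} across the outer-loop restart, while carrying the $1/B$ and $1/n$ gains from the \SARAH~variance through the $\y$-dynamics without degrading them --- it is exactly this that forces the specific three-term minimum on $\alpha$ and fixes the constants $c_1,c_2$; Steps 1 and 3 are then essentially bookkeeping.
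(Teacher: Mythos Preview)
Your Steps~1--2 are essentially the paper's Lemmas~\ref{GT}, \ref{DS_sum1}, \ref{vrlem}, and the consensus recursions underlying Lemma~\ref{consensus_bound1}, so the ingredients match. The divergence is in Step~3: you assemble a per-iteration Lyapunov function $\Phi^{t,s}$ and argue a one-step decrease, whereas the paper never builds such a potential. Instead, after telescoping the descent inequality once (Lemma~\ref{DS_sum1}), the paper bounds the \emph{summed} consensus error over all $(t,s)$ by writing the coupled $(\x,\y)$-consensus errors as a $2\times2$ LTI system $\mb u^{k}\leq\mb G\,\mb u^{k-1}+\mb d^k$, verifying $\rho(\mb G)<1$ under the first step-size condition, and computing $\sum_k\mb u^k\leq(\I_2-\mb G)^{-1}(\cdots)$ explicitly (Lemmas~\ref{consensus_bound1} and~\ref{F}); the outer-loop restart you flag as the main obstacle enters only as an additive $\mb H$-perturbation in this LTI and is absorbed after summing. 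Substituting the summed SARAH and consensus bounds back into Lemma~\ref{DS_sum1} and choosing $\alpha$ so the net coefficient of $\sum\|\ol{\v}^{t,s}\|^2$ is nonpositive gives Lemma~\ref{DS_sum2}; Theorem~\ref{out} then follows in two lines by dividing by $S(q+1)$ and using the cube-root condition on $\alpha$ to fold the initial-gradient term into $1/(\alpha L)$. Your Lyapunov route can also be made to work---it is the \texttt{D-GET} style the paper explicitly contrasts itself with---but the restart is more awkward there, since $\|\v^{0,s}-\v^{-1,s}\|^2=\|\nabla\mb f(\x^{0,s})-\v^{q,s-1}\|^2$ injects the \emph{entire} SARAH residue of inner loop $s-1$ into the $\y$-consensus, so a per-step potential must carry that residue forward rather than reset; the paper's sum-then-solve approach is what buys the explicit dependence on $(1-\lambda),n,B,q,L$ and the clean three-term minimum on $\alpha$.
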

With~\cref{out} at hand, the gradient and communication complexities of \texttt{GT-SARAH} can be readily established. 

{\color{black}\begin{theorem}\label{main}
Let Assumptions~\ref{smooth}-\ref{network} hold.
Suppose that the step-size~$\a$ and the length~$q$ of the inner loop of \texttt{GT-SARAH} are chosen as\footnote{The~$\mc{O}$ notation only hides universal constants that are independent of problem parameters.} 
\begin{align}\label{alpha_q}
q = \mc{O}\Big(\frac{m}{B}\Big)
\quad
\mbox{and} 
\quad
\a = \mc{O}\bigg(\min\bigg\{(1-\lambda)^2,~\frac{n^{\sfrac{1}{2}}B}{m^{\sfrac{1}{2}}},~\frac{n^{\sfrac{1}{3}}B^{\sfrac{2}{3}}(1-\lambda)}{m^{\sfrac{1}{3}}}\bigg\}~\dfrac{1}{L}\bigg),
\end{align}
where~$B\in[1,m]$. Then \texttt{GT-SARAH} finds an~$\e$-accurate stationary point of~$F$ in
\begin{align*}
\mc{H}_B := \mc{O}\bigg(\max\bigg\{\frac{nB}{(1-\lambda)^{2}},~N^{\sfrac{1}{2}},&~\frac{m^{\sfrac{1}{3}}n^{\sfrac{2}{3}}B^{\sfrac{1}{3}}}{1-\lambda}\bigg\}\dfrac{\Delta}{\e^2}\bigg)
\end{align*}
component gradient computations across all nodes and
\begin{align*}
\mc{K}_B := \mc{O}\bigg(\max\bigg\{\frac{1}{(1-\lambda)^{2}},~\frac{m^{\sfrac{1}{2}}}{n^{\sfrac{1}{2}}B},~\frac{m^{\sfrac{1}{3}}}{n^{\sfrac{1}{3}}B^{\sfrac{2}{3}}(1-\lambda)}\bigg\}\dfrac{\Delta}{\e^2}\bigg)
\end{align*}
rounds of communication, where~$\Delta := L\left(F\big(\ol{\x}^{0,1}\big) - F^*\right) 
+ \frac{1}{n}\sum_{i=1}^n\|\nabla f_i(\ol{\x}^{0,1})\|^2$.
\end{theorem}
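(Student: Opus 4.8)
The plan is to obtain \cref{main} as a direct corollary of \cref{out}, by (i) instantiating the free parameters $q$ and $\a$ as prescribed in \eqref{alpha_q}, (ii) checking that this instantiation is admissible for \cref{out}, and (iii) converting the resulting outer-loop iteration count $S$ into per-node gradient and communication counts through a straightforward bookkeeping of \cref{GT-SARAH}.

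First I would verify admissibility. Set $q = \Theta(m/B)$, which is $\geq 1$ since $B\in[1,m]$, and simplify the three-way minimum in the step-size condition of \cref{out}. Using $1-\lambda^2 = (1-\lambda)(1+\lambda) = \Theta(1-\lambda)$, the first term is $\Theta\!\big((1-\lambda)^2/L\big)$; substituting $q = \Theta(m/B)$ into $(nB/(6q))^{\sfrac{1}{2}}$ gives $\Theta\!\big(n^{\sfrac{1}{2}}B\,m^{-\sfrac{1}{2}}/L\big)$; and $\big(\tfrac{4nB}{7nB+24q}\big)^{\sfrac{1}{3}} = \Theta\!\big(\min\{1,(nB^2/m)^{\sfrac{1}{3}}\}\big) = \Theta\!\big(\min\{1,\,n^{\sfrac{1}{3}}B^{\sfrac{2}{3}}m^{-\sfrac{1}{3}}\}\big)$, so the third term is $\Theta\!\big(\min\{(1-\lambda)/L,\; n^{\sfrac{1}{3}}B^{\sfrac{2}{3}}(1-\lambda)\,m^{-\sfrac{1}{3}}/L\}\big)$. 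Since $(1-\lambda)^2\leq 1-\lambda$, the ``$(1-\lambda)/L$'' piece is dominated by the first term, and the overall bound collapses exactly to the three-way minimum in \eqref{alpha_q}. Hence, with universal constants chosen appropriately, the $\a$ and $q$ of \eqref{alpha_q} satisfy the hypotheses of \cref{out} (and $B\in[1,m]$).

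Next, \cref{out} gives that $S = \mc{O}\!\big(\Delta\,/((q+1)\a L\e^2)\big)$ outer loops suffice, after bounding $4L\big(F(\ol{\x}^{0,1})-F^*\big)+\tfrac1n\tsum_i\|\gf_i(\ol{\x}^{0,1})\|^2 \leq 4\Delta$. Now count gradients: in each outer loop, every node evaluates $m$ component gradients in the batch step (line~2) and $2B$ in each \SARAH~step (line~7), i.e.\ $m+2Bq$ per outer loop, which is $\Theta(m)$ when $q = \Theta(m/B)$. Multiplying by $n$ nodes and $S$ outer loops, and using $(q+1) = \Theta(m/B)$, yields $\mc{H}_B = \mc{O}\!\big(S\,n\,m\big) = \mc{O}\!\big(nB\,\Delta/(\a L\e^2)\big)$. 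Substituting $1/(\a L) = \Theta\!\big(\max\{(1-\lambda)^{-2},\, m^{\sfrac{1}{2}}n^{-\sfrac{1}{2}}B^{-1},\, m^{\sfrac{1}{3}}n^{-\sfrac{1}{3}}B^{-\sfrac{2}{3}}(1-\lambda)^{-1}\}\big)$ from \eqref{alpha_q} and distributing the factor $nB$ across the maximum produces the three terms $nB(1-\lambda)^{-2}$, $(nm)^{\sfrac{1}{2}} = N^{\sfrac{1}{2}}$, and $m^{\sfrac{1}{3}}n^{\sfrac{2}{3}}B^{\sfrac{1}{3}}(1-\lambda)^{-1}$, which is precisely $\mc{H}_B$. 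For communication, each outer loop uses $\mc{O}(1)$ mixing rounds in the setup (lines~3--4) and $\mc{O}(1)$ rounds per inner iteration for the gradient-tracking and state updates (lines~8--9), hence $\mc{O}(q+1)$ rounds per outer loop; therefore $\mc{K}_B = \mc{O}\!\big(S(q+1)\big) = \mc{O}\!\big(\Delta/(\a L\e^2)\big)$, and the same substitution for $1/(\a L)$ (now without the $nB$ factor) gives the stated $\mc{K}_B$.

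I expect the only genuinely delicate point to be the admissibility check in the first step — specifically handling the third term $\big(\tfrac{4nB}{7nB+24q}\big)^{\sfrac{1}{3}}$ of the step-size bound of \cref{out}: showing that with $q = \Theta(m/B)$ it reduces to $\Theta\!\big(\min\{1,\,n^{\sfrac{1}{3}}B^{\sfrac{2}{3}}m^{-\sfrac{1}{3}}\}\big)$ and that the regime $nB^2\gtrsim m$, in which this factor is merely $\Theta(1)$, is absorbed by the $(1-\lambda)^2$ term, so that one valid choice of $\a$ of the claimed order exists uniformly over $B\in[1,m]$. Everything else is arithmetic: the ratio $(m+2Bq)/(q+1) = \Theta(B)$, and the distribution of $nB$ (resp.\ $1$) over the three-way maximum for $1/(\a L)$.
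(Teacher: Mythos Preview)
Your proposal is correct and follows essentially the same route as the paper: invoke \cref{out} to get $S=\mc{O}\big(\Delta/((q+1)\a L\e^2)\big)$, count $n(m+2qB)$ component gradients and $\mc{O}(q)$ communication rounds per outer loop, set $q=\Theta(m/B)$ so that $(m+2qB)/(q+1)=\Theta(B)$, and finally substitute the largest admissible step-size from \cref{out}. Your admissibility check for $\a$ is more explicit than the paper's (which simply says ``set the step-size $\a$ as its upper bound in \cref{out}''), but the argument is identical in substance.
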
} 

\begin{remark}
\cref{main} holds for an arbitrary minibatch size~$B\in[1,m]$.
\end{remark}
\begin{remark}
The gradient complexity \emph{at each node} of \GS~is~$\mc{H}_{B}/n$.
\end{remark}

In view of~\cref{main}, as the minibatch size~$B$ increases, the gradient complexity $\H_B$ (resp. the communication complexity $\K_B$) of \texttt{GT-SARAH} is non-decreasing (resp. non-increasing). The following corollary may be obtained from~\cref{main} by standard algebraic manipulations and shows that choosing the minibatch size~$B$ appropriately leads to favorable computation and communication trade-offs.

{\color{black}
\begin{corollary}\label{main1}
Let Assumptions~\ref{smooth}-\ref{network} hold.
Suppose that the step-size~$\a$ and the inner-loop length~$q$ of \texttt{GT-SARAH} are chosen according to~\eqref{alpha_q}. We have the following complexity results.

$(i)$ If $B\in[1, \floor{R}]$, where~$R:= \max\b\{m^{\sfrac{1}{2}}n^{-\sfrac{1}{2}}(1-\lambda)^3,1\b\}$, then \texttt{GT-SARAH} attains the best possible, in the sense of~\cref{main}, gradient complexity
\begin{align}\label{H_g}
\H_{R} := \mc{O}\bigg(\max\bigg\{\frac{n}{(1-\lambda)^{2}},~N^{\sfrac{1}{2}},&~\frac{m^{\sfrac{1}{3}}n^{\sfrac{2}{3}}}{1-\lambda}\bigg\}\dfrac{\Delta}{\e^2}\bigg);    
\end{align}
moreover, when~$B = \floor{R}$, the corresponding communication complexity of \texttt{GT-SARAH} is 
\begin{align}\label{K_g}
\K_{R} := 
\mc{O}\bigg(\!\max\bigg\{\frac{1}{(1-\lambda)^{2}}, \min\bigg\{\frac{m^{\sfrac{1}{2}}}{n^{\sfrac{1}{2}}},\frac{1}{(1-\lambda)^{3}}\bigg\}, \min\bigg\{\frac{m^{\sfrac{1}{3}}}{n^{\sfrac{1}{3}}(1-\lambda)},\frac{1}{(1-\lambda)^{3}}\bigg\}\bigg\}\dfrac{\Delta}{\e^2}\bigg).
\end{align}

$(ii)$ If $B\in[\ceil{C}, m]$, where~$C := \max\b\{m^{\sfrac{1}{2}}n^{-\sfrac{1}{2}}(1-\lambda)^{\sfrac{3}{2}},1\b\}$, then \texttt{GT-SARAH}~attains the best possible, in the sense of~\cref{main}, communication complexity
\begin{align}\label{K_c}
\K_{C} := \mc{O}\bigg(\frac{1}{(1-\lambda)^2}\frac{\Delta}{\epsilon^2}\bigg);    
\end{align}
moreover, when~$B = \ceil{C}$, the corresponding gradient complexity of~\GS~is
\begin{align}\label{H_c}
\H_{C} := \mc{O}\bigg(\max\bigg\{\frac{n}{(1-\lambda)^{2}},~\frac{N^{\sfrac{1}{2}}}{(1-\lambda)^{\sfrac{1}{2}}},&~\frac{m^{\sfrac{1}{3}}n^{\sfrac{2}{3}}}{1-\lambda}\bigg\}\dfrac{\Delta}{\e^2}\bigg).    
\end{align}
\end{corollary}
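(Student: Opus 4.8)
The plan is to derive~\cref{main1} directly from the closed-form complexities $\H_B$ and $\K_B$ of~\cref{main} by optimizing over the free minibatch size $B\in[1,m]$. Recall, as already noted before the corollary, that $\H_B$ is non-decreasing in $B$ (its only $B$-dependent terms, $nB(1-\lambda)^{-2}$ and $m^{\sfrac{1}{3}}n^{\sfrac{2}{3}}B^{\sfrac{1}{3}}(1-\lambda)^{-1}$, both grow with $B$), while $\K_B$ is non-increasing in $B$; hence the smallest attainable gradient complexity corresponds to the smallest admissible $B$ and the smallest attainable communication complexity to the largest admissible $B$. Throughout, one first checks that $\floor{R},\ceil{C}\in[1,m]$ (so that~\cref{main} applies for these choices) and that $\floor{R}=\Theta(R)$ and $\ceil{C}=\Theta(C)$ whenever $R\ge1$, respectively $C\ge1$, which legitimizes the order substitutions below.

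For part~(i), I would show that for every $B\in[1,\floor{R}]$ the two $B$-dependent terms of $\H_B$ are dominated by $N^{\sfrac{1}{2}}$. When $R=m^{\sfrac{1}{2}}n^{-\sfrac{1}{2}}(1-\lambda)^3>1$, substituting $B\le m^{\sfrac{1}{2}}n^{-\sfrac{1}{2}}(1-\lambda)^3$ gives $nB(1-\lambda)^{-2}\le m^{\sfrac{1}{2}}n^{\sfrac{1}{2}}(1-\lambda)\le N^{\sfrac{1}{2}}$ and $m^{\sfrac{1}{3}}n^{\sfrac{2}{3}}B^{\sfrac{1}{3}}(1-\lambda)^{-1}\le m^{\sfrac{1}{2}}n^{\sfrac{1}{2}}=N^{\sfrac{1}{2}}$; moreover $R>1$ also forces the $B$-free terms $n(1-\lambda)^{-2}$ and $m^{\sfrac{1}{3}}n^{\sfrac{2}{3}}(1-\lambda)^{-1}$ to be at most $N^{\sfrac{1}{2}}$, so that $\H_B=\mc{O}(N^{\sfrac{1}{2}}\Delta\e^{-2})$, which coincides with the right-hand side of~\eqref{H_g} in this regime. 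When instead $R=1$, the admissible range is the singleton $B=1$ and $\H_1$ is exactly~\eqref{H_g}. In either case $B=\floor{R}\in[1,\floor{R}]$, which yields the gradient-complexity claim. For the communication complexity I would substitute $B=\floor{R}$ into $\K_B$: when $R>1$, $\floor{R}=\Theta(R)$ gives $m^{\sfrac{1}{2}}n^{-\sfrac{1}{2}}B^{-1}=\Theta((1-\lambda)^{-3})$ and $m^{\sfrac{1}{3}}n^{-\sfrac{1}{3}}B^{-\sfrac{2}{3}}(1-\lambda)^{-1}=\Theta((1-\lambda)^{-3})$, whereas when $R=1$, $B=1$ leaves $m^{\sfrac{1}{2}}n^{-\sfrac{1}{2}}$ and $m^{\sfrac{1}{3}}n^{-\sfrac{1}{3}}(1-\lambda)^{-1}$; since $R>1$ is precisely the condition under which $\min\{m^{\sfrac{1}{2}}n^{-\sfrac{1}{2}},(1-\lambda)^{-3}\}=(1-\lambda)^{-3}$ and $\min\{m^{\sfrac{1}{3}}n^{-\sfrac{1}{3}}(1-\lambda)^{-1},(1-\lambda)^{-3}\}=(1-\lambda)^{-3}$, both cases are captured uniformly by~\eqref{K_g}.

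Part~(ii) is the mirror image, optimizing $\K_B$ instead. I would show that for every $B\in[\ceil{C},m]$ the $B$-dependent terms of $\K_B$ are $\mc{O}((1-\lambda)^{-2})$: substituting $B\ge m^{\sfrac{1}{2}}n^{-\sfrac{1}{2}}(1-\lambda)^{\sfrac{3}{2}}$ gives $m^{\sfrac{1}{2}}n^{-\sfrac{1}{2}}B^{-1}\le(1-\lambda)^{-\sfrac{3}{2}}\le(1-\lambda)^{-2}$ and $m^{\sfrac{1}{3}}n^{-\sfrac{1}{3}}B^{-\sfrac{2}{3}}(1-\lambda)^{-1}\le(1-\lambda)^{-2}$, hence $\K_B=\mc{O}((1-\lambda)^{-2}\Delta\e^{-2})=\K_C$ as in~\eqref{K_c} (which is also the bound in the degenerate case $C=1$). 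Then, taking $B=\ceil{C}$ in $\H_B$: if $C>1$, $\ceil{C}=\Theta(C)$ gives $nB(1-\lambda)^{-2}=\Theta(N^{\sfrac{1}{2}}(1-\lambda)^{-\sfrac{1}{2}})$ and $m^{\sfrac{1}{3}}n^{\sfrac{2}{3}}B^{\sfrac{1}{3}}(1-\lambda)^{-1}=\Theta(N^{\sfrac{1}{2}}(1-\lambda)^{-\sfrac{1}{2}})$, while if $C=1$, $B=1$ leaves the three terms $n(1-\lambda)^{-2}$, $N^{\sfrac{1}{2}}$, and $m^{\sfrac{1}{3}}n^{\sfrac{2}{3}}(1-\lambda)^{-1}$; bounding $N^{\sfrac{1}{2}}\le N^{\sfrac{1}{2}}(1-\lambda)^{-\sfrac{1}{2}}$ and collecting terms, both cases are dominated by~\eqref{H_c}.

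The one point that requires genuine care is the bookkeeping of whether the truncation $\max\{\,\cdot\,,1\}$ defining $R$ and $C$ is active, together with the elementary facts $\floor{R}=\Theta(R)$ and $\ceil{C}=\Theta(C)$ used to push the $\Theta(\cdot)$ substitutions through; beyond that, every step reduces to comparing monomials in $n$, $m$, and $(1-\lambda)$, and no new analysis is needed since all the substantive estimates are already furnished by~\cref{main}.
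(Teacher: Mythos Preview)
Your proposal is correct and matches the paper's approach: the paper states only that the corollary ``may be obtained from~\cref{main} by standard algebraic manipulations,'' and your argument---substituting the endpoint values $B=\floor{R}$ and $B=\ceil{C}$ into $\H_B$ and $\K_B$, using the monotonicity of these bounds in $B$, and handling the two cases $R>1$ versus $R=1$ (respectively $C>1$ versus $C=1$) separately---is precisely that routine computation carried out in full.
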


Comparing~\eqref{H_g}~\eqref{K_g} with~\eqref{H_c}~\eqref{K_c}, we clearly have~$\H_{R}\leq\H_{C}$ and~$\K_{R}\geq\K_{C}$.

\subsubsection{Two regimes of practical significance}\label{two_regimes}
{\color{black}We now discuss the implications of the complexity results in~\cref{main1} and the corresponding computation-communication trade-offs in the following regimes of practical significance.

\textbf{$\bullet$ Big-data regime:}~${n = \mc{O}(N^{\sfrac{1}{2}}(1-\lambda)^{3})}$. In this regime, typical to large-scale machine learning, i.e., the total number of data samples~$N$ is very large, it can be verified that $\H_{R}$ reduces to $\wt{\H}_{R}: = \mc O(N^{\sfrac{1}{2}}\Delta\e^{-2})$ and $\K_{R}$ reduces to~$\wt{\K}_{R} := \mc{O}((1-\lambda)^{-3}\Delta\e^{-2})$.
It is worth noting that $\wt{\H}_{R}$ is independent of the network topology and matches the gradient complexity of the centralized near-optimal variance-reduced methods \cite{sarah_ncvx,spiderboost,SPIDER} for this problem class up to constant factors, under a slightly stronger smoothness assumption; see~\cref{D_C_smooth}. Moreover,~$\wt{\H}_{R}$ demonstrates a non-asymptotic linear speedup in that the number of component gradient computations required \textit{at each node} to achieve an~$\epsilon$-accurate stationary point of~$F$ is reduced by a factor of~$1/n$, compared to the aforementioned centralized near-optimal algorithms \cite{SPIDER,spiderboost,sarah_ncvx} that perform all gradient computations at a single node. 

On the other hand, it is straightforward to verify that~$\H_{C}$ reduces to~$\wt{\H}_{C} := \mc
O(N^{\sfrac{1}{2}}(1-\lambda)^{-\sfrac{1}{2}}\Delta\e^{-2})$. In other words, in this big-data regime, choosing a large minibatch size~$B = \ceil{C}$ improves the communication complexity from~$\wt{\K}_{R}$ to~$\K_{C}$ while deteriorates the gradient complexity from~$\wt{\H}_{R}$ to~$\wt{\H}_{C}$, demonstrating an interesting trade-off between computation and communication.

\textbf{$\bullet$ Large-scale network regime:}~${n=\Omega(N^{\sfrac{1}{2}}(1-\lambda)^{\sfrac{3}{2}})}$. In this regime, typical to ad hoc IoT networks, i.e., the number of the nodes~$n$ and the network spectral gap inverse~$(1-\lambda)^{-1}$ are large compared with the total number of samples~$N$, it can be verified that~$R = C = 1$ and consequently~$\H_{R} = \H_{C}$ reduce to $\mc{O}(n(1-\lambda)^{-2}\Delta\e^{-2})$ while~$\K_{R} = \K_{C}$ reduce to $\mc{O}((1-\lambda)^{-2}\Delta\e^{-2})$. In other words, in this large-scale network regime, the minibatch size~$B = \mc{O}(1)$ is preferred since it attains the best possible gradient and communication complexity simultaneously, in the sense of \cref{main}. }}

\begin{remark}[Characterization of the big-data regime] We note that the number of nodes~$n$ may be interpreted as the intrinsic minibatch size of \texttt{GT-SARAH}. We recall that the centralized near-optimal variance-reduced algorithms~\cite{SPIDER,spiderboost,sarah_ncvx} for this problem class retain their best possible gradient complexity if their minibatch size does not exceed~$N^{\sfrac{1}{2}}$~\cite{sarah_ncvx}. Thus, the aforementioned big-data regime~$n =  \mc{O}(N^{\sfrac{1}{2}}(1-\lambda)^3)$ approaches the centralized one as the network connectivity improves and matches the centralized one when the network is fully connected, i.e.,~$\lambda=0$.
\end{remark}

\section{Convergence Analysis}\label{conv_analysis}
In this section, we present the proof pipeline for Theorems~\ref{asp},~\ref{out}, and~\ref{main}. The analysis framework is novel and general and may be applied to other decentralized algorithms built around variance reduction and gradient tracking.
To proceed, we first write~\GS~in a matrix form. Recall that~\GS~is a double loop method, where the outer loop index is~$s\in\{1,\ldots,S\}$ and the inner loop index is~$t\in\{0,\ldots,q\}$. It is straightforward to verify that~\GS~can be equivalently written as:~$\forall s\geq1$ and~$t\in[0,q]$,
\begin{subequations}
\begin{align}
\mb{y}^{t+1,s} &= \mb{W}\mb{y}^{t,s} + \mb{v}^{t,s} - \mb{v}^{t-1,s}, \label{gtsarah1} \\
\mb{x}^{t+1,s} &= \mb{W}\mb{x}^{t,s} - \alpha\mb{y}^{t+1,s},  \label{gtsarah2} 
\end{align}
\end{subequations}
where~$\mb v^{t,s}, \mb{x}^{t,s},$ and~$\mb y^{t,s}$, in~$\mathbb{R}^{np}$, that concatenate local gradient estimators~${\{\mb{v}_i^{t,s}\}_{i=1}^n}$, states~${\{\mb{x}_i^{t,s}\}_{i=1}^n}$, and gradient trackers~${\{\mb{y}_i^{t,s}\}_{i=1}^n}$, respectively, and~${\mb{W} := \ul{\mb{W}}\otimes \I_p}$. 
{\color{black}We recall that~$\x^{0,s+1} = \x^{q+1,s}$,~$\y^{0,s+1} = \y^{q+1,s}$,~$\v^{-1,s+1}=\v^{q,s},\forall s\geq1$, and~$\v^{-1,1} = \mb{0}_{np}$ from~\cref{GT-SARAH} under the vector notation.}
Under Assumption~\ref{network}, we have~\cite{matrix_analysis}$$ \mb{J} := \lim_{k\ra\infty}\mb{W}^k = \Big(\tfrac{1}{n}\mb{1}_n\mb{1}_n^\top\Big)\otimes \I_p,$$ i.e., the power limit of the network weight matrix~$\mb{W}$ is the exact averaging matrix~$\mb{J}$. 
We also introduce the following notation for convenience:
\begin{align*}
&\nabla\mb{f}(\mb{x}^{t,s}) := \left[\nabla f_1(\mb{x}_1^{t,s})^\top,\cdots,\nabla f_n(\mb{x}_n^{t,s})^\top\right]^\top, \quad \ol{\nabla\mb{f}}(\mb{x}^{t,s}) := \tfrac{1}{n}(\mb{1}_n^\top\otimes \I_p)\nabla\mb{f}(\mb{x}^{t,s}), \\
&\ol{\mb x}^{t,s} := \tfrac{1}{n}(\mb{1}_n^\top\otimes \I_p)\mb{x}^{t,s}, \qquad \ol{\mb y}^{t,s} = \tfrac{1}{n}(\mb{1}_n^\top\otimes \I_p)\mb{y}^{t,s},  \qquad 
\ol{\mb v}^{t,s} := \tfrac{1}{n}(\mb{1}_n^\top\otimes \I_p)\mb{v}^{t,s}.
\end{align*}
{\color{black}In particular, we note that~$\|\nabla\mb{f}(\mb{x}^{0,1})\|^2 := \sum_{i=1}^n\|\nabla f_i(\ol{\x}^{0,1})\|^2$.}
In the rest of the paper, we assume that~Assumptions~\ref{smooth},~\ref{sampling}, and~\ref{network} hold without explicitly stating them. 

\subsection{Auxiliary relationships}
First, as a consequence of the gradient tracking update~\eqref{gtsarah2}, it is straightforward to show by induction the following result.
\begin{lemma}\label{GT}
$\ol{\mb y}^{t+1,s} = \ol{\v}^{t,s}, \forall s\geq1$ and~$t\in[0,q]$.
\end{lemma}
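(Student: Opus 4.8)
The plan is to prove the identity $\ol{\mb y}^{t+1,s} = \ol{\v}^{t,s}$ by a double induction that respects the double-loop structure of \texttt{GT-SARAH}: an outer induction on $s\geq 1$ and, within each fixed $s$, an inner induction on $t\in[0,q]$. The key structural fact I would use repeatedly is that $\mb{J}$ is the averaging operator, so applying $\tfrac1n(\mb1_n^\top\otimes\I_p)$ to any vector in $\R^{np}$ is the same as left-multiplying by $\mb{J}$ up to the obvious identification, and crucially that $\tfrac1n(\mb1_n^\top\otimes\I_p)\mb{W} = \tfrac1n(\mb1_n^\top\otimes\I_p)$ because $\ul{\mb W}$ is column-stochastic ($\mb1_n^\top\ul{\mb W} = \mb1_n^\top$ from Assumption~\ref{network}). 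Thus averaging the gradient-tracking recursion~\eqref{gtsarah1} kills the $\mb{W}\mb y^{t,s}$ term and turns it into $\ol{\mb y}^{t,s}$:
\[
\ol{\mb y}^{t+1,s} = \ol{\mb y}^{t,s} + \ol{\v}^{t,s} - \ol{\v}^{t-1,s}.
\]
This telescoping relation is the engine of the whole argument.

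First I would handle the base case. At $s=1$ we have the initializations $\mb y^{0,1} = \mb0_{np}$ and $\mb v^{-1,1} = \mb0_{np}$ (line 1 of \cref{GT-SARAH}, in vector form), so $\ol{\mb y}^{0,1} = \ol{\v}^{-1,1} = \mb0_p$. Feeding this into the averaged recursion with $t=0$ gives $\ol{\mb y}^{1,1} = \ol{\mb y}^{0,1} + \ol{\v}^{0,1} - \ol{\v}^{-1,1} = \ol{\v}^{0,1}$, which is the claim for $t=0$. For the inner induction step, assume $\ol{\mb y}^{t,s} = \ol{\v}^{t-1,s}$ for some $t\in[0,q]$; then the averaged recursion immediately yields $\ol{\mb y}^{t+1,s} = \ol{\v}^{t-1,s} + \ol{\v}^{t,s} - \ol{\v}^{t-1,s} = \ol{\v}^{t,s}$, closing the inner induction for all $t\in[0,q]$ at this $s$.

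Next I would propagate across outer iterations. The cycle-update rules $\mb y^{0,s+1} = \mb y^{q+1,s}$ and $\mb v^{-1,s+1} = \mb v^{q,s}$ (last line of \cref{GT-SARAH}) give $\ol{\mb y}^{0,s+1} = \ol{\mb y}^{q+1,s}$ and $\ol{\v}^{-1,s+1} = \ol{\v}^{q,s}$. By the inner induction just completed at level $s$ (applied at $t=q$), $\ol{\mb y}^{q+1,s} = \ol{\v}^{q,s}$, hence $\ol{\mb y}^{0,s+1} = \ol{\v}^{-1,s+1}$ — exactly the "matching" relation I needed at $s=1$ — so the base case of the inner induction at level $s+1$ ($t=0$: $\ol{\mb y}^{1,s+1} = \ol{\v}^{0,s+1}$) follows from the averaged recursion, and the inner induction runs again. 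This closes the outer induction. I do not anticipate a genuine obstacle here; the only thing to be careful about is bookkeeping the index shifts correctly across the cycle boundary (that $\ol{\mb y}^{0,s+1}$ pairs with $\ol{\v}^{-1,s+1}$, not with $\ol{\v}^{0,s+1}$), and making sure the averaging-annihilates-$\mb W$ identity is invoked from column-stochasticity of $\ul{\mb W}$, which is exactly what Assumption~\ref{network} supplies.
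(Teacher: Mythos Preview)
Your proposal is correct and takes essentially the same approach as the paper: both use column-stochasticity of $\ul{\mb W}$ to reduce~\eqref{gtsarah1} to the averaged telescoping recursion $\ol{\mb y}^{t+1,s} = \ol{\mb y}^{t,s} + \ol{\v}^{t,s} - \ol{\v}^{t-1,s}$, then unroll it through the inner and outer loops back to the initialization $\ol{\mb y}^{0,1} = \ol{\v}^{-1,1} = \mb 0_p$. The only cosmetic difference is that the paper presents this as a single chain of equalities (telescoping directly across both loops), whereas you organize it as an explicit double induction; the content is identical.
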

\begin{proof}
See~\cref{proof_GT}.
\end{proof}
The above lemma states that the average of gradient trackers preserves the average of local gradient estimators. Under~Assumption~\ref{network}, we obtain that the weight matrix~$\mb{W}$ is a contraction operator~\cite{harnessing}.
\begin{lemma}\label{W}
$\|\mb{W}\mb{x}-\mb{J}\mb{x}\|\leq\lambda\|\mb{x}-\mb{J}\mb{x}\|$,~$\forall\mb{x}\in\mbb{R}^{np}$, for~$\lambda\in[0,1)$ defined in~\eqref{spectral}.
\end{lemma}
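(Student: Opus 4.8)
The plan is to prove the contraction bound $\|\mathbf{W}\mathbf{x} - \mathbf{J}\mathbf{x}\| \le \lambda\|\mathbf{x} - \mathbf{J}\mathbf{x}\|$ by exploiting the key algebraic identity $\mathbf{W}\mathbf{J} = \mathbf{J}\mathbf{W} = \mathbf{J}$, which follows immediately from the double stochasticity of $\underline{\mathbf{W}}$ (Assumption~\ref{network}) together with the fact that $\mathbf{J} = (\tfrac{1}{n}\mathbf{1}_n\mathbf{1}_n^\top)\otimes\mathbf{I}_p$ and $\mathbf{W} = \underline{\mathbf{W}}\otimes\mathbf{I}_p$; indeed $(\tfrac{1}{n}\mathbf{1}_n\mathbf{1}_n^\top)\underline{\mathbf{W}} = \tfrac{1}{n}\mathbf{1}_n(\underline{\mathbf{W}}^\top\mathbf{1}_n)^\top = \tfrac{1}{n}\mathbf{1}_n\mathbf{1}_n^\top$ and symmetrically on the other side, and Kronecker products respect this. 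Also $\mathbf{J}\mathbf{J} = \mathbf{J}$.

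First I would write $\mathbf{W}\mathbf{x} - \mathbf{J}\mathbf{x} = \mathbf{W}\mathbf{x} - \mathbf{J}\mathbf{x}$ and insert the identity to get $\mathbf{W}\mathbf{x} - \mathbf{J}\mathbf{x} = \mathbf{W}\mathbf{x} - \mathbf{W}\mathbf{J}\mathbf{x} = \mathbf{W}(\mathbf{x} - \mathbf{J}\mathbf{x}) = (\mathbf{W} - \mathbf{J})(\mathbf{x} - \mathbf{J}\mathbf{x})$, where in the last step I use $\mathbf{J}(\mathbf{x} - \mathbf{J}\mathbf{x}) = \mathbf{J}\mathbf{x} - \mathbf{J}\mathbf{x} = \mathbf{0}$. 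Taking norms and using submultiplicativity of the spectral norm yields $\|\mathbf{W}\mathbf{x} - \mathbf{J}\mathbf{x}\| \le \|\mathbf{W} - \mathbf{J}\|\,\|\mathbf{x} - \mathbf{J}\mathbf{x}\|$. It then remains to identify $\|\mathbf{W} - \mathbf{J}\| = \|(\underline{\mathbf{W}} - \tfrac{1}{n}\mathbf{1}_n\mathbf{1}_n^\top)\otimes\mathbf{I}_p\| = \|\underline{\mathbf{W}} - \tfrac{1}{n}\mathbf{1}_n\mathbf{1}_n^\top\|$, since the spectral norm of a Kronecker product with an identity equals the spectral norm of the other factor. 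By the definition of $\lambda$ in~\eqref{spectral}, this is exactly $\lambda \in [0,1)$, completing the argument.

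Since this proof is entirely routine, there is no real obstacle; the only mild subtlety worth a sentence is justifying $\|\mathbf{A}\otimes\mathbf{I}_p\| = \|\mathbf{A}\|$, which follows from the general fact that the singular values of $\mathbf{A}\otimes\mathbf{B}$ are the products of singular values of $\mathbf{A}$ and $\mathbf{B}$, so $\mathbf{B} = \mathbf{I}_p$ leaves the singular values of $\mathbf{A}$ unchanged. One could alternatively just cite~\cite{harnessing} or \cite{matrix_analysis} for the whole contraction statement, as the lemma statement already does. I would present the short computation for completeness and defer the detailed verification to an appendix pointer (e.g.\ ``See~\cref{proof_W}'') consistent with the style used for Lemma~\ref{GT}.
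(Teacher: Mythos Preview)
Your proof is correct and complete. The paper itself does not give a proof of this lemma at all; it simply states the contraction property and cites~\cite{harnessing}, remarking that Lemmas~\ref{GT} and~\ref{W} are standard in the decentralized optimization and gradient tracking literature. Your short computation via $\mb{W}\x-\mb{J}\x=(\mb{W}-\mb{J})(\x-\mb{J}\x)$ together with $\|(\ul{\mb{W}}-\tfrac{1}{n}\mb{1}_n\mb{1}_n^\top)\otimes\mb{I}_p\|=\|\ul{\mb{W}}-\tfrac{1}{n}\mb{1}_n\mb{1}_n^\top\|=\lambda$ is exactly the standard argument behind that citation, so there is nothing to add.
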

Lemmas~\ref{GT} and~\ref{W} are standard in decentralized optimization and gradient tracking~\cite{harnessing,DIGing}.
The~$L$-smoothness of~$F$ leads to the following quadratic upper bound~\cite{nesterov_book}:
\begin{equation}\label{DL1}
F(\mb y) \leq F(\mb x) + \langle \nabla F(\mb x), \mb{y} - \mb{x}\rangle
+ \tfrac{L}{2}\left\|\mb y - \mb x\right\|^2, \qquad\forall\mb{x},\mb{y}\in\mbb{R}^p.
\end{equation}
Consequently, the following descent type lemma on the iterates generated by \texttt{GT-SARAH} may be established by setting~$\y=\ol{\x}^{t+1,s}$ and~$\x=\ol{\x}^{t,s}$ in~\eqref{DL1} and taking a telescoping sum across all iterations of \texttt{GT-SARAH} with the help of Lemmas~\ref{GT} and~the~$L$-smoothness of each~$f_i$.
\begin{lemma}\label{DS_sum1}
If the step-size follows that~$0<\alpha\leq\frac{1}{2L}$, then we have:
\begin{align*}
\E\big[F\big(\ol{\x}^{q+1,S}\big)\big] \leq&~F\big(\ol{\x}^{0,1}\big) - \frac{\a}{2}\sum_{s=1}^S\sum_{t=0}^{q}\E\Big[\big\|\nabla F(\ol{\x}^{t,s})\big\|^2\Big]
- \frac{\a}{4}\sum_{s=1}^S\sum_{t=0}^{q}\E\big[\|\ol{\v}^{t,s}\|^2\big] \\
&+ \a\sum_{s=1}^S\sum_{t=0}^{q}\E\Big[\big\|\ol{\v}^{t,s}\!-\ol{\nabla\mb{f}}(\x^{t,s})\big\|^2\Big] + \a L^2\sum_{s=1}^S\sum_{t=0}^{q}{\E\bigg[\frac{\|\mb{x}^{t,s}-\mb{J}\mb{x}^{t,s}\|^2}{n}\bigg]}.
\end{align*}
\end{lemma}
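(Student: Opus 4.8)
# Proof Proposal for Lemma~\ref{DS_sum1}

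\textbf{Overall strategy.} The plan is to apply the $L$-smoothness descent inequality~\eqref{DL1} to the averaged iterates, using the update rule $\ol{\x}^{t+1,s} = \ol{\x}^{t,s} - \alpha \ol{\y}^{t+1,s} = \ol{\x}^{t,s} - \alpha\ol{\v}^{t,s}$, where the second equality is exactly Lemma~\ref{GT}. This reduces the problem to a single-step descent estimate which is then summed (telescoped) over all inner iterations $t\in[0,q]$ and all outer iterations $s\in[1,S]$, using the cycle-boundary conditions $\ol{\x}^{0,s+1} = \ol{\x}^{q+1,s}$. The crux is controlling the cross term $\langle \nabla F(\ol{\x}^{t,s}), -\alpha\ol{\v}^{t,s}\rangle$, which I would split using the identity $\langle a, b\rangle = \tfrac12\|a\|^2 + \tfrac12\|b\|^2 - \tfrac12\|a-b\|^2$ (or its variants) to extract a $-\tfrac{\alpha}{2}\|\nabla F(\ol{\x}^{t,s})\|^2$ term, a negative multiple of $\|\ol{\v}^{t,s}\|^2$, and an error term involving $\|\ol{\v}^{t,s} - \nabla F(\ol{\x}^{t,s})\|^2$.

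\textbf{Key steps in order.} First, substitute $\y = \ol{\x}^{t+1,s}$, $\x = \ol{\x}^{t,s}$ into~\eqref{DL1} to obtain
$F(\ol{\x}^{t+1,s}) \leq F(\ol{\x}^{t,s}) - \alpha\langle\nabla F(\ol{\x}^{t,s}),\ol{\v}^{t,s}\rangle + \tfrac{L\alpha^2}{2}\|\ol{\v}^{t,s}\|^2$.
Second, handle the inner product: write $-\langle\nabla F(\ol{\x}^{t,s}),\ol{\v}^{t,s}\rangle = \tfrac12\|\nabla F(\ol{\x}^{t,s}) - \ol{\v}^{t,s}\|^2 - \tfrac12\|\nabla F(\ol{\x}^{t,s})\|^2 - \tfrac12\|\ol{\v}^{t,s}\|^2$, and then relate $\|\nabla F(\ol{\x}^{t,s}) - \ol{\v}^{t,s}\|^2$ to $\|\ol{\v}^{t,s} - \ol{\nabla\mb{f}}(\x^{t,s})\|^2$ plus a consensus term via the triangle inequality $\|a+b\|^2 \le 2\|a\|^2 + 2\|b\|^2$, together with $\|\nabla F(\ol{\x}^{t,s}) - \ol{\nabla\mb{f}}(\x^{t,s})\|^2 \leq \tfrac{1}{n}\sum_i\|\nabla f_i(\ol{\x}^{t,s}) - \nabla f_i(\x_i^{t,s})\|^2 \leq \tfrac{L^2}{n}\|\x^{t,s} - \J\x^{t,s}\|^2$, which uses Jensen's inequality and the $L$-smoothness of each $f_i$. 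Third, collect the $\|\ol{\v}^{t,s}\|^2$ contributions: the descent step contributes $-\tfrac{\alpha}{2}\|\ol{\v}^{t,s}\|^2 + \tfrac{L\alpha^2}{2}\|\ol{\v}^{t,s}\|^2 = -\tfrac{\alpha}{2}(1 - L\alpha)\|\ol{\v}^{t,s}\|^2$, and invoking $\alpha \leq \tfrac{1}{2L}$ gives $1 - L\alpha \geq \tfrac12$, so this is $\leq -\tfrac{\alpha}{4}\|\ol{\v}^{t,s}\|^2$. Fourth, take expectations, sum over $t = 0,\ldots,q$ and $s = 1,\ldots,S$; the left-hand side telescopes (across the cycle boundaries) to $\E[F(\ol{\x}^{q+1,S})] - F(\ol{\x}^{0,1})$, and rearranging yields the claimed bound. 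The factor-of-$2$ discrepancies (e.g.\ the error term has coefficient $\alpha$ rather than $\tfrac{\alpha}{2}$) are absorbed exactly by the $2\|a\|^2 + 2\|b\|^2$ splitting mentioned above.

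\textbf{Main obstacle.} The calculation itself is routine once the right splitting is chosen; the only genuine care needed is bookkeeping the constants so that the $-\tfrac{\alpha}{2}$, $-\tfrac{\alpha}{4}$, $+\alpha$, and $+\alpha L^2$ coefficients come out exactly as stated, in particular making sure the coefficient on $\|\ol{\v}^{t,s}\|^2$ stays negative (this is precisely what forces $\alpha\leq\tfrac{1}{2L}$) and that the consensus term is normalized by $n$. A secondary subtlety is verifying that the telescoping across outer-loop boundaries is valid, which follows directly from the identification $\ol{\x}^{0,s+1} = \ol{\x}^{q+1,s}$ recalled just before the lemma; no martingale or conditioning argument is needed here since the bound is purely deterministic before taking the (monotone) expectation.
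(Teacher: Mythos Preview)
Your proposal is correct and follows essentially the same route as the paper: apply the smoothness descent inequality~\eqref{DL1} along the averaged iterates using Lemma~\ref{GT}, expand the inner product via the polarization identity, split $\|\ol{\v}^{t,s}-\nabla F(\ol{\x}^{t,s})\|^2$ into the variance-reduction error and the consensus error (the latter bounded exactly as in Lemma~\ref{Lbound}), use $\alpha\leq\tfrac{1}{2L}$ to get the $-\tfrac{\alpha}{4}\|\ol{\v}^{t,s}\|^2$ coefficient, and telescope across $t$ and $s$ using $\ol{\x}^{0,s+1}=\ol{\x}^{q+1,s}$. The only cosmetic difference is that the paper packages the consensus bound as a separate lemma (Lemma~\ref{Lbound}) whereas you derive it inline.
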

\begin{proof}
See~\cref{proof_DS_sum1}.
\end{proof}
In light of Lemma~\ref{DS_sum1}, our analysis approach is to derive the range of the step-size~$\alpha$ of~\GS~such that
\begin{align*}
\frac{1}{4}\sum_{s=1}^S\sum_{t=0}^{q}\E\big[\|\ol{\v}^{t,s}\|^2\big] 
- \sum_{s=1}^S\sum_{t=0}^{q}\E\Big[\big\|\ol{\v}^{t,s}-\ol{\nabla\mb{f}}(\x^{t,s})\big\|^2\Big] 
- L^2\sum_{s=1}^S\sum_{t=0}^{q}\E\bigg[\frac{\|\mb{x}^{t,s}-\mb{J}\mb{x}^{t,s}\|^2}{n}\bigg]
\end{align*}
is non-negative and therefore establishes the convergence of~\GS~to a first-order stationary point following the standard arguments in \emph{batch} gradient descent for non-convex problems~\cite{nesterov_book,OPT_ML}. 
To this aim, we need to derive upper bounds for two error terms in the above expression: (i)~$\|\ol{\v}^{t,s}-\ol{\nabla\mb{f}}(\x^{t,s})\|^2$, the gradient estimation error; and (ii)~${\|\mb{x}^{t,s}-\mb{J}\mb{x}^{t,s}\|^2}$, the state consensus error. We quantify these two errors next and then return to Lemma~\ref{DS_sum1}. The following lemma is obtained with similar probabilistic arguments for~\SARAH-type~\cite{SPIDER,spiderboost,sarah_ncvx} estimators, however, with subtle modifications due to the decentralized network effect.
\begin{lemma}\label{vrlem}
We have:~${\forall s\geq1}$,
\begin{align*}
\sum_{t=0}^{q}\E\Big[\big\|\ol{\v}^{t,s} - \ol{\nabla\mb{f}}(\x^{t,s})\big\|^2\Big] 
\leq\frac{3q\a^2L^2}{nB}\sum_{t=0}^{q-1}\E\big[\|\ol{\v}^{t,s}\|^2\big] + \frac{6qL^2}{nB}\sum_{t=0}^{q}\E\bigg[\frac{\|\x^{t,s} - \mb{J}\x^{t,s}\|^2}{n}\bigg].
\end{align*}
\end{lemma}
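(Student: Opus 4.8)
The plan is to analyze the recursive \SARAH-type estimator $\mb{v}_i^{t,s}$ within a single outer loop $s$, following the standard variance-reduction argument but carefully tracking the decentralized effects through $\ol{\v}^{t,s}$ and the consensus error. First I would fix $s\geq 1$ and introduce the per-iteration error $\mb{e}^{t,s} := \ol{\v}^{t,s} - \ol{\nabla\mb{f}}(\x^{t,s})$. Using the recursion $\mb{v}_i^{t,s} = \frac{1}{B}\sum_{l=1}^B\big(\nabla f_{i,\tau_{i,l}^{t,s}}(\x_i^{t,s}) - \nabla f_{i,\tau_{i,l}^{t,s}}(\x_i^{t-1,s})\big) + \mb{v}_i^{t-1,s}$, and the fact that $\mb{v}_i^{0,s} = \nabla f_i(\x_i^{0,s})$ is exact (so $\mb{e}^{0,s}=\bfzero$), I would telescope to write $\mb{e}^{t,s}$ as a sum over $r=1,\dots,t$ of the per-node averaged differences between the sampled gradient increments and the true increments $\nabla f_i(\x_i^{r,s}) - \nabla f_i(\x_i^{r-1,s})$. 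Because the sampling family in Assumption~\ref{sampling} is independent across $(i,l,r)$, these increments form a martingale difference sequence with respect to the natural filtration; hence $\E[\|\mb{e}^{t,s}\|^2] = \sum_{r=1}^t \E[\|\mb{e}^{r,s}-\mb{e}^{r-1,s}\|^2]$, with all cross terms vanishing.

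Next I would bound each martingale increment. For fixed $r$, conditioning on $\mathcal{F}^{r,s}$ (the $\s$-algebra generated by everything up to and including $\x^{r,s}$ and $\x^{r-1,s}$), the increment $\mb{e}^{r,s}-\mb{e}^{r-1,s} = \frac{1}{n}\sum_{i=1}^n\big(\frac{1}{B}\sum_{l}(\nabla f_{i,\tau_{i,l}^{r,s}}(\x_i^{r,s}) - \nabla f_{i,\tau_{i,l}^{r,s}}(\x_i^{r-1,s})) - (\nabla f_i(\x_i^{r,s}) - \nabla f_i(\x_i^{r-1,s}))\big)$ is a sum of $n$ independent zero-mean terms, so its conditional second moment is $\frac{1}{n^2}\sum_i$ of the per-node conditional variances. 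Each per-node variance is at most $\frac{1}{nB}\cdot\frac{1}{m}\sum_{j=1}^m\|\nabla f_{i,j}(\x_i^{r,s}) - \nabla f_{i,j}(\x_i^{r-1,s})\|^2 \leq \frac{L^2}{nB}\|\x_i^{r,s}-\x_i^{r-1,s}\|^2$ by the minibatch-variance bound together with the mean-squared smoothness Assumption~\ref{smooth}. Summing over $i$ yields $\E[\|\mb{e}^{r,s}-\mb{e}^{r-1,s}\|^2] \leq \frac{L^2}{n^2 B}\,\E[\|\x^{r,s}-\x^{r-1,s}\|^2]$, and then $\sum_{t=0}^q\E[\|\mb{e}^{t,s}\|^2] \leq \frac{L^2}{n^2 B}\sum_{t=0}^q\sum_{r=1}^t \E[\|\x^{r,s}-\x^{r-1,s}\|^2] \leq \frac{(q+1)L^2}{n^2B}\sum_{r=1}^q\E[\|\x^{r,s}-\x^{r-1,s}\|^2]$; bounding $q+1$ by $\mc{O}(q)$ (or handling the constant carefully) gives the $\frac{3qL^2}{nB}$-type prefactor after the consensus split below.

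The remaining step is to convert the successive-iterate differences $\|\x^{r,s}-\x^{r-1,s}\|^2$ into the two quantities appearing on the right-hand side of the lemma: $\|\ol{\v}^{t,s}\|^2$ and the consensus error $\|\x^{t,s}-\mb{J}\x^{t,s}\|^2$. Using the state update $\x^{r,s} = \W\x^{r-1,s} - \a\mb{y}^{r,s}$, I would write $\x^{r,s}-\x^{r-1,s} = (\W-\I)(\x^{r-1,s}-\mb{J}\x^{r-1,s}) - \a\mb{y}^{r,s}$ (since $(\W-\I)\mb{J}=\bfzero$), and further split $\mb{y}^{r,s} = (\mb{y}^{r,s}-\mb{J}\mb{y}^{r,s}) + \mb{J}\mb{y}^{r,s}$, noting $\mb{J}\mb{y}^{r,s}$ corresponds to $\ol{\mb{y}}^{r,s} = \ol{\v}^{r-1,s}$ by Lemma~\ref{GT}. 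Via Young's inequality this produces a $\|\ol{\v}^{r-1,s}\|^2$ term and a gradient-tracker consensus error $\|\mb{y}^{r,s}-\mb{J}\mb{y}^{r,s}\|^2$; the latter would need to be folded into the state consensus error — or, more likely, the intended argument keeps the $\mb{y}$-consensus term and it is absorbed in a companion consensus lemma. \textbf{The main obstacle} is precisely this bookkeeping: getting the clean coefficients $3q\a^2 L^2/(nB)$ and $6qL^2/(nB)$ exactly (rather than with extra tracker-consensus terms) requires either that the $\W-\I$ and $\mb{y}-\mb{J}\mb{y}$ contributions are handled jointly with the consensus bound in Lemma~\ref{consensus_bound1}, or that one uses a tighter decomposition of $\x^{r,s}-\x^{r-1,s}$ directly in terms of $\ol{\v}$ and $\x-\mb{J}\x$ only. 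I would reconcile this by expressing $\x^{r,s}-\x^{r-1,s}$ so that the $\a^2$-scaled piece isolates $\ol{\v}^{r-1,s}$ (hence the $3q\a^2L^2/(nB)$ factor with the sum running to $q-1$) while the $\mc{O}(1)$-scaled piece collapses into the per-iteration consensus error, using $\|\W\x-\x\|^2 \le (1+\lambda)^2\|\x-\mb{J}\x\|^2 \le 4\|\x-\mb{J}\x\|^2$, which accounts for the factor $6 = 2\cdot 3$ once combined with the telescoping loss of a factor $q$.
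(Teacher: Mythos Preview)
Your martingale-difference analysis of the \texttt{SARAH} estimator is correct and matches the paper: conditioning on~$\mc{F}^{t,s}$, exploiting independence across nodes and minibatch indices, and using the mean-squared smoothness assumption indeed yields
\[
\E\Big[\big\|\ol{\v}^{t,s} - \ol{\nabla\mb{f}}(\x^{t,s})\big\|^2\Big]
\leq \frac{L^2}{n^2 B}\sum_{r=1}^{t}\E\Big[\big\|\x^{r,s}-\x^{r-1,s}\big\|^2\Big],
\]
exactly as in the paper (their equation~\eqref{vr3} after telescoping).

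The gap is in your treatment of $\|\x^{r,s}-\x^{r-1,s}\|^2$. You try to unroll the state update $\x^{r,s}=\W\x^{r-1,s}-\a\y^{r,s}$ and then split~$\y^{r,s}$ into its mean and its consensus error. As you correctly flag, this produces an extra $\|\y^{r,s}-\mb{J}\y^{r,s}\|^2$ term that does \emph{not} appear in the statement of the lemma, and your proposed fixes (absorbing it into Lemma~\ref{consensus_bound1}, or using $\|\W\x-\x\|^2\le 4\|\x-\mb{J}\x\|^2$) do not eliminate it. The paper avoids this obstacle entirely by a different, much simpler decomposition: insert~$\mb{J}\x^{r,s}$ and~$\mb{J}\x^{r-1,s}$ and write
\[
\x^{r,s}-\x^{r-1,s}
= \big(\x^{r,s}-\mb{J}\x^{r,s}\big)
+\big(\mb{J}\x^{r,s}-\mb{J}\x^{r-1,s}\big)
+\big(\mb{J}\x^{r-1,s}-\x^{r-1,s}\big).
\]
The middle term equals $\mb{1}_n\otimes(\ol{\x}^{r,s}-\ol{\x}^{r-1,s}) = -\a\,\mb{1}_n\otimes\ol{\v}^{r-1,s}$ by Lemma~\ref{GT}, so its squared norm is exactly $n\a^2\|\ol{\v}^{r-1,s}\|^2$. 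Applying $\|a+b+c\|^2\le 3(\|a\|^2+\|b\|^2+\|c\|^2)$ gives directly
\[
\big\|\x^{r,s}-\x^{r-1,s}\big\|^2
\le 3\big\|\x^{r,s}-\mb{J}\x^{r,s}\big\|^2
+ 3n\a^2\big\|\ol{\v}^{r-1,s}\big\|^2
+ 3\big\|\x^{r-1,s}-\mb{J}\x^{r-1,s}\big\|^2,
\]
with no~$\y$-tracker error anywhere. Substituting this into the variance bound above, telescoping, and then summing over~$t$ (relaxing each inner sum to run over~$[0,q]$) produces the coefficients~$\tfrac{3q\a^2L^2}{nB}$ and~$\tfrac{6qL^2}{nB}$ exactly. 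This is the missing idea in your proposal.
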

\begin{proof}
See~\cref{proof_vrlem}.
\end{proof}

Note that Lemma~\ref{vrlem} shows that the accumulated gradient estimation error over one inner loop may be bounded by the accumulated state consensus error and the norm of the gradient estimators. Lemma~\ref{vrlem} thus may be used to simplify the right hand side of
the descent inequality in Lemma~\ref{DS_sum1}. Naturally, what is left is to seek an upper bound for the state consensus error in terms of~$\E[\|\ol{\v}^{t,s}\|^2]$.
This result is presented in the following lemma.

\begin{lemma}\label{consensus_bound1}
If the step-size follows~${0<\alpha\leq\frac{(1-\lambda^2)^2}{8\sqrt{42}L}}$, then 
\begin{align*}
\sum_{s=1}^S\sum_{t=0}^q\mbb{E}\bigg[\frac{\|\mb{x}^{t,s}-\mb{J}\mb{x}^{t,s}\|^2}{n}\bigg]\leq\frac{64\a^2}{(1-\lambda^2)^3}\frac{\|\nabla\mb{f}(\mb{x}^{0,1})\|^2}{n}
+ \frac{1536\a^4L^2}{(1-\lambda^2)^4}\sum_{s=1}^S\sum_{t=0}^q\mathbb{E}\big[\|\ol{\v}^{t,s}\|^2\big].
\end{align*}
\end{lemma}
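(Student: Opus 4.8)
The plan is to control the consensus error $\|\mb{x}^{t,s}-\mb{J}\mb{x}^{t,s}\|^2$ by setting up a \emph{coupled linear recursion} for the two network-residual quantities $\mb{a}_{t,s} := \E[\|\mb{x}^{t,s}-\mb{J}\mb{x}^{t,s}\|^2/n]$ and $\mb{b}_{t,s} := \E[\|\mb{y}^{t,s}-\mb{J}\mb{y}^{t,s}\|^2/n]$, and then summing. First I would project \eqref{gtsarah1}--\eqref{gtsarah2} onto the disagreement subspace: using $\mb{J}\mb{W}=\mb{W}\mb{J}=\mb{J}$ and Lemma~\ref{W}, write $\mb{x}^{t+1,s}-\mb{J}\mb{x}^{t+1,s} = \mb{W}(\mb{x}^{t,s}-\mb{J}\mb{x}^{t,s}) - \alpha(\mb{y}^{t+1,s}-\mb{J}\mb{y}^{t+1,s})$ and similarly for the gradient-tracker residual, where the driving term $\mb{v}^{t,s}-\mb{v}^{t-1,s}$ must be expanded. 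The key is that the~\SARAH~increment $\mb{v}^{t,s}-\mb{v}^{t-1,s}$ is, in expectation, close to $\nabla\mb{f}(\mb{x}^{t,s})-\nabla\mb{f}(\mb{x}^{t-1,s})$, whose norm is controlled by $L$-smoothness in terms of $\|\mb{x}^{t,s}-\mb{x}^{t-1,s}\|$, and the latter is bounded using the state update \eqref{gtsarah2} by a combination of consensus errors and $\alpha^2\|\ol{\mb{v}}^{t,s}\|^2$. Applying Young's inequality $\|\mb{u}+\mb{w}\|^2 \leq (1+\eta)\|\mb{u}\|^2 + (1+\eta^{-1})\|\mb{w}\|^2$ with $\eta \asymp (1-\lambda^2)/\lambda^2$ to absorb the contraction factor $\lambda$, I would obtain entry-wise inequalities of the form
\begin{align*}
\mb{a}_{t+1,s} &\leq \tfrac{1+\lambda^2}{2}\mb{a}_{t,s} + c_1\alpha^2\,\mb{b}_{t+1,s}, \\
\mb{b}_{t+1,s} &\leq \tfrac{1+\lambda^2}{2}\mb{b}_{t,s} + c_2 L^2(\mb{a}_{t,s}+\mb{a}_{t-1,s}) + c_3 L^2\alpha^2\,\E[\|\ol{\mb{v}}^{t,s}\|^2] + \dots,
\end{align*}
for universal constants $c_i$, valid across the inner-outer loop boundary because of the carry-over identities $\mb{x}^{0,s+1}=\mb{x}^{q+1,s}$, etc.

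Next I would linearize this into a single scalar recursion. Substituting the $\mb{b}$-inequality into the $\mb{a}$-inequality and iterating, or equivalently forming a $2\times 2$ (or $3\times 3$, to accommodate the $\mb{a}_{t-1,s}$ term) nonnegative system matrix $\mb{M}$ and bounding $\sum \mb{a}_{t,s}$ by $\|(\I-\mb{M})^{-1}\|$ times the accumulated forcing terms, the step-size restriction $\alpha \leq (1-\lambda^2)^2/(8\sqrt{42}L)$ is exactly what guarantees $\rho(\mb{M})<1$ with a quantitative spectral gap, so that the geometric series converges and yields a bound of the form $\sum_{s,t}\mb{a}_{t,s} \lesssim \tfrac{\alpha^2}{(1-\lambda^2)^3}\cdot\tfrac{\|\nabla\mb{f}(\mb{x}^{0,1})\|^2}{n} + \tfrac{\alpha^4 L^2}{(1-\lambda^2)^4}\sum_{s,t}\E[\|\ol{\mb{v}}^{t,s}\|^2]$. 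The $(1-\lambda^2)^{-3}$ and $(1-\lambda^2)^{-4}$ powers arise naturally: each inversion of $\I-\mb{M}$ contributes one factor of $(1-\lambda^2)^{-1}$ from the spectral gap and the Young-inequality weights contribute the rest. The initial-condition term comes from $\mb{x}^{0,1}=\mb{J}\mb{x}^{0,1}$ (all nodes start consensual) so $\mb{a}_{0,1}=0$, but $\mb{b}^{1,1}$ is seeded by $\mb{v}^{0,1}-\mb{v}^{-1,1}=\nabla\mb{f}(\mb{x}^{0,1})$ since $\mb{v}^{-1,1}=\mb{0}$, which contributes the $\|\nabla\mb{f}(\mb{x}^{0,1})\|^2/n$ term.

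The main obstacle will be handling the cross-loop coupling and the $\mb{a}_{t-1,s}$ dependence cleanly. Because the~\SARAH~increment depends on \emph{two} consecutive states, the consensus recursion is genuinely second-order in $t$ within each inner loop, and at the loop boundary $t=0$ the "past state" $\mb{x}^{-1,s+1}$ must be identified with $\mb{x}^{q,s}$, so a naive per-inner-loop argument loses a term. I would handle this by treating the entire doubly-indexed sequence $\{(t,s)\}$ as a single linearly ordered sequence of length $S(q+1)$ and running the recursion globally, which is where the novelty of the analysis framework advertised in the paper lies. A secondary subtlety is that the driving term in the $\mb{b}$-recursion involves $\mb{v}^{t,s}-\mb{v}^{t-1,s}$ whose second moment, after taking conditional expectations and using Assumption~\ref{sampling}, splits into a "bias" part $\leq L^2\|\mb{x}^{t,s}-\mb{x}^{t-1,s}\|^2$ plus a "variance" part of the same order divided by $B$; both must be re-expanded via \eqref{gtsarah2} without creating a circular dependence on $\mb{a}_{t+1,s}$, which forces a careful ordering of the Young-inequality splits. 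Once the scalar recursion is in hand with $\rho(\mb{M})<1$, the summation and the final constants ($64$, $1536$) are routine bookkeeping.
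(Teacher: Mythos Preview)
Your proposal is correct and follows essentially the same route as the paper: a coupled two-vector LTI in the consensus and gradient-tracker residuals, Young's inequality with $\eta\asymp(1-\lambda^2)/\lambda^2$ to secure a contractive system matrix, reindexing the doubly-indexed sequence as a single chain of length $S(q+1)$, and summing via $(\I_2-\G)^{-1}$; the paper even makes the LTI first-order by pairing $\|\mb{x}^{t,s}-\mb{J}\mb{x}^{t,s}\|^2$ with $\|\mb{y}^{t+1,s}-\mb{J}\mb{y}^{t+1,s}\|^2$ (a one-step index shift), which is the $2\times2$ option you mention. The one place your sketch understates the difficulty is the outer-loop boundary: there the driving term is $\mb{v}^{0,s}-\mb{v}^{-1,s}=\nabla\mb{f}(\mb{x}^{0,s})-\mb{v}^{q,s-1}$, and controlling $\|\nabla\mb{f}(\mb{x}^{q,s-1})-\mb{v}^{q,s-1}\|^2$ is not a single-step identification of $\mb{x}^{-1,s+1}$ with $\mb{x}^{q,s}$ but requires the \emph{accumulated} \texttt{SARAH} estimation error over the entire previous inner loop (the paper proves this separately as Lemma~\ref{vrs}); this injects, at each boundary $s\to s+1$, an extra term proportional to $\sum_{t=0}^{q}\E[\|\mb{x}^{t,s-1}-\mb{J}\mb{x}^{t,s-1}\|^2]$ (the matrix $\mb{H}$ in the paper's LTI), which is exactly what your global single-loop summation then absorbs.
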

\begin{proof}
See~\cref{proof_consensus_bound1}.
\end{proof}
Establishing Lemma~\ref{consensus_bound1} requires a careful analysis; here, we provide a brief sketch. Recall the \GS~algorithm in~\eqref{gtsarah1}-\eqref{gtsarah2} and note that the state vector~$\mb x^{t,s}$ is coupled with the gradient tracker~$\mb y^{t,s}$. Thus, in order to quantify the state consensus error~$\|\mb x^{t,s} - \mb{J} \mb x^{t,s}\|^2$, we need to establish its relationship with the gradient tracking error~$\|\mb y^{t,s} - \mb{J} \mb y^{t,s}\|^2$. In fact, we show that these coupled errors jointly formulate a linear time-invariant (LTI) system dynamics whose system matrix is stable under a certain range of the step-size~$\a$. Solving this LTI yields Lemma~\ref{consensus_bound1}.

Finally, it is straightforward to use Lemmas~\ref{vrlem} and~\ref{consensus_bound1} to refine the descent inequality in Lemma~\ref{DS_sum1} to obtain the following result.
\begin{lemma}\label{DS_sum2}
If~$0<\alpha\leq\ol{\a}:=\min\left\{\frac{(1-\lambda^2)^2}{4\sqrt{42}},\big(\frac{nB}{6q}\big)^{\sfrac{1}{2}},\big(\frac{4nB}{7nB+24q}\big)^{\sfrac{1}{4}}\frac{1-\lambda^2}{6}\right\}\frac{1}{2L}$, then 
\begin{align*}
L^2\sum_{s=1}^S\sum_{t=0}^{q}\E\bigg[\frac{\|\mb{x}^{t,s}-\mb{J}\mb{x}^{t,s}\|^2}{n}&\bigg]
+ \frac{1}{n}\sum_{i=1}^n\sum_{s=1}^S\sum_{t=0}^{q}\E\Big[\big\|\nabla F\big(\x_i^{t,s}\big)\big\|^2\Big] \n\\
\leq&~\frac{4(F\big(\ol{\x}^{0,1}\big) - F^*)}{\a} 
+ \left(\frac{7}{4}+\frac{6q}{nB}\right) \frac{256\a^2L^2}{(1-\lambda^2)^3}\dfrac{\|\nabla\mb{f}(\mb{x}^{0,1})\|^2}{n}.
\end{align*}
\end{lemma}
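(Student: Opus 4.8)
The plan is to chain the three lemmas just established — the descent inequality of Lemma~\ref{DS_sum1}, the variance-reduction bound of Lemma~\ref{vrlem}, and the consensus bound of Lemma~\ref{consensus_bound1} — into a single estimate, and then to verify that the three factors defining the step-size bound $\ol{\a}$ together force the net coefficient of $V:=\sum_{s=1}^{S}\sum_{t=0}^{q}\E[\|\ol{\v}^{t,s}\|^{2}]$ to be nonpositive, so that it may be discarded. It is also convenient to abbreviate $P:=\sum_{s=1}^{S}\sum_{t=0}^{q}\E[\|\nabla F(\ol{\x}^{t,s})\|^{2}]$, $X:=\sum_{s=1}^{S}\sum_{t=0}^{q}\E[\|\mb{x}^{t,s}-\mb{J}\mb{x}^{t,s}\|^{2}]/n$, and $G_{0}:=\|\nabla\mb{f}(\mb{x}^{0,1})\|^{2}/n$. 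Since the first factor of the minimum already forces $\ol{\a}\le\frac{(1-\lambda^{2})^{2}}{8\sqrt{42}\,L}\le\frac{1}{2L}$, both Lemma~\ref{DS_sum1} and Lemma~\ref{consensus_bound1} are applicable.

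I would start from Lemma~\ref{DS_sum1}, use $F(\ol{\x}^{q+1,S})\ge F^{*}$ to drop the left-hand side and bring the term $-\frac{\a}{4}V$ to the left, then sum Lemma~\ref{vrlem} over $s\in[1,S]$ (harmlessly enlarging $\sum_{t=0}^{q-1}$ to $\sum_{t=0}^{q}$, since all summands are nonnegative) and substitute it for the gradient-estimation-error term $\a\sum_{s,t}\E[\|\ol{\v}^{t,s}-\ol{\nabla\mb{f}}(\x^{t,s})\|^{2}]$. This produces the master inequality
\[ \tfrac{\a}{2}P+\tfrac{\a}{4}V\ \le\ F(\ol{\x}^{0,1})-F^{*}+\tfrac{3q\a^{3}L^{2}}{nB}\,V+\a L^{2}\bigl(1+\tfrac{6q}{nB}\bigr)X . \]
Substituting Lemma~\ref{consensus_bound1}, $X\le\frac{64\a^{2}}{(1-\lambda^{2})^{3}}G_{0}+\frac{1536\a^{4}L^{2}}{(1-\lambda^{2})^{4}}V$, collecting the $V$-terms, and multiplying through by $4/\a$ then yields
\[ 2P\ \le\ \tfrac{4(F(\ol{\x}^{0,1})-F^{*})}{\a}+\tfrac{256\a^{2}L^{2}(1+6q/(nB))}{(1-\lambda^{2})^{3}}\,G_{0}-\Bigl(1-\tfrac{12q\a^{2}L^{2}}{nB}-\tfrac{6144\a^{4}L^{4}(1+6q/(nB))}{(1-\lambda^{2})^{4}}\Bigr)V . \]

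To pass to the nodewise quantity in the statement, I would use $L$-smoothness of $F$ together with Young's inequality, $\frac1n\sum_{i=1}^{n}\|\nabla F(\x_{i}^{t,s})\|^{2}\le 2\|\nabla F(\ol{\x}^{t,s})\|^{2}+2L^{2}\|\mb{x}^{t,s}-\mb{J}\mb{x}^{t,s}\|^{2}/n$, so that the left-hand side of the claimed bound is at most $2P+3L^{2}X$. Bounding the residual $3L^{2}X$ once more by Lemma~\ref{consensus_bound1} — which contributes $\frac{192\a^{2}L^{2}}{(1-\lambda^{2})^{3}}G_{0}+\frac{4608\a^{4}L^{4}}{(1-\lambda^{2})^{4}}V$ — and adding this to the displayed bound for $2P$, the $G_{0}$-coefficients collapse via $256(1+\tfrac{6q}{nB})+192=256(\tfrac{7}{4}+\tfrac{6q}{nB})$, which is exactly the coefficient in the statement, while the accumulated coefficient of $V$ becomes $\tfrac{12q\a^{2}L^{2}}{nB}+\tfrac{(6144(1+6q/(nB))+4608)\a^{4}L^{4}}{(1-\lambda^{2})^{4}}-1$.

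The main obstacle — and the only place the remaining two factors of $\ol{\a}$ are genuinely used — is to show this accumulated coefficient of $V$ is $\le 0$. The factor $\a\le(\tfrac{nB}{6q})^{1/2}\tfrac{1}{2L}$ gives $\a^{2}L^{2}\le\tfrac{nB}{24q}$, whence $\tfrac{12q\a^{2}L^{2}}{nB}\le\tfrac12$; the factor $\a\le(\tfrac{4nB}{7nB+24q})^{1/4}\tfrac{1-\lambda^{2}}{12L}$ gives $\tfrac{\a^{4}L^{4}}{(1-\lambda^{2})^{4}}\le\tfrac{4nB}{20736(7nB+24q)}$, and since $6144(1+\tfrac{6q}{nB})+4608=\tfrac{10752\,nB+36864\,q}{nB}$, the one-line comparison $86016\,nB+294912\,q\le 145152\,nB+497664\,q$ shows the second summand is $\le\tfrac12$ as well; hence the coefficient of $V$ is nonpositive, $V$ is dropped, and the lemma follows. (The first factor $\a\le\tfrac{(1-\lambda^{2})^{2}}{8\sqrt{42}L}$ is spent earlier, in legitimizing Lemma~\ref{consensus_bound1}.) The one subtlety to respect is not to discard $V$ prematurely: each of the two appeals to Lemma~\ref{consensus_bound1} injects a \emph{positive} multiple of $V$, which cancels with the negative $V$-term coming from the descent inequality only after the $2P$ bound and the $3L^{2}X$ bound have been added together.
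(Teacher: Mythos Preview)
Your proof is correct and follows essentially the same strategy as the paper's: combine Lemma~\ref{DS_sum1}, Lemma~\ref{vrlem}, and Lemma~\ref{consensus_bound1}, then use the three step-size bounds to make the net $V$-coefficient nonpositive. The only cosmetic difference is the order of operations: the paper first replaces $\|\nabla F(\ol{\x}^{t,s})\|^2$ by the nodewise quantity (picking up the $\tfrac{3}{2}$ coefficient on $X$), then writes $(\tfrac{3}{2}+\tfrac{6q}{nB})X=(\tfrac{7}{4}+\tfrac{6q}{nB})X-\tfrac{1}{4}X$ and applies Lemma~\ref{consensus_bound1} only to the first piece, retaining $-\tfrac{\a L^2}{4}X$ for the left-hand side; you instead bound $2P$ first and add the $3L^2X$ contribution afterward. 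The resulting $V$-coefficient and $G_0$-coefficient are literally identical in both arguments (indeed $12288(\tfrac{7}{4}+\tfrac{6q}{nB})=21504+73728\,q/(nB)$, matching your $2\cdot(10752+36864\,q/(nB))$), so the two routes are equivalent reorderings of the same computation.
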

\begin{proof}
See~\cref{proof_DS_sum2}.
\end{proof}
We note that the descent inequality in~\cref{DS_sum2} that characterizes the convergence of \textbf{\texttt{GT-SARAH}} is independent of the variance of local gradient estimators and of the difference between the local and the global gradient. In fact, it has similarities to that of the centralized~\emph{batch} gradient descent~\cite{nesterov_book,OPT_ML}; see also the discussion on \texttt{DSGD} in Section~\ref{sec:intro}. This is a consequence of the joint use of the local variance reduction and the global gradient tracking. This is essentially why we are able to match the gradient complexity of the centralized near-optimal methods for finite sum problems and obtain the almost sure convergence guarantee of \texttt{GT-SARAH} to a stationary point. 

\subsection{Proofs of the main theorems}\label{Proof_main}
With the refined descent inequality in Lemma \ref{DS_sum2} at hand, Theorems~\ref{asp},~\ref{out}, and~\ref{main} are now straightforward to prove.
\begin{T1}
We observe from \cref{DS_sum2} that if~$0<\a\leq\ol{\a}$, then $\sum_{s=1}^{\infty}\sum_{t=0}^{q}\E\big[\|\nabla F(\x_i^{t,s})\|^2 + L^2\|\x_i^{t,s}-\ol{\x}^{t,s}\|^2\big]<\infty,\forall i\in\mc{V},$ which implies all nodes achieve consensus and converge to a stationary point in the mean-squared sense. Further, by monotone convergence theorem~\cite{probability_with_martinagles}, we exchange the order of the expectation and the series to obtain:~$
\E\big[\sum_{s=1}^{\infty}\sum_{t=0}^{q}\big(\|\nabla F(\x_i^{t,s})\|^2 + L^2\|\x_i^{t,s}-\ol{\x}^{t,s}\|^2\big)\big]< \infty$, $\forall i\in\mc{V}$, which leads to ${\P\big(\sum_{s=1}^{\infty}\sum_{t=0}^{q}\big(\|\nabla F(\x_i^{t,s})\|^2 + L^2\|\x_i^{t,s}-\ol{\x}^{t,s}\|^2\big)<\infty\big)=1},\forall i\in\mc{V}$,
i.e., the consensus and convergence to a stationary point in the almost sure sense.
\end{T1}

\begin{T2}
We recall the metric of the outer loop complexity in Definition~\ref{FSL} and we divide the descent inequality in \cref{DS_sum2} by~$S(q+1)$ from both sides. It is then clear that to find an~$\epsilon$-accurate stationary point of~$F$, it suffices to choose the total number of the outer loop iterations~$S$ such that 
\begin{align}\label{S_sol}
\frac{4\left(F(\ol{\x}^{0,1}) - F^*\right)}{S(q+1)\a} 
+ \left(\frac{7}{4}+\frac{6q}{nB}\right)\frac{256\a^2L^2}{S(q+1)(1-\lambda^2)^3}\frac{\|\nabla\mb{f}(\mb{x}^{0,1})\|^2}{n} \leq\epsilon^2.
\end{align}
The proof follows by that if~$0<\a\leq\big(\frac{4nB}{7nB+24q}\big)^{\sfrac{1}{3}}\frac{1-\lambda^2}{12L}$, then~$(\frac{7}{4}+\frac{6q}{nB})\frac{256\a^2L^2}{(1-\lambda^2)^3}\leq\frac{1}{\a L}$, and by solving for the lower bound on~$S$ such that~\eqref{S_sol} holds.
\end{T2}

\begin{T3}
During each inner loop, \texttt{GT-SARAH} incurs~${n(m + 2qB)}$ component gradient computations across all nodes and~$q$ rounds of communication of the network. Hence, to find an~$\epsilon$-accurate stationary point of~$F$, \texttt{GT-SARAH} requires, according to \cref{out}, at most 
$$\mc{H} = \mc{O}\bigg(\frac{n(m+qB)}{q\a L\epsilon^2}\bigg(L\left(F\big(\ol{\x}^{0,1}\big) - F^*\right)
+ \frac{\|\nabla\mb{f}(\mb{x}^{0,1})\|^2}{n}\bigg)$$
component gradient computations across all nodes and
$$\mc{K} = \mc{O}\bigg(\frac{1}{\a L\epsilon^2}\bigg(L\left(F\big(\ol{\x}^{0,1}\big) - F^*\right)
+ \frac{\|\nabla\mb{f}(\mb{x}^{0,1})\|^2}{n}\bigg)\bigg)$$
rounds of communication of the network. The proof follows by setting the step-size~$\a$ as its upper bound in \cref{out} and
the length of the inner loop as $q = \mc{O}(\frac{m}{B})$.
\end{T3}

\section{Numerical experiments}\label{numer}
{\color{black}In this section, we illustrate, by numerical experiments, our main theoretical claim that \texttt{GT-SARAH} finds a first-order stationary point of Problem~\eqref{P}
with a significantly improved gradient complexity compared to the existing decentralized stochastic gradient methods.

\subsection{Setup}
We consider a \emph{non-convex} logistic regression model~\cite{LR_NCVX} for binary classification over a decentralized network of~$n$ nodes with~$m$ data samples at each node: $\min_{\x\in\mathbb{R}^p}F(\mb{x}) 
:= \frac{1}{n}\sum_{i=1}^{n}\frac{1}{m}\sum_{j=1}^{m}\big(f_{i,j}(\x)
+ r(\x)\big),$ such that the logistic loss~$f_{i,j}(\x)$ and the non-convex regularization~$r(\x)$ are given by
\begin{align}\label{lr_model}
f_{i,j}(\x) := \log\Big[1+\exp\big\{\!-(\mb{x}^\top\bds{\theta}_{i,j})\xi_{i,j}\big\}\Big]
~
\text{and}
~
r(\x) := R\tsum_{d=1}^p[\x]_d^2\big(1+[\x]_d^2\big)^{-1},
\end{align}
where~$[\x]_d$ denotes the~$d$-th coordinate of~$\x$.
In~\eqref{lr_model}, note that~${\bds{\theta}_{i,j}\in\mathbb{R}^{p}}$ is the~$j$-th data sample at the~$i$-th node and~${\xi_{i,j}\in\{-1,+1\}}$ is the corresponding binary label. The details of the datasets under consideration are provided in Table~\ref{datasets}. We normalize each data sample such that~$\|\bds{\theta}_{i,j}\| = 1, \forall i,j$, and set the regularization parameter as~$R = 10^{-3}$. The primitive doubly stochastic weight matrices associated with the networks are generated by the lazy Metroplis rule~\cite{tutorial_nedich}. We characterize the performance of the algorithms in
comparison in terms of the decrease of the network stationary gap versus epochs, where the stationary gap is defined as~$\|\nabla F(\ol{\x})\| + \frac{1}{n}\sum_{i=1}^n\|\x_i-\ol{\x}\|$, where~$\x_i$ is the estimate of the stationary point of~$F$ at node~$i$ and~$\ol{\x} := \frac{1}{n}\sum_{i=1}^n\x_i$, and each epoch represents~$m$ component gradient computations at each node. 

\subsection{Performance comparisons} We compare the performance of \texttt{GT-SARAH} with \texttt{DSGT}~\cite{improved_DSGT_Xin} and \texttt{D-GET}~\cite{D_Get}; we note that \texttt{D2}~\cite{D2} and \texttt{DSGD}~\cite{DSGD_nips} are not presented here for conciseness, since in general the former achieves a similar performance with \texttt{DSGT} and the latter underperforms \texttt{DSGT} and \texttt{D2}~\cite{SPM_Xin,SPM_Hong,D2}. 
Towards the parameter selection of each algorithm, we use the following setup: (i) for \texttt{GT-SARAH}, we choose its minibatch size as~$B = 1$ and its inner-loop length as~$q = m$ in light of~\cref{main1}; (ii) for \texttt{D-GET}, we choose its minibatch size and inner-loop length as~$\floor{m^{\sfrac{1}{2}}}$ under which its convergence is established; see Theorem~$1$ in~\cite{D_Get}; and (iii) we manually optimize the step-sizes for \texttt{GT-SARAH}, \texttt{D-GET}, and \texttt{DSGT} across all experiments.

\begin{table}[hbt]
\footnotesize
\renewcommand{\arraystretch}{1.1}
\caption{Datasets used in numerical experiments, available at \href{https://www.openml.org/}{https://www.openml.org/}.}
\vspace{-0.2cm}
\begin{center}
\begin{tabular}{|c|c|c|c|}
\hline
\textbf{Dataset} & \textbf{Number of samples} ($N = nm$)  & \textbf{dimension} ($p$) \\ \hline
covertype &  $100,\!000$ & $54$ \\ \hline
MiniBooNE &  $100,\!000$ & $51$ \\ \hline
KDD98 &  $82,\!000$ & $477$ \\ \hline
w8a & $60,\!000$ & $300$  \\ \hline
a9a & $48,\!800$ & $124$  \\ \hline
Fashion-MNIST (T-shirt versus dress) & $10,\!000$ & $784$  \\ \hline
\end{tabular}
\end{center}
\label{datasets}
\vspace{-0.2cm}
\end{table}

We first compare the performances of \texttt{GT-SARAH}, \texttt{DSGT}, and \texttt{D-GET} in the big-data regime, that is, the number of samples~$m$ at each node is relatively large. To this aim, we distribute the covertype, MiniBooNE, and KDD98 dataset
over a~$10$-node exponential graph~\cite{tutorial_nedich} whose associated second largest singular value~$\lambda \approx 0.71$. The experimental results are presented in~\cref{figure_comp_bigdata}, where \texttt{GT-SARAH} outperforms \texttt{DSGT} and \texttt{D-GET}. We also observe that \texttt{D-GET} outperforms \texttt{DSGT} in this case since the performance of the latter is deteriorated by the large variance of the stochastic gradients as the number of the samples~$m$ at each node is large.   

\begin{figure}[!ht]
\centering
\includegraphics[width=1.65in]{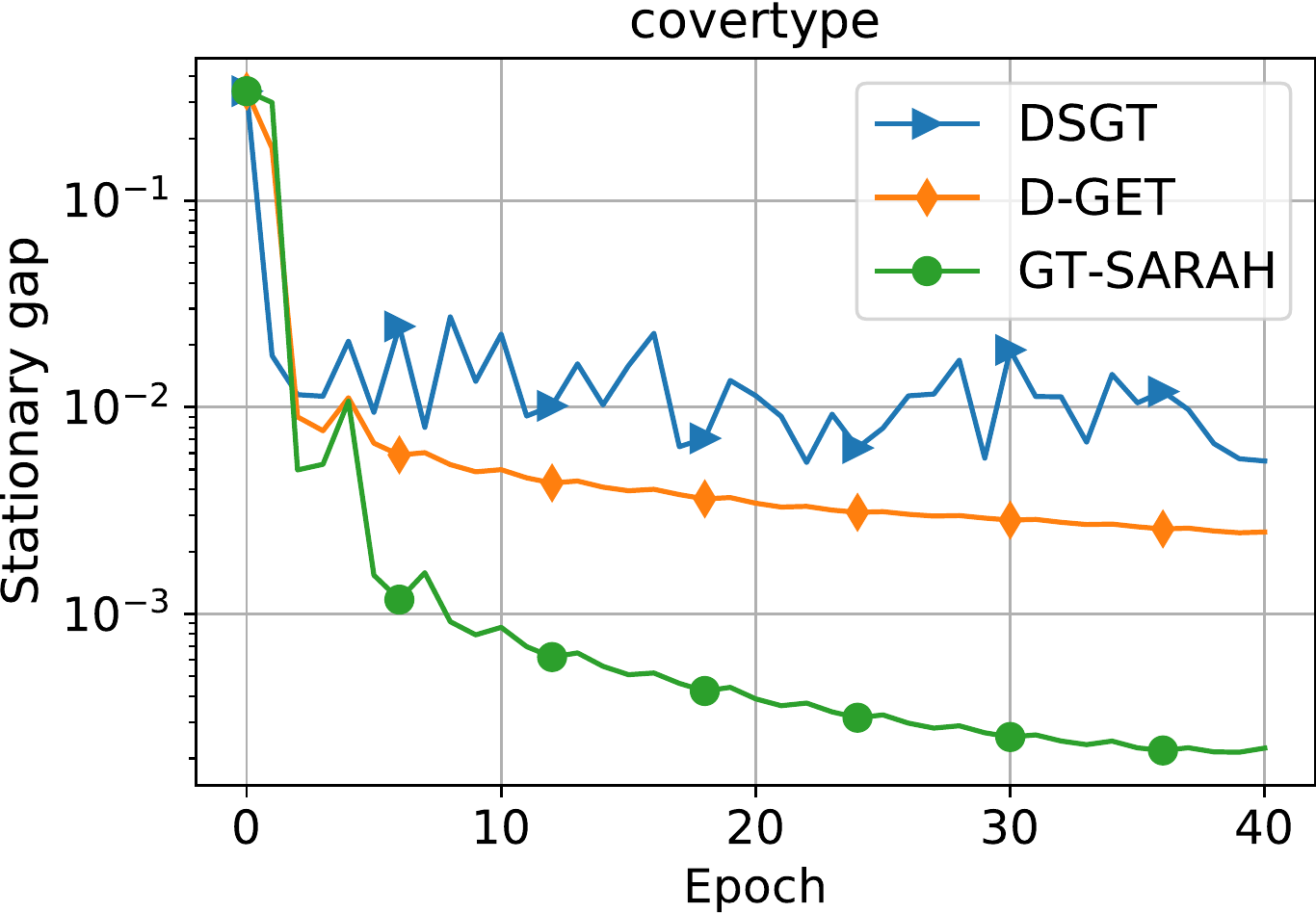}~
\includegraphics[width=1.65in]{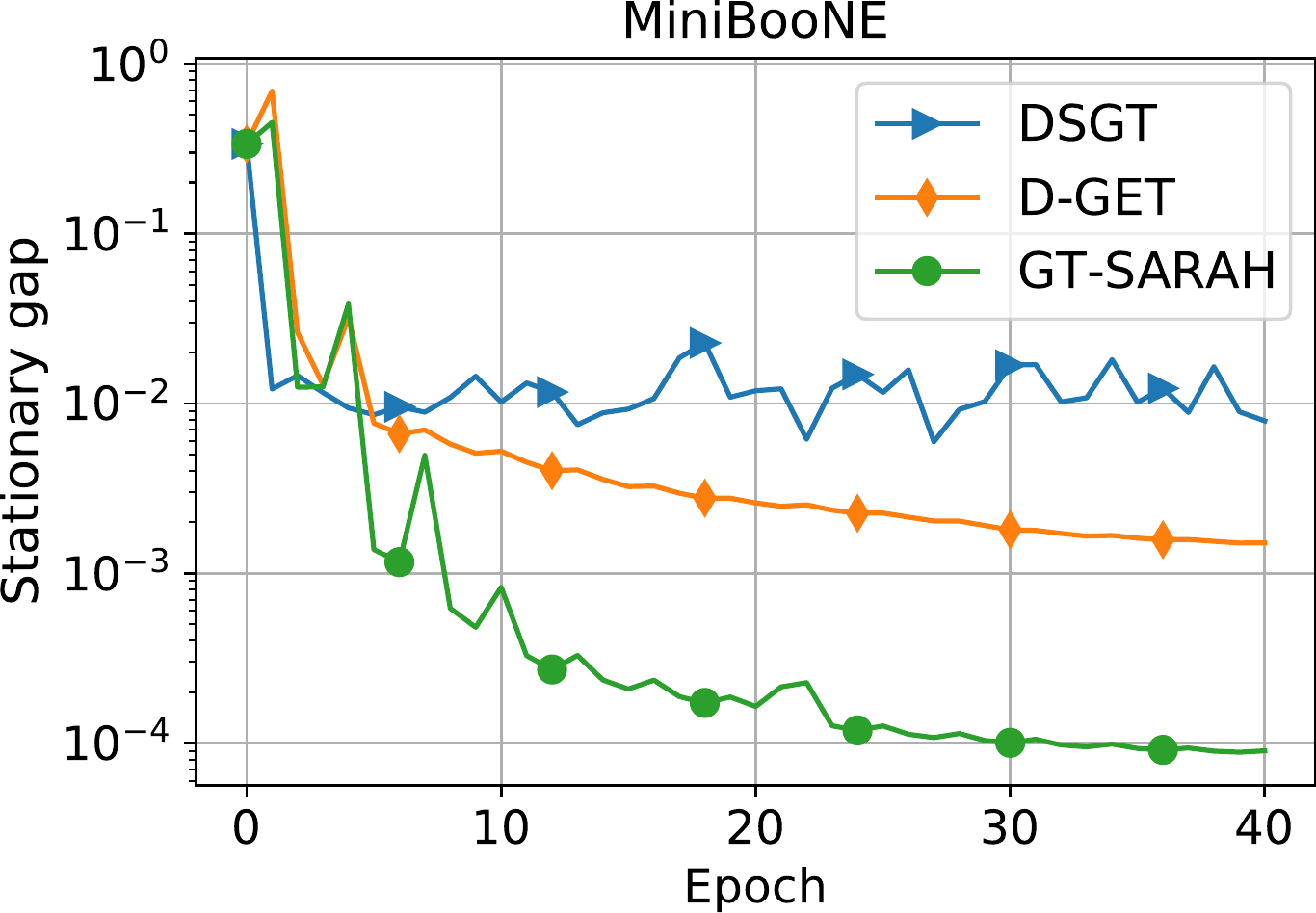}~
\includegraphics[width=1.65in]{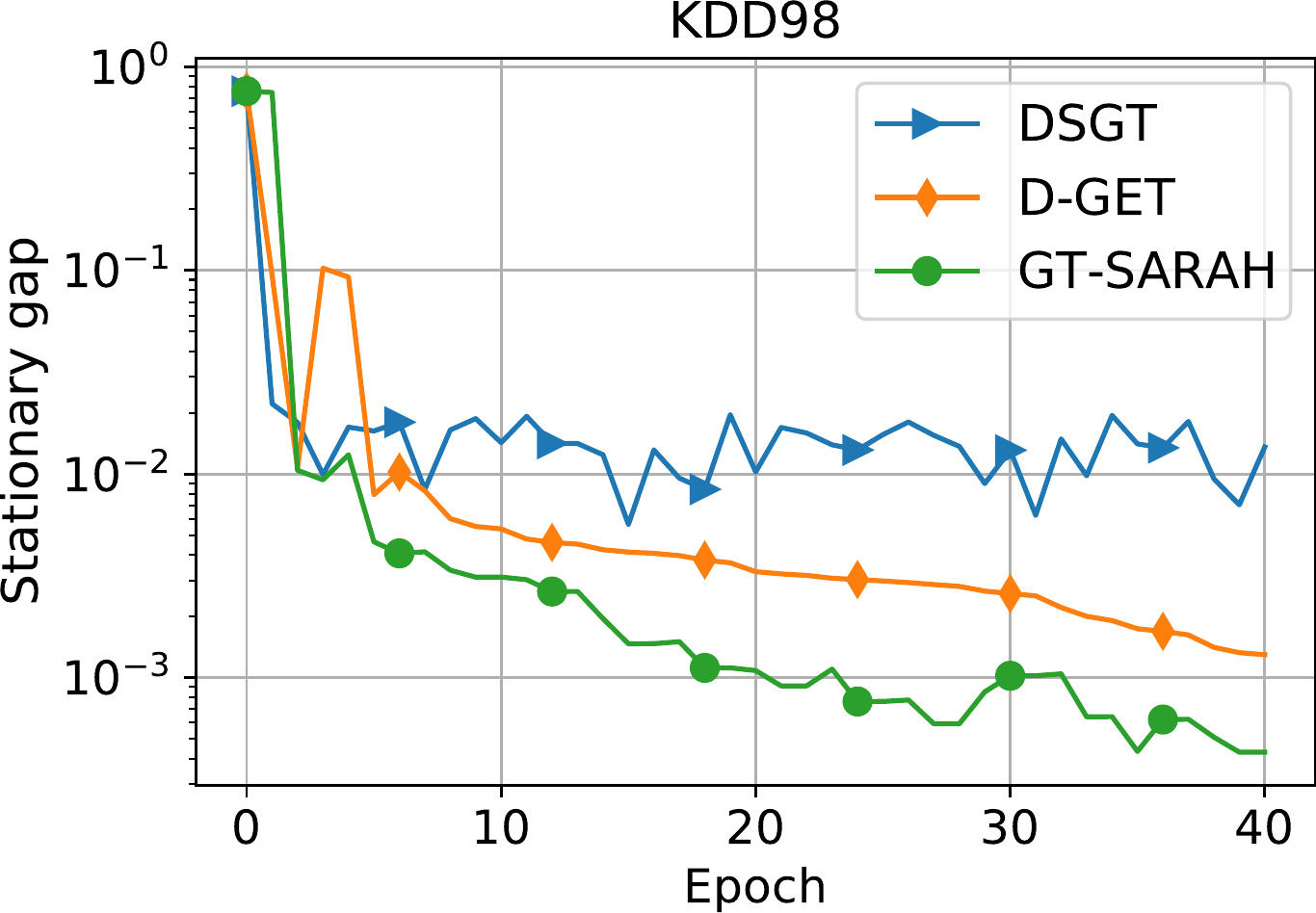}
\vspace{-0.7cm}
\caption{Performance comparison of \texttt{GT-SARAH}, \texttt{DSGT}, and \texttt{D-GET} over a~$10$-node exponential graph on the covertype, MiniBooNE, and KDD98 dataset.}
\label{figure_comp_bigdata}
\end{figure}

We next consider the large-scale network regime, where the network spectral gap inverse~$(1-\lambda)^{-1}$ and the number of the nodes~$n$ are relatively large compared with the local sample size~$m$. We distribute the w8a, a9a, and Fashion-MNIST dataset over the~$n = 10\times 10$ grid graph whose associated second largest eigenvalue~$\lambda \approx 0.99$. The performance comparison of the algorithms is shown in~\cref{figure_comp_bignet}, where we observe that \texttt{GT-SARAH} still outperforms \texttt{DSGT} and \texttt{D-GET}. Besides, it is worth noting that \texttt{D-GET} underperforms \texttt{DSGT} in this case. We provide an explanation about this phenomenon in the following. In the regime where~$m$ is relatively small, the variance of the stochastic gradients is relatively small and as a consequence \texttt{DSGT} performs well. 
On the other hand, the minibatch size~$\floor{m^{\sfrac{1}{2}}}$ of \texttt{D-GET} is too large in this regime to achieve a satisfactory performance; see the related discussion in Subsection~\ref{two_regimes}. 

\begin{figure}[!ht]
\centering
\includegraphics[width=1.65in]{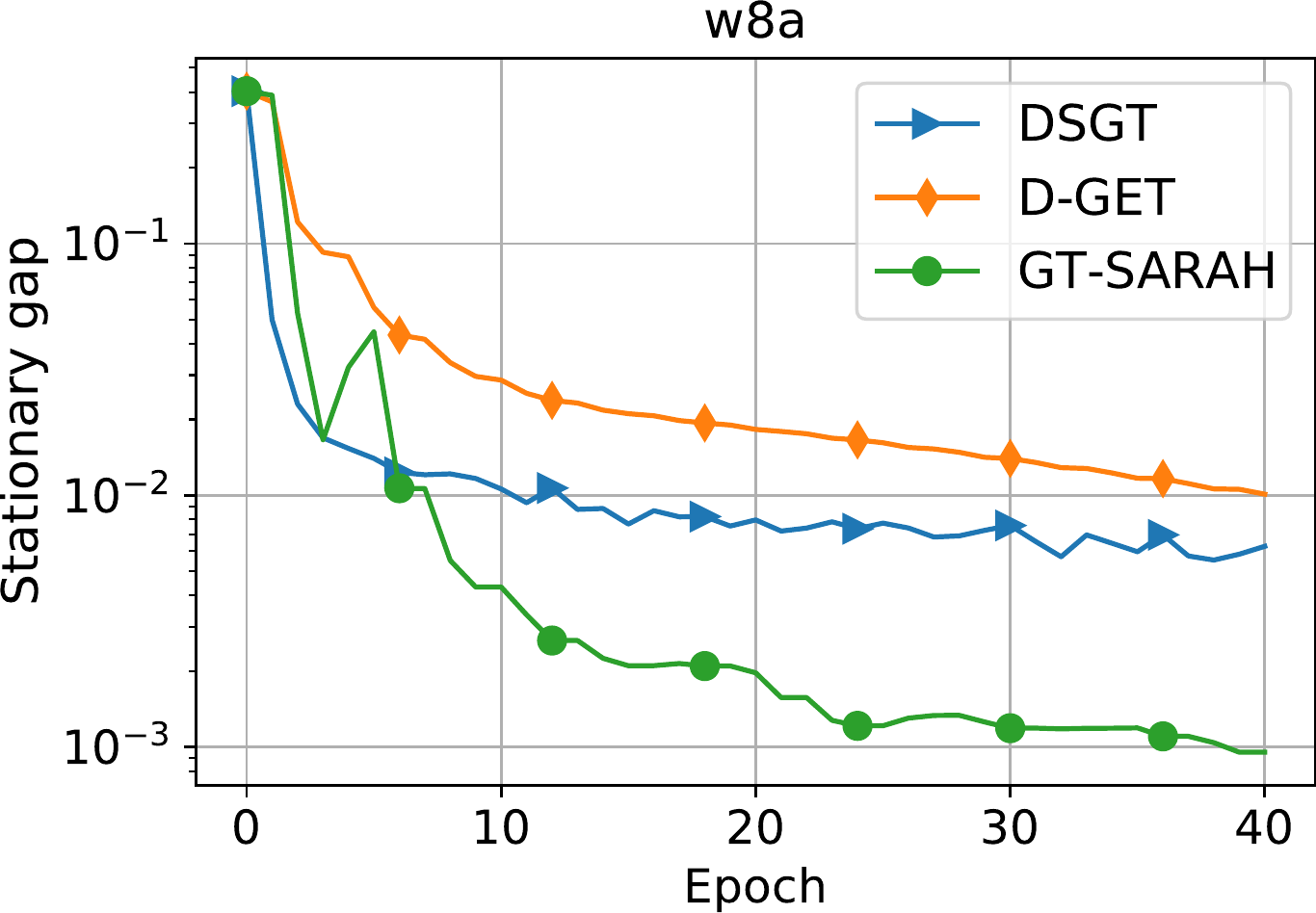}~
\includegraphics[width=1.65in]{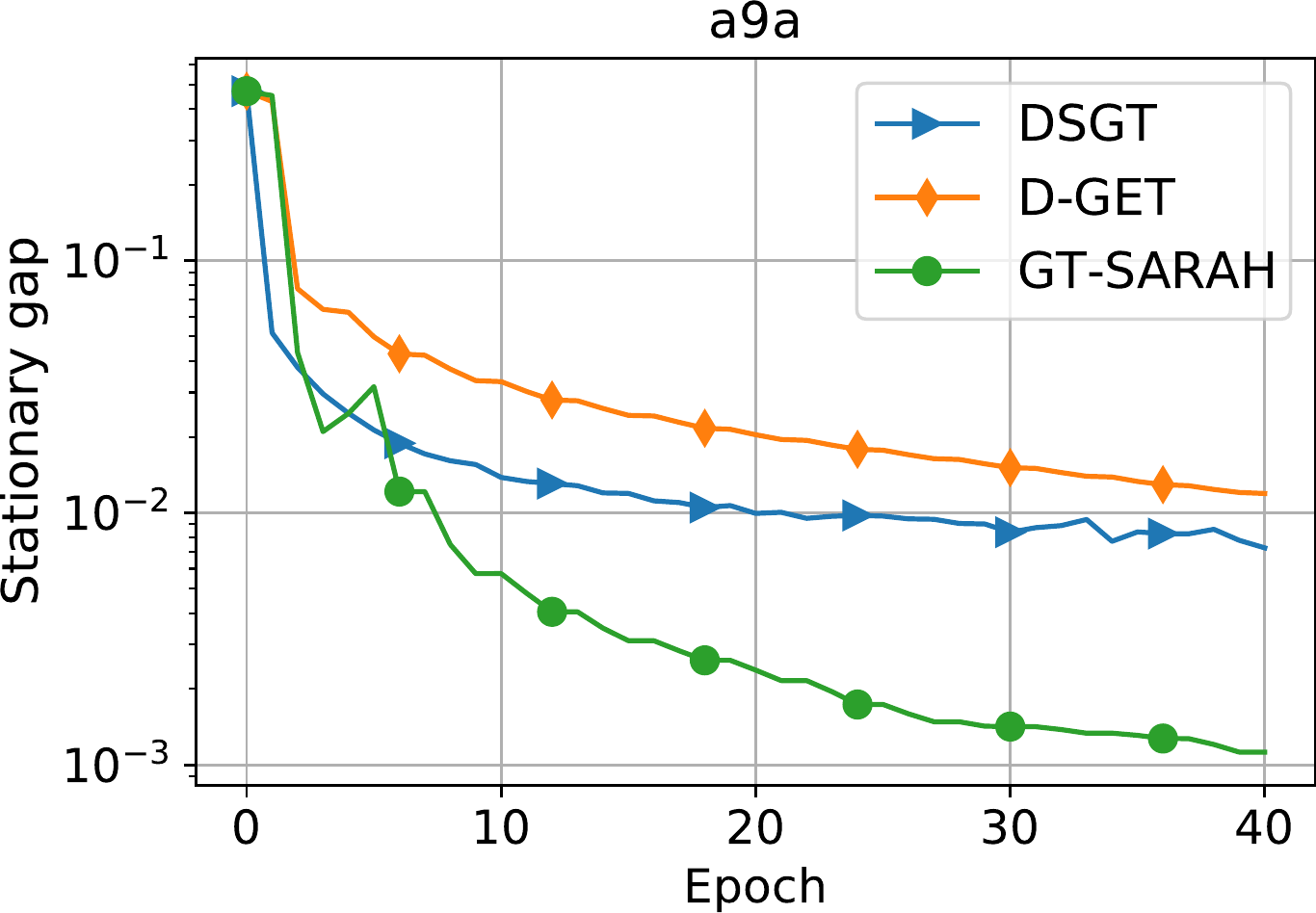}~
\includegraphics[width=1.65in]{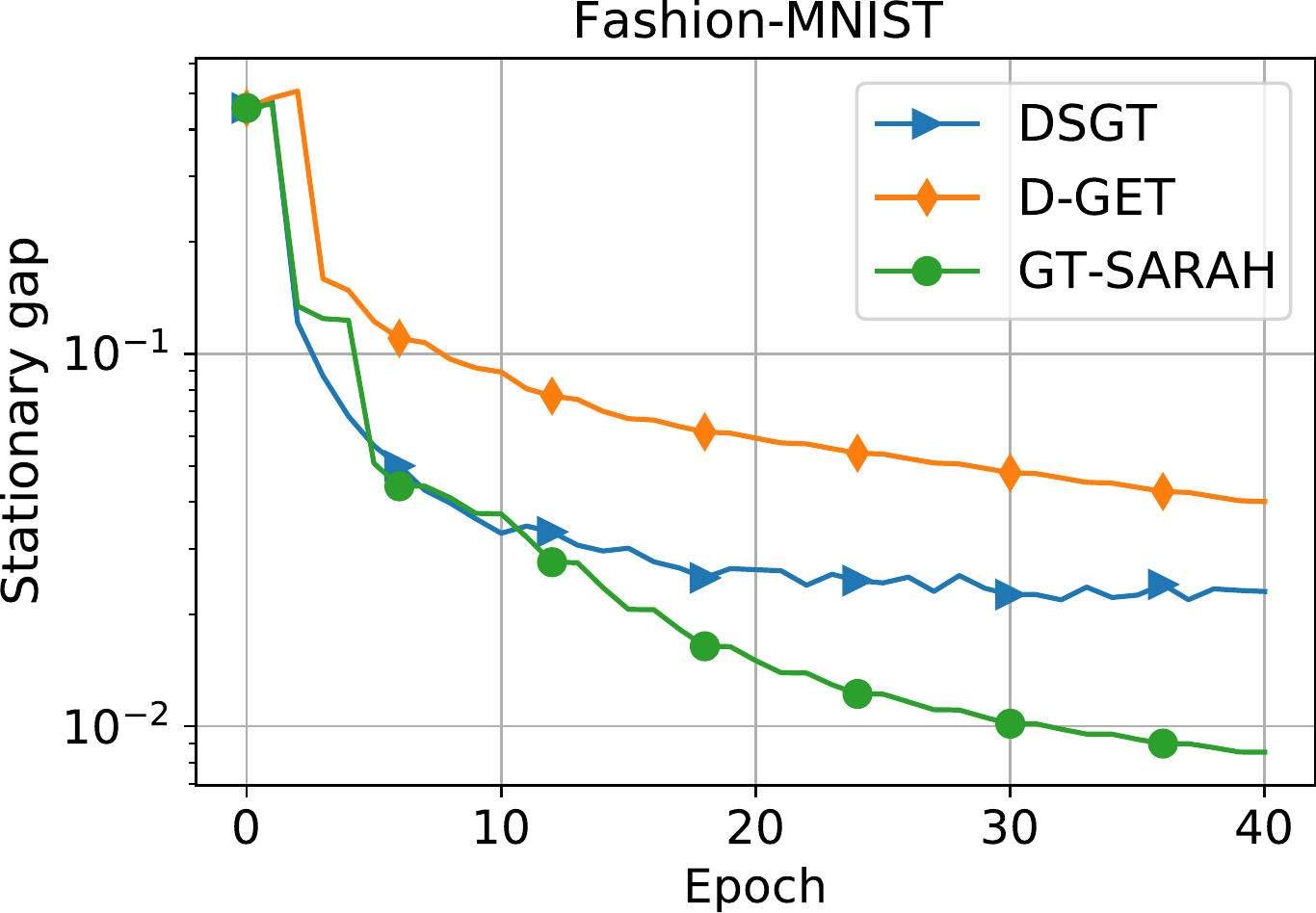}
\vspace{-0.7cm}
\caption{Performance comparison of \texttt{GT-SARAH}, \texttt{DSGT}, and \texttt{D-GET} over the~$10\times10$ grid graph on the w8a, a9a, and Fashion-MNIST dataset.}
\label{figure_comp_bignet}
\end{figure}}

\vspace{-0.3cm}
\section{Conclusions}\label{concl}
In this paper, we propose \texttt{GT-SARAH} to minimize a finite-sum of~$N$ smooth non-convex functions, equally distributed over a decentralized network of~$n$ nodes. With appropriate algorithmic parameters, \texttt{GT-SARAH} achieves significantly improved gradient complexity compared with the existing decentralized stochastic gradient methods. In particular, in a big-data regime~${n=\mc{O}(N^{\sfrac{1}{2}}(1-\lambda)^3)}$, the gradient complexity of \texttt{GT-SARAH} reduces to~${\mc{O}(N^{\sfrac{1}{2}}L\e^{-2})}$ which matches that of the centralized near-optimal variance-reduced methods such as \texttt{SPIDER}~\cite{SPIDER,spiderboost} and~\texttt{SARAH}~\cite{sarah_ncvx} for Problem~\eqref{P}, where~$L$ is the smoothness parameter and~${(1-\lambda)}$ is the spectral gap of the network weight matrix. Moreover, \texttt{GT-SARAH} in this regime achieves non-asymptotic linear speedup compared with the centralized near-optimal approaches that perform all gradient computations at a single node. Compared with the minibatch implementations of \texttt{SPIDER} and~\texttt{SARAH} over server-worker architectures~\cite{DVR_master_worker}, the decentralized \texttt{GT-SARAH} enjoys the same non-asymptotic linear speedup in terms of the gradient complexity, however, admits sparser and more flexible communication topology. 

\appendix

\section{Preliminaries of the convergence analysis}
In this section, we present the preliminaries for the proofs of the technical lemmas~\ref{GT},~\ref{DS_sum1},~\ref{vrlem},~\ref{consensus_bound1},~\ref{DS_sum2}. We first define the natural filtration associated with the probability space, an increasing family of sub-$\sigma$-algebras of~$\mc{F}$, as 
\begin{align*}
\mc{F}^{t,s} := \sigma\left(\sigma\big(\tau_{i,l}^{t-1,s}:i\in\mc{V},l\in[1,B]\big),~\mc{F}^{t-1,s}\right),\quad t\in[2,q+1],~s\geq 1,
\end{align*}
where~$\mc{F}^{1,s} :=: \mc{F}^{0,s} := \mc{F}^{q+1,s-1}, s\geq 2,$ and $\mc{F}^{1,1} :=: \mc{F}^{0,1} := \{\phi,\Xi\}$.
It can be verified by induction that~$\mb{x}^{t,s}$,~$\mb{y}^{t,s}$ are~$\mc{F}^{t,s}$-measurable, and $\mb{v}^{t,s}$ is $\mc{F}^{t+1,s}$-measurable,~$\forall s\geq 1$ and~$t\in[0,q]$.
We assume that the starting point~$\ol{\mb{x}}^{0,1}$ of~\GS~is a constant vector.
We next present some standard results in the context of decentralized optimization and gradient tracking methods.
The following lemma provides an upper bound on the difference between the exact global gradient and the average of local batch gradients in terms of the state consensus error, as a result of the~$L$-smoothness of each~$f_i$.
\begin{lemma}\label{Lbound}
$\b\|\ol{\nabla\mb{f}}(\mb{x}^{t,s})-\nabla F(\ol{\mb{x}}^{t,s})\b\|^2\leq\frac{L^2}{n}\b\|\mb{x}^{t,s}-\mb{J}\mb{x}^{t,s}\b\|^2$,~$\forall s\geq1$ and~$t\in[0,q]$.
\end{lemma}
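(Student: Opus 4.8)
The plan is to prove \cref{Lbound} by a direct computation that combines the definitions of the averaging operators with the convexity of the squared Euclidean norm and the $L$-smoothness of each $f_i$. First I would unfold the definitions: $\ol{\nabla\mb{f}}(\mb{x}^{t,s}) = \frac{1}{n}\sum_{i=1}^n \nabla f_i(\mb{x}_i^{t,s})$ while $\nabla F(\ol{\mb{x}}^{t,s}) = \frac{1}{n}\sum_{i=1}^n \nabla f_i(\ol{\mb{x}}^{t,s})$, since $\nabla F = \frac{1}{n}\sum_{i=1}^n \nabla f_i$ by \eqref{P}. Subtracting, the quantity to bound is $\big\|\frac{1}{n}\sum_{i=1}^n \big(\nabla f_i(\mb{x}_i^{t,s}) - \nabla f_i(\ol{\mb{x}}^{t,s})\big)\big\|^2$.

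Next I would apply Jensen's inequality (convexity of $\|\cdot\|^2$) to pull the average outside the norm, giving the bound $\frac{1}{n}\sum_{i=1}^n \big\|\nabla f_i(\mb{x}_i^{t,s}) - \nabla f_i(\ol{\mb{x}}^{t,s})\big\|^2$. Then, invoking the $L$-smoothness of each $f_i$ (which follows from \cref{smooth}, as noted in the excerpt right after that assumption), each summand is at most $L^2 \|\mb{x}_i^{t,s} - \ol{\mb{x}}^{t,s}\|^2$. Summing, this yields $\frac{L^2}{n}\sum_{i=1}^n \|\mb{x}_i^{t,s} - \ol{\mb{x}}^{t,s}\|^2$, and I would finish by recognizing that $\sum_{i=1}^n \|\mb{x}_i^{t,s} - \ol{\mb{x}}^{t,s}\|^2 = \|\mb{x}^{t,s} - \mb{J}\mb{x}^{t,s}\|^2$, since $\mb{J}\mb{x}^{t,s}$ stacks $n$ copies of $\ol{\mb{x}}^{t,s}$ and the Euclidean norm on $\mbb{R}^{np}$ decomposes blockwise.

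There is no real obstacle here — the lemma is a routine consequence of Jensen's inequality and Lipschitz continuity of the gradients, and the only bookkeeping is to keep the $\frac{1}{n}$ factors and the stacked-vector notation straight. If anything, the one point worth stating carefully is the passage from per-node Lipschitz bounds to the aggregate consensus-error form, which is purely notational given the definition $\mb{J} = \big(\tfrac{1}{n}\mb{1}_n\mb{1}_n^\top\big)\otimes\I_p$.
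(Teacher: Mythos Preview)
Your proposal is correct and follows essentially the same route as the paper's proof: expand the definitions, bound the squared norm of the average by the average of the squared norms, apply the $L$-smoothness of each $f_i$, and identify the result with $\tfrac{L^2}{n}\|\mb{x}^{t,s}-\mb{J}\mb{x}^{t,s}\|^2$. If anything, your write-up is slightly cleaner in that you explicitly justify the second step as Jensen's inequality, whereas the paper records it (apparently by typo) as an equality.
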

\begin{proof}
Observe that:~$\forall s\geq1$ and~$t\in[0,q]$,
\begin{align*}
\big\|\ol{\nabla\mb{f}}(\mb{x}^{t,s})-\nabla F\b(\ol{\mb{x}}^{t,s}\b)\big\|^2
=&~\tfrac{1}{n^2}\big\|\tsum_{i=1}^n\big(\nabla f_i\big(\mb{x}_i^{t,s}\big)-\nabla f_i\b(\ol{\x}^{t,s}\b)\big)\big\|^2 
\nonumber\\
=&~\tfrac{1}{n}\tsum_{i=1}^n\big\|\nabla f_i\big(\mb{x}_i^{t,s}\big)-\nabla f_i\b(\ol{\x}^{t,s}\b)\big\|^2 
\nonumber\\
\leq&~\tfrac{L^2}{n}\tsum_{i=1}^n\big\|\mb{x}_i^{t,s}-\ol{\x}^{t,s}\big\|^2,
\end{align*}
where the last line is due to the~$L$-smoothness of each~$f_i$. The proof is complete.
\end{proof}
The following are some standard inequalities on the state consensus error.
\begin{lemma}\label{contraction_consensus}
The following inequalities holds:~$\forall s\geq1$ and~$t\in[0,q]$,
\begin{align}
&\big\|\mb{x}^{t+1,s}-\mb{J}\mb{x}^{t+1,s}\big\|^2 \leq \tfrac{1+\lambda^2}{2} \big\|\mb{x}^{t,s}-\mb{J}\mb{x}^{t,s}\big\|^2 + \tfrac{2\alpha^2}{1-\lambda^2}\big\|\mb{y}^{t+1,s}-\mb{J}\mb{y}^{t+1,s}\big\|^2.\label{consensus1}\\
&\big\|\mb{x}^{t+1,s}-\mb{J}\mb{x}^{t+1,s}\big\|^2 \leq 2 \big\|\mb{x}^{t,s}-\mb{J}\mb{x}^{t,s}\big\|^2 + 2\alpha^2\big\|\mb{y}^{t+1,s}-\mb{J}\mb{y}^{t+1,s}\big\|^2.  \label{consensus2}
\end{align}
\end{lemma}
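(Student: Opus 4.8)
The statement to prove is \cref{contraction_consensus}, two standard consensus-contraction inequalities for the state update \eqref{gtsarah2}.

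\medskip

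The plan is to start from the state update \eqref{gtsarah2}, $\mb{x}^{t+1,s} = \mb{W}\mb{x}^{t,s} - \alpha\mb{y}^{t+1,s}$, and left-multiply by $(\I_{np} - \mb{J})$. Using the fact that $\mb{J}$ is the averaging projection and $\mb{W}\mb{J} = \mb{J}\mb{W} = \mb{J}$ (a consequence of the double-stochasticity in Assumption~\ref{network}), one gets $(\I-\mb{J})\mb{W}\mb{x}^{t,s} = (\mb{W}-\mb{J})\mb{x}^{t,s} = (\mb{W}-\mb{J})(\I-\mb{J})\mb{x}^{t,s}$, so that
\[
\mb{x}^{t+1,s} - \mb{J}\mb{x}^{t+1,s} = (\mb{W}-\mb{J})(\mb{x}^{t,s}-\mb{J}\mb{x}^{t,s}) - \alpha(\mb{y}^{t+1,s}-\mb{J}\mb{y}^{t+1,s}).
\]
Then I would invoke \cref{W}, which gives $\|(\mb{W}-\mb{J})(\mb{x}^{t,s}-\mb{J}\mb{x}^{t,s})\| = \|\mb{W}(\mb{x}^{t,s}-\mb{J}\mb{x}^{t,s}) - \mb{J}(\mb{x}^{t,s}-\mb{J}\mb{x}^{t,s})\| \le \lambda\|\mb{x}^{t,s}-\mb{J}\mb{x}^{t,s}\|$ (noting $\mb{J}(\mb{x}^{t,s}-\mb{J}\mb{x}^{t,s})=\bfzero$).

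\medskip

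For \eqref{consensus1}, apply the Young-type inequality $\|\mb a + \mb b\|^2 \le (1+\eta)\|\mb a\|^2 + (1+\eta^{-1})\|\mb b\|^2$ to the displayed decomposition with the choice $\eta = \frac{1-\lambda^2}{2\lambda^2}$, which is tuned so that $\lambda^2(1+\eta) = \frac{1+\lambda^2}{2}$ and $1+\eta^{-1} = \frac{1+\lambda^2}{1-\lambda^2} \le \frac{2}{1-\lambda^2}$; this yields exactly the first inequality. For \eqref{consensus2}, apply instead the basic bound $\|\mb a+\mb b\|^2 \le 2\|\mb a\|^2 + 2\|\mb b\|^2$ together with $\lambda^2 \le 1$, giving the second inequality immediately.

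\medskip

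There is no real obstacle here; the only things to be careful about are (i) correctly using the identities $\mb{W}\mb{J}=\mb{J}\mb{W}=\mb{J}$ and $\mb{J}^2=\mb{J}$ so that the consensus error can be written purely in terms of the deviation $\mb{x}^{t,s}-\mb{J}\mb{x}^{t,s}$ before applying \cref{W}, and (ii) choosing the Young parameter $\eta$ so that the contraction factor lands on the clean value $\frac{1+\lambda^2}{2}$ in \eqref{consensus1}. Both are routine. I would present the derivation of the displayed identity once and then deduce the two inequalities as the two stated parameter choices.
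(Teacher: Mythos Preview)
Your proposal is correct and follows essentially the same approach as the paper: derive the identity $\mb{x}^{t+1,s}-\mb{J}\mb{x}^{t+1,s} = \mb{W}\mb{x}^{t,s}-\mb{J}\mb{x}^{t,s} - \alpha(\mb{y}^{t+1,s}-\mb{J}\mb{y}^{t+1,s})$ from \eqref{gtsarah2} and the doubly-stochastic identities, then apply Young's inequality with $\eta=\frac{1-\lambda^2}{2\lambda^2}$ for \eqref{consensus1} and $\eta=1$ for \eqref{consensus2}, together with \cref{W}. Your handling of the constants (checking $\lambda^2(1+\eta)=\tfrac{1+\lambda^2}{2}$ and $1+\eta^{-1}\le\tfrac{2}{1-\lambda^2}$) matches the paper exactly.
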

\begin{proof}
Using~\eqref{gtsarah2} and the fact that~$\J\W = \J$, we have:~$\forall s\geq1$ and~$\forall t\in[0,q]$,
\begin{align}
\big\|\mb{x}^{t+1,s}-\J\mb{x}^{t+1,s}\big\|^2 
=&~ \big\|\W\mb{x}^{t,s} - \alpha\mb{y}^{t+1,s}-\J\big(\W\mb{x}^{t,s} - \alpha\mb{y}^{t+1,s}\big)\big\|^2 \nonumber\\
=&~ \big\|\W\mb{x}^{t,s} - \J\mb{x}^{t,s}-\alpha\big(\mb{y}^{t+1,s}-\J\mb{y}^{t+1,s}\big)\big\|^2
\label{consensus0}
\end{align}
We apply Young's inequality, $\|\mb{a}+\mb{b}\|^2\leq(1+\eta)\|\mb{a}\|^2+(1+\eta^{-1})\|\mb{b}\|^2,\forall\mb{a},\mb{b}\in\mathbb{R}^{np}$, $\forall\eta>0$, and Lemma~\ref{W} to~\eqref{consensus0} to obtain:~$\forall s\geq1$ and~$\forall t\in[0,q]$,
\begin{align*}
\big\|\mb{x}^{t+1,s}-\J\mb{x}^{t+1,s}\big\|^2
\leq&~\left(1+\eta\right)\lambda^2\big\|\mb{x}^{t,s} - \J\mb{x}^{t,s}\big\|^2
+\left(1+\eta^{-1}\right)\alpha^2\big\|\mb{y}^{t+1,s} - \J\mb{y}^{t+1,s}\big\|^2.
\end{align*}
Setting~$\eta$ as $\frac{1-\lambda^2}{2\lambda^2}$ and~$1$  respectively yields~\eqref{consensus1} and~\eqref{consensus2}. 
\end{proof}

\subsection{Proof of Lemma~\ref{GT}}\label{proof_GT}
Recall from Assumption~\ref{network} that~$\mb 1_n^\top \ul{\W} = \mb 1_n^\top$. We multiply~\eqref{gtsarah1} by~$\ave$ to obtain:~$\forall s\geq1$ and~$t\in[0,q]$,
\begin{align*}
\ol{\mb y}^{t+1,s} 
=&~\ol{\mb y}^{t,s} + \ol{\mb v}^{t,s} - \ol{\mb v}^{t-1,s} \\
=&~\ol{\mb y}^{t-1,s} + \ol{\mb v}^{t,s} - \ol{\mb v}^{t-2,s} \\
\cdots\\
=&~\ol{\mb y}^{0,s} + \ol{\mb v}^{t,s} - \ol{\mb v}^{-1,s} \\
=&~\ol{\mb y}^{q+1,s-1} + \ol{\mb v}^{t,s} - \ol{\mb v}^{q,s-1}\\
\cdots\\
=&~\ol{\mb y}^{0,1} + \ol{\mb v}^{t,s} - \ol{\mb v}^{-1,1} = \ol{\mb v}^{t,s},
\end{align*}
where the above series of equalities follows directly from the updates of~\GS.

\section{Proof of Lemma~\ref{DS_sum1}}\label{proof_DS_sum1}
We multiply~\eqref{gtsarah2} by~$\ave$ and then use Lemma~\ref{GT} to obtain the recursion of the mean state~$\ol{\mb{x}}^{t,s}$ as follows:
\begin{align*}
\ol{\mb{x}}^{t+1,s} 
= \ol{\mb{x}}^{t,s} - \alpha\ol{\mb y}^{t+1,s}
= \ol{\mb{x}}^{t,s} - \alpha\ol{\mb v}^{t,s},\qquad\forall s\geq1~\mbox{and}~t\in[0,q].
\end{align*}
Setting~$\y=\ol{\x}^{t+1,s}$ and~$\x=\ol{\x}^{t,s}$ in~\eqref{DL1}, we have:~$\forall s\geq1$ and $t\in[0,q]$,
\begin{align}\label{dl2}
F(\ol{\x}^{t+1,s}) \leq F(\ol{\x}^{t,s}) - \alpha\big\langle\nabla F(\ol{\x}^{t,s}),\ol{\v}^{t,s}\big\rangle
+ \tfrac{\alpha^2 L}{2}\b\|\ol{\v}^{t,s}\b\|^2.
\end{align}
Applying~$\langle \mb{a},\mb{b} \rangle = 0.5\left(\|\mb a\|^2 + \|\mb b\|^2 - \|\mb{a}-\mb{b}\|^2\right), \forall\mb{a},\mb{b}\in\mbb{R}^p$, to~\cref{dl2}, we obtain 
an inequality that characterizes the descent of the network mean state over one inner loop iteration:~$\forall s\geq1$ and~$t\in[0,q]$,
\begin{align}
F(\ol{\x}^{t+1,s}) \leq&~F(\ol{\x}^{t,s}) - \tfrac{\a}{2}\b\|\nabla F(\ol{\x}^{t,s})\b\|^2
- \tfrac{\alpha(1 - \alpha L)}{2}\b\|\ol{\v}^{t,s}\b\|^2 + \tfrac{\a}{2}\b\|\ol{\v}^{t,s}-\nabla F(\ol{\x}^{t,s})\b\|^2, \nonumber\\
\leq&~F(\ol{\x}^{t,s}) - \tfrac{\a}{2}\b\|\nabla F(\ol{\x}^{t,s})\b\|^2
- \tfrac{\a}{4}\b\|\ol{\v}^{t,s}\b\|^2 + \a\b\|\ol{\v}^{t,s}-\ol{\nabla\mb{f}}(\x^{t,s})\b\|^2 \n\\
&+ \a\b\|\ol{\nabla\mb{f}}(\x^{t,s})-\nabla F(\ol{\x}^{t,s})\b\|^2
\label{L30}\\
\leq&~F(\ol{\x}^{t,s}) - \tfrac{\a}{2}\b\|\nabla F(\ol{\x}^{t,s})\b\|^2
- \tfrac{\a}{4}\b\|\ol{\v}^{t,s}\b\|^2 + \a\b\|\ol{\v}^{t,s}-\ol{\nabla\mb{f}}(\x^{t,s})\b\|^2 \n\\
&+ \tfrac{\a L^2}{n}\b\|\mb{x}^{t,s}-\mb{J}\mb{x}^{t,s}\b\|^2, \label{L300}
\end{align}
where~\cref{L30} is due to~$0<\a\leq\frac{1}{2L}$ and~\cref{L300} is due to Lemma~\ref{Lbound}. We then take the telescoping sum of~\eqref{L300} over~$t$ from~$0$ to~$q$ to obtain:~$\forall s\geq1$,
\begin{align}\label{L31}
F(\ol{\x}^{0,s+1}) 
\leq&~F(\ol{\x}^{0,s}) - \tfrac{\a}{2}\tsum_{t=0}^{q}\|\nabla F(\ol{\x}^{t,s})\|^2
- \tfrac{\a}{4}\tsum_{t=0}^{q}\b\|\ol{\v}^{t,s}\b\|^2 \\ 
&+\a\tsum_{t=0}^{q}\b\|\ol{\v}^{t,s}-\ol{\nabla\mb{f}}(\x^{t,s})\b\|^2 
+ \tfrac{\a L^2}{n}\tsum_{t=0}^{q}\b\|\mb{x}^{t,s}-\J\mb{x}^{t,s}\b\|^2. \n
\end{align}
The proof then follows by taking the telescoping sum of~\eqref{L31} over~$s$ from~$1$ to~$S$ and taking the expectation of the resulting inequality. 

\section{Proof of Lemma~\ref{vrlem}}\label{proof_vrlem}
We first provide a useful result.
\begin{lemma}\label{tr1}
The following inequality holds:~$\forall s\geq1$,~$\forall t\in[1,q]$,~$\forall i\in\mc{V}$,~$\forall l\in[1,B]$,
\begin{align*}
\E\bigg[\Big\|\gf_{i,\tau_{i,l}^{t,s}}\big(\mb{x}_i^{t,s}\big)-\gf_{i,\tau_{i,l}^{t,s}}\big(\mb{x}_i^{t-1,s}\big)\Big\|^2\big|\F\bigg]\leq L^2 \big\|\mb{x}_i^{t,s} - \mb{x}_i^{t-1,s}\big\|^2.
\end{align*}
\end{lemma}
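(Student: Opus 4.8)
\textbf{Proof proposal for Lemma~\ref{tr1}.} The plan is to condition on~$\F = \mc{F}^{t,s}$, exploit that the two state iterates appearing inside the norm are~$\F$-measurable while the sampling index~$\tau_{i,l}^{t,s}$ is independent of~$\F$ and uniform on~$\{1,\dots,m\}$, and then invoke the mean-squared smoothness property~\eqref{d_smooth} from Assumption~\ref{smooth}. Concretely, I would first recall from the preliminaries that~$\mb{x}^{t,s}$ is~$\mc{F}^{t,s}$-measurable and that~$\mb{x}^{t-1,s}$ is~$\mc{F}^{t-1,s}$-measurable, hence also~$\mc{F}^{t,s}$-measurable since~$\mc{F}^{t-1,s}\subseteq\mc{F}^{t,s}$; in particular the block components~$\mb{x}_i^{t,s}$ and~$\mb{x}_i^{t-1,s}$ are both~$\F$-measurable for every~$i$.

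Next I would note that by the construction of the filtration,~$\mc{F}^{t,s} = \sigma\big(\sigma(\tau_{i,l}^{t-1,s}:\cdot),\mc{F}^{t-1,s}\big)$ does \emph{not} include any of the index random variables~$\{\tau_{i,l}^{t,s}\}$; combined with Assumption~\ref{sampling} (mutual independence of the whole family of sampling indices) and Step~6 of \cref{GT-SARAH} (each~$\tau_{i,l}^{t,s}$ is drawn uniformly from~$\{1,\dots,m\}$), it follows that, conditionally on~$\F$, the random index~$\tau_{i,l}^{t,s}$ is still uniformly distributed on~$\{1,\dots,m\}$ and is independent of the (now effectively deterministic) pair~$\mb{x}_i^{t,s},\mb{x}_i^{t-1,s}$. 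Therefore the conditional expectation collapses to an empirical average over~$j\in\{1,\dots,m\}$:
\begin{align*}
\E\Big[\big\|\gf_{i,\tau_{i,l}^{t,s}}\big(\mb{x}_i^{t,s}\big)-\gf_{i,\tau_{i,l}^{t,s}}\big(\mb{x}_i^{t-1,s}\big)\big\|^2\,\big|\,\F\Big]
= \frac{1}{m}\sum_{j=1}^m\big\|\nabla f_{i,j}\big(\mb{x}_i^{t,s}\big)-\nabla f_{i,j}\big(\mb{x}_i^{t-1,s}\big)\big\|^2.
\end{align*}

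Finally I would apply the mean-squared smoothness bound~\eqref{d_smooth} with~$\mb x = \mb{x}_i^{t,s}$ and~$\mb y = \mb{x}_i^{t-1,s}$ to bound the right-hand side by~$L^2\|\mb{x}_i^{t,s}-\mb{x}_i^{t-1,s}\|^2$, which is exactly the claim. I do not anticipate a genuine obstacle here; the only point requiring a little care is the clean justification that~$\tau_{i,l}^{t,s}$ remains uniform and independent after conditioning on~$\F$, which is where the precise definition of the filtration and Assumption~\ref{sampling} are used, and the fact that the tower/independence argument lets us replace the conditional expectation by the deterministic average over the~$m$ component functions.
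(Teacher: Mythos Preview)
Your proposal is correct and follows essentially the same route as the paper: the paper writes the random gradient difference as $\sum_{j=1}^m \mathbbm{1}\{\tau_{i,l}^{t,s}=j\}\big(\nabla f_{i,j}(\mb{x}_i^{t,s})-\nabla f_{i,j}(\mb{x}_i^{t-1,s})\big)$, uses $\F$-measurability of the iterates and $\E[\mathbbm{1}\{\tau_{i,l}^{t,s}=j\}\mid\F]=\tfrac{1}{m}$ to reduce the conditional expectation to $\tfrac{1}{m}\sum_{j=1}^m\|\nabla f_{i,j}(\mb{x}_i^{t,s})-\nabla f_{i,j}(\mb{x}_i^{t-1,s})\|^2$, and then applies~\eqref{d_smooth}. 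Your argument is the same in substance, just stated without the explicit indicator decomposition.
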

\begin{proof}
In the following, we denote~$\mathbbm{1}\{A\}$ as the indicator function of an event~${A\in\mc{F}}$. Observe that:~$\forall s\geq1$,~$\forall t\in[1,q]$,~$\forall i\in\mc{V}$,~$\forall l\in[1,B]$,
\begin{align}\label{tr10}
&\E\bigg[\Big\|\gf_{i,\tau_{i,l}^{t,s}}\big(\mb{x}_i^{t,s}\big)-\gf_{i,\tau_{i,l}^{t,s}}\big(\mb{x}_i^{t-1,s}\big)\Big\|^2\Big|\F\bigg] \n\\
=&~\E\!\left[\Bigg\|\sum_{j=1}^{m}\mathbbm{1}\big\{\tau_{i,l}^{t,s} = j\big\}\bigg(\gf_{i,j}\big(\mb{x}_i^{t,s}\big)-\gf_{i,j}\big(\mb{x}_i^{t-1,s}\big)\bigg)\Bigg\|^2\bigg|\F\right] \nonumber\\
=&~\sum_{j=1}^{m}\E\bigg[\mathbbm{1}\big\{\tau_{i,l}^{t,s} = j\big\}\Big\|\gf_{i,j}\big(\mb{x}_i^{t,s}\big)-\gf_{i,j}\big(\mb{x}_i^{t-1,s}\big)\Big\|^2\Big|\F\bigg] \nonumber\\
=&~\sum_{j=1}^{m}\E\Big[\mathbbm{1}\big\{\tau_{i,l}^{t,s} = j\big\}\big|\F\Big]\Big\|\gf_{i,j}\big(\mb{x}_i^{t,s}\big)-\gf_{i,j}\big(\mb{x}_i^{t-1,s}\big)\Big\|^2 \nonumber\\
=&~\frac{1}{m}\sum_{j=1}^{m}\Big\|\gf_{i,j}\big(\mb{x}_i^{t,s}\big)-\gf_{i,j}\big(\mb{x}_i^{t-1,s}\big)\Big\|^2, \n
\end{align}
where the last line uses that~$\tau_{i,l}^{t,s}$ is independent of~$\F$, i.e.,~${\E\big[\mathbbm{1}\{\tau_{i,l}^{t,s} = j\}|\F\big] =  \frac{1}{m}}$. The proof follows by using Assumption~\ref{smooth}. 
\end{proof}

\newpage
Next, we derive an upper bound on the estimation error of the average of local \texttt{SARAH} gradient estimators across the nodes at each inner loop iteration.
\begin{lemma}\label{vr00}
The following inequality holds:~$\forall s\geq1$ and~$t\in[1,q]$,
\begin{align*}
\E\Big[\big\|\ol{\v}^{t,s} - \ol{\nabla\mb{f}}(\x^{t,s}) \big\|^2\Big] \leq \frac{3\a^2L^2}{nB}\sum_{u=0}^{t-1}\E\Big[\b\|\ol{\v}^{u,s}\b\|^2\Big] + \frac{6L^2}{n^2B}\sum_{u=0}^{t}\E\Big[\big\|\x^{u,s} - \mb{J}\x^{u,s}\big\|^2\Big].
\end{align*}
\end{lemma}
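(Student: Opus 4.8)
The plan is to unroll the \texttt{SARAH} recursion for the network-averaged estimator~$\ol{\v}^{t,s}$ and exploit the martingale-difference structure created by the uniform sampling. Recall from~\cref{GT-SARAH} (in vector form) that~$\ol{\v}^{t,s} = \ol{\v}^{t-1,s} + \frac{1}{nB}\sum_{i=1}^n\sum_{l=1}^B\big(\gf_{i,\tau_{i,l}^{t,s}}(\x_i^{t,s}) - \gf_{i,\tau_{i,l}^{t,s}}(\x_i^{t-1,s})\big)$, and since~$\tau_{i,l}^{t,s}$ is uniform on~$\{1,\dots,m\}$ and independent of~$\F$, the conditional expectation of the increment is exactly~$\ol{\nabla\mb f}(\x^{t,s}) - \ol{\nabla\mb f}(\x^{t-1,s})$. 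Writing~$\mb e^{t,s} := \ol{\v}^{t,s} - \ol{\nabla\mb f}(\x^{t,s})$, the first step is to derive the telescoping identity~$\mb e^{t,s} = \sum_{u=1}^{t}\bds\xi^{u,s}$, where~$\bds\xi^{u,s}$ is the centered sampling noise at inner iteration~$u$ (using~$\mb e^{0,s}=\bfzero$ because~$\v^{0,s}=\nabla\mb f(\x^{0,s})$ is the exact batch gradient). Because~$\{\tau_{i,l}^{u,s}\}$ across~$u$ are mutually independent (Assumption~\ref{sampling}) and each~$\bds\xi^{u,s}$ is~$\F[u+1,s]$-measurable with~$\E[\bds\xi^{u,s}\mid\F[u,s]]=\bfzero$, the cross terms vanish and~$\E[\|\mb e^{t,s}\|^2] = \sum_{u=1}^{t}\E[\|\bds\xi^{u,s}\|^2]$.

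Next I would bound each~$\E[\|\bds\xi^{u,s}\|^2]$. Expanding~$\bds\xi^{u,s} = \frac{1}{nB}\sum_{i,l}\big(\gf_{i,\tau_{i,l}^{u,s}}(\x_i^{u,s}) - \gf_{i,\tau_{i,l}^{u,s}}(\x_i^{u-1,s})\big) - \big(\ol{\nabla\mb f}(\x^{u,s})-\ol{\nabla\mb f}(\x^{u-1,s})\big)$ and using independence of the samples across~$(i,l)$ conditioned on~$\F[u,s]$, the variance factorizes: the~$1/(nB)$ normalization gives~$\E[\|\bds\xi^{u,s}\|^2\mid\F[u,s]] \le \frac{1}{n^2 B^2}\sum_{i,l}\E\big[\|\gf_{i,\tau_{i,l}^{u,s}}(\x_i^{u,s})-\gf_{i,\tau_{i,l}^{u,s}}(\x_i^{u-1,s})\|^2\mid\F[u,s]\big]$ (dropping the subtracted mean only helps). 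Applying Lemma~\ref{tr1} to each summand bounds this by~$\frac{L^2}{n^2 B}\cdot\frac{1}{n}\sum_{i=1}^n\|\x_i^{u,s}-\x_i^{u-1,s}\|^2 = \frac{L^2}{n^2 B}\cdot\frac{\|\x^{u,s}-\x^{u-1,s}\|^2}{n}$. Here I have~$n B$ independent terms and a~$1/(nB)^2$ prefactor, which is where the~$\frac{1}{nB}$ improvement over a naive bound comes from.

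The final step is to control~$\|\x^{u,s}-\x^{u-1,s}\|^2$. Using the state update~\eqref{gtsarah2}, $\x^{u,s}-\x^{u-1,s} = (\mb W - \I)\x^{u-1,s} - \alpha\mb y^{u,s}$; since~$(\mb W-\I)\mb J = \bfzero$ one can write~$(\mb W-\I)\x^{u-1,s} = (\mb W-\I)(\x^{u-1,s}-\mb J\x^{u-1,s})$, and similarly split~$\mb y^{u,s} = (\mb y^{u,s}-\mb J\mb y^{u,s}) + \mb J\mb y^{u,s}$ with~$\mb J\mb y^{u,s}$ corresponding to~$\ol{\v}^{u-1,s}$ via Lemma~\ref{GT}. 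A Young-type split then yields~$\|\x^{u,s}-\x^{u-1,s}\|^2 \le c_1\big(\|\x^{u,s}-\mb J\x^{u,s}\|^2 + \|\x^{u-1,s}-\mb J\x^{u-1,s}\|^2\big) + c_2\alpha^2\|\ol{\v}^{u-1,s}\|^2$ for universal constants; summing over~$u\in[1,t]$ and matching constants produces the claimed coefficients~$\frac{3\alpha^2 L^2}{nB}$ on~$\sum_u\E[\|\ol{\v}^{u,s}\|^2]$ and~$\frac{6L^2}{n^2 B}$ on~$\sum_u\E[\|\x^{u,s}-\mb J\x^{u,s}\|^2]$. The main obstacle is the bookkeeping in this last step: one must be careful that the consensus-error term appears as~$\|\x^{u,s}-\mb J\x^{u,s}\|^2$ rather than the displacement~$\|\mb W-\I\|^2\|\x^{u-1,s}-\mb J\x^{u-1,s}\|^2$ with a loose constant, and that the~$\ol{\v}$ terms are correctly indexed (note the sum over~$\ol{\v}^{u,s}$ runs to~$t-1$, reflecting that the displacement at step~$u$ involves~$\ol{\v}^{u-1,s}$); keeping the constants at~$3$ and~$6$ requires choosing the Young parameters deliberately rather than generously.
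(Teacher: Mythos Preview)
Your plan is essentially the paper's: unroll the \texttt{SARAH} recursion into a martingale-difference sum, use independence across nodes and minibatch indices to extract the $1/(nB)$ factor, apply Lemma~\ref{tr1}, and then bound $\|\x^{u,s}-\x^{u-1,s}\|^2$ in terms of consensus errors and~$\|\ol{\v}^{u-1,s}\|^2$. Two points where the paper is cleaner than your write-up. First, there is an arithmetic slip: after Lemma~\ref{tr1} the bound is $\frac{L^2}{n^2B}\,\|\x^{u,s}-\x^{u-1,s}\|^2$, not $\frac{L^2}{n^2B}\cdot\frac{1}{n}\|\x^{u,s}-\x^{u-1,s}\|^2$ (you have $nB$ terms each~$\le L^2\|\x_i^{u,s}-\x_i^{u-1,s}\|^2$, divided by~$(nB)^2$). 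Second, for the displacement bound the paper does \emph{not} go through the update~\eqref{gtsarah2}; it simply inserts~$\J\x^{u,s}$ and~$\J\x^{u-1,s}$ to write
\[
\x^{u,s}-\x^{u-1,s} = \big(\x^{u,s}-\J\x^{u,s}\big) + \big(\J\x^{u,s}-\J\x^{u-1,s}\big) + \big(\J\x^{u-1,s}-\x^{u-1,s}\big),
\]
uses $\|a+b+c\|^2\le 3(\|a\|^2+\|b\|^2+\|c\|^2)$, and notes $\|\J\x^{u,s}-\J\x^{u-1,s}\|^2 = n\a^2\|\ol{\v}^{u-1,s}\|^2$. This immediately gives the constants~$3$ and, after merging the two consensus sums over~$u$,~$6$. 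Your route through~\eqref{gtsarah2} produces an $\a^2\|\y^{u,s}-\J\y^{u,s}\|^2$ term rather than $\|\x^{u,s}-\J\x^{u,s}\|^2$; it can be eliminated (since $\a(\y^{u,s}-\J\y^{u,s}) = (\W-\J)\x^{u-1,s}-(\x^{u,s}-\J\x^{u,s})$), but the direct add-and-subtract avoids the detour.
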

\begin{proof}
For the ease of exposition, we denote:~$\forall t\in[1,q]$, $\forall s\geq1$, $\forall i\in\mc{V}$, $\forall l\in[1,B]$, 
\begin{align}\label{notation_v}
\wh{\nabla}_{i,l}^{t,s} := \gf_{i,\tau_{i,l}^{t,s}}\big(\x_{i}^{t,s}\big) - \gf_{i,\tau_{i,l}^{t,s}}\big(\x_{i}^{t-1,s}\big), 
\qquad
\wh{\nabla}_i^{t,s} := \tfrac{1}{B}\tsum_{l=1}^B\wh{\nabla}_{i,l}^{t,s}.    
\end{align}
Since~$\x_{i}^{t,s}$ and~$\x_{i}^{t-1,s}$ are~$\F$-measurable, we have
\begin{align}\label{track}
\E\Big[\wh{\nabla}_{i,l}^{t,s}|\F\Big]  
= \E\Big[\wh{\nabla}_i^{t,s}|\F\Big] 
= \nabla f_i\big(\x_i^{t,s}\big) - \nabla f_i\big(\x_i^{t-1,s}\big).
\end{align}
With the notations in~\eqref{notation_v}, the local recursive update of the gradient estimator~$\mb{v}_i^{t,s}$ described in~\cref{GT-SARAH} may be written as 
\begin{align*}
\mb{v}_i^{t,s} = \wh{\nabla}_{i}^{t,s} + \mb{v}_i^{t-1,s}, \qquad
\forall t\in[1,q],~\forall s\geq1,~\forall i\in\mc{V}.
\end{align*}
In the light of~\eqref{track}, we have the following:~$\forall s\geq1$ and~$t\in[1,q]$,
\begin{align}\label{vr1}
&\E\Big[\big\|\ol{\v}^{t,s} - \ol{\nabla\mb{f}}(\x^{t,s}) \big\|^2 \big| \F\Big]\nonumber\\
=&~\E\Bigg[\bigg\|\frac{1}{n}\sum_{i=1}^n\Big(\wh{\nabla}_i^{t,s} + \mb{v}_{i}^{t-1,s} - \nabla f_i\big(\x_i^{t,s}\big)\Big) \bigg\|^2 \bigg| \F\Bigg] \n\\
=&~\E\Bigg[\bigg\|\frac{1}{n}\sum_{i=1}^n\Big(\wh{\nabla}_i^{t,s} - \nabla f_i\big(\x_i^{t,s}\big) + \nabla f_i\big(\x_i^{t-1,s}\big) + \mb{v}_{i}^{t-1,s} - \nabla f_i\big(\x_i^{t-1,s}\big)\Big) \bigg\|^2 \bigg| \F\Bigg] \nonumber\\
=&~\E\Bigg[\bigg\|\frac{1}{n}\sum_{i=1}^n\Big(\wh{\nabla}_i^{t,s} - \nabla f_i\big(\x_i^{t,s}\big) + \nabla f_i\big(\x_i^{t-1,s}\big)\Big)\bigg\|^2 \bigg| \F\Bigg] \n\\
&+\bigg\|\frac{1}{n}\sum_{i=1}^n\Big(\mb{v}_{i}^{t-1,s} - \nabla f_i\big(\x_i^{t-1,s}\big)\Big) \bigg\|^2 \n\\
=&~\E\Bigg[\bigg\|\frac{1}{n}\sum_{i=1}^n\Big(\wh{\nabla}_i^{t,s} - \nabla f_i\big(\x_i^{t,s}\big) + \nabla f_i\big(\x_i^{t-1,s}\big)\Big)\bigg\|^2 \bigg| \F\Bigg] \n\\ 
&+\big\|\ol{\v}^{t-1,s} - \ol{\nabla\mb{f}}(\x^{t-1,s}) \big\|^2, 
\end{align}
where the third equality is due to~\eqref{track} and the fact that~$\sum_{i=1}^n\big(\mb{v}_{i}^{t-1,s} - \nabla f_i(\x_i^{t-1,s})\big)$ is~$\F$-measurable. To proceed from~\eqref{vr1}, we note that since the collection of random variables~$\big\{\tau_{i,l}^{t,s}:i\in\mc{V},l\in[1,B]\big\}$ are independent of each other and of the filtration~$\F$, by~\eqref{track}, we have:~$\forall t\in[1,q]$ and~$s\geq1$,
\begin{align}\label{Indep1}
\E\Big[\Big\langle\wh{\nabla}_i^{t,s} - \nabla f_i\big(\x_i^{t,s}\big) + \nabla f_i\big(\x_i^{t-1,s}\big), \wh{\nabla}_r^{t,s} - \nabla f_r(\x_r^{t,s}\big) + \nabla f_r\big(\x_r^{t-1,s}\big)\Big\rangle\Big|\F\Big] = 0,
\end{align}
whenever~$i,r\in\mc{V}$ such that~$i\neq r$. Similarly, we have:~$\forall t\in[1,q]$ and~$s\geq1$,~$\forall i\in\mc{V}$,
\begin{align}\label{Indep2}
\E\Big[\Big\langle\wh{\nabla}_{i,l}^{t,s} - \nabla f_i\big(\x_i^{t,s}\big) + \nabla f_i\big(\x_i^{t-1,s}\big), \wh{\nabla}_{i,h}^{t,s} - \nabla f_i(\x_i^{t,s}\big) + \nabla f_i\big(\x_i^{t-1,s}\big)\Big\rangle\Big|\F\Big] = 0,
\end{align}
whenever~$l,h\in[1,m]$ such that~$l\neq h$.
With the help of~\eqref{Indep1} and~\eqref{Indep2}, we may simplify~\eqref{vr1} in the following:~$\forall s\geq1$ and $t\in[1,q]$,
\begin{align}\label{vr3_0}
&\E\Big[\b\|\ol{\v}^{t,s} - \ol{\nabla\mb{f}}(\x^{t,s}) \b\|^2 \big| \F\Big] \n\\
=&~\frac{1}{n^2}\sum_{i=1}^n\E\bigg[\Big\|\wh{\nabla}_i^{t,s} - \nabla f_i\big(\x_i^{t,s}\big) + \nabla f_i\big(\x_i^{t-1,s}\big)\Big\|^2 \Big| \F\bigg] 
+ \big\|\ol{\v}^{t-1,s} - \ol{\nabla\mb{f}}\big(\x^{t-1,s}\big)\big\|^2 \n\\
=&~\frac{1}{n^2}\sum_{i=1}^n\E\Bigg[\bigg\|\frac{1}{B}\sum_{l=1}^B\Big(\wh{\nabla}_{i,l}^{t,s} - \nabla f_i\big(\x_i^{t,s}\big) + \nabla f_i\big(\x_i^{t-1,s}\big)\Big)\bigg\|^2 \Big| \F\Bigg] \nonumber\\
&+\left\|\ol{\v}^{t-1,s} - \ol{\nabla\mb{f}}\big(\x^{t-1,s}\big) \right\|^2 \n\\
=&~\frac{1}{(nB)^2}\sum_{i=1}^n\sum_{l=1}^B\E\bigg[\Big\|\wh{\nabla}_{i,l}^{t,s} - \nabla f_i\big(\x_i^{t,s}\big) + \nabla f_i\big(\x_i^{t-1,s}\big)\Big\|^2 \Big| \F\bigg] \\
&+\left\|\ol{\v}^{t-1,s} - \ol{\nabla\mb{f}}\big(\x^{t-1,s}\big) \right\|^2, \n
\end{align}
where the first line is due to~\eqref{Indep1} and the last line is due to~\eqref{Indep2}. To proceed from~\eqref{vr3_0}, we observe that~$\forall t\in[1,q]$, $\forall s\geq1$, $\forall i\in\mc{V}$, $\forall l\in[1,B]$,
\begin{align}\label{vrd}
\E\bigg[\Big\|\wh{\nabla}_{i,l}^{t,s} - \nabla f_i(\x_i^{t,s}) + \nabla f_i(\x_i^{t-1,s})\Big\|^2 \Big| \F\bigg]
=&~\E\bigg[\Big\|\wh{\nabla}_{i,l}^{t,s} - \E\Big[\wh{\nabla}_{i,l}^{t,s}|\F\Big]\Big\|^2 \big| \F\bigg] \n\\
\leq&~\E\bigg[\Big\|\wh{\nabla}_{i,l}^{t,s}\Big\|^2 | \F\bigg] \n\\
\leq&~L^2 \big\|\mb{x}_i^{t,s} - \mb{x}_i^{t-1,s}\big\|^2,
\end{align}
where the last line uses~\cref{tr1}.
Applying~\eqref{vrd} to~\eqref{vr3_0} yields:~$\forall s\geq1,t\in[1,q]$, 
\begin{align}\label{vr3}
\E\Big[\b\|\ol{\v}^{t,s} - \ol{\nabla\mb{f}}(\x^{t,s})\b\|^2 \b|\F\Big]
\leq \tfrac{L^2}{n^2B}\b\|\x^{t,s} - \x^{t-1,s}\b\|^2 + \big\|\ol{\v}^{t-1,s} - \ol{\nabla\mb{f}}(\x^{t-1,s}) \big\|^2.
\end{align}
We next bound the first term on the right hand side of~\eqref{vr3}. Observe that~$\forall s\geq1$ and~$t\in[1,q+1]$,
\begin{align}\label{xdiff}
\big\|\x^{t,s} - \x^{t-1,s}\big\|^2 
=&~\big\|\x^{t,s} - \mb{J}\x^{t,s} + \mb{J}\x^{t,s} - \mb{J}\x^{t-1,s} + \mb{J}\x^{t-1,s}- \x^{t-1,s}\big\|^2 \nonumber\\
\leq&~3\big\|\x^{t,s} - \mb{J}\x^{t,s}\big\|^2 + 3n\big\|\ol{\x}^{t,s}-\ol{\x}^{t-1,s}\big\|^2
 + 3\big\|\x^{t-1,s}-\mb{J}\x^{t-1,s}\big\|^2 \nonumber\\
 =&~3\big\|\x^{t,s} - \mb{J}\x^{t,s}\big\|^2 + 3n\a^2\big\|\ol{\v}^{t-1,s}\big\|^2
 + 3\big\|\x^{t-1,s}-\mb{J}\x^{t-1,s}\big\|^2.
\end{align}
Applying~\eqref{xdiff} to~\eqref{vr3} and taking the expectation of the resulting inequality leads to:~$\forall s\geq1$ and~$t\in[1,q]$,
\begin{align}\label{vr4}
\E\Big[\b\|\ol{\v}^{t,s} - \ol{\nabla\mb{f}}\big(\x^{t,s}\big) \b\|^2\Big] 
\leq&~\E\Big[\b\|\ol{\v}^{t-1,s} - \ol{\nabla\mb{f}}\big(\x^{t-1,s}\big)\b\|^2\Big] 
+ \tfrac{3\a^2L^2}{nB}\E\Big[\b\|\ol{\v}^{t-1,s}\b\|^2\Big]\nonumber\\
&+ \tfrac{3L^2}{n^2B}\E\Big[\b\|\x^{t,s} - \mb{J}\x^{t,s}\b\|^2\Big] 
+ \tfrac{3L^2}{n^2B}\E\Big[\b\|\x^{t-1,s}-\mb{J}\x^{t-1,s}\b\|^2\Big].
\end{align}
We recall the initialization of each inner loop that~$\ol{\v}^{0,s} = \ol{\nabla\mb{f}}(\x^{0,s}),\forall s\geq1$, and take the telescoping sum of~\eqref{vr4} over~$t$ from~$1$ to~$z$ to obtain:~$\forall s\geq1$ and~$\forall z\in[1,q]$,
\begin{align}\label{vr6}
\E\Big[\b\|\ol{\v}^{z,s} - \ol{\nabla\mb{f}}(\x^{z,s}) \b\|^2\Big] 
\leq&~\tfrac{3\a^2L^2}{nB}\tsum_{t=1}^{z}\E\Big[\b\|\ol{\v}^{t-1,s}\b\|^2\Big] + \tfrac{3L^2}{n^2B}\tsum_{t=1}^{z}\E\Big[\b\|\x^{t,s} - \mb{J}\x^{t,s}\b\|^2\Big] \nonumber\\
&+\tfrac{3L^2}{n^2B}\tsum_{t=1}^{z}\E\Big[\b\|\x^{t-1,s}-\mb{J}\x^{t-1,s}\b\|^2\Big].
\end{align}
The proof follows by merging the last two terms on the right hand side of~\eqref{vr6}.
\end{proof}
\begin{L4}
Summing up Lemma~\ref{vr00} over~$t$ from~$1$ to~$q$ gives:~$\forall s\geq1$,
\begin{align}\label{addon1}
\sum_{t=1}^{q}\E\Big[\b\|\ol{\v}^{t,s} - \ol{\nabla\mb{f}}(\x^{t,s})\b\|^2\Big] 
\leq&~\frac{3\a^2L^2}{nB}\sum_{t=1}^{q}\sum_{u=0}^{t-1}\E\big[\|\ol{\v}^{u,s}\|^2\big] \nonumber\\
&+ \frac{6L^2}{n^2B}\sum_{t=1}^{q}\sum_{u=0}^{t}\E\Big[\b\|\x^{u,s} - \mb{J}\x^{u,s}\b\|^2\Big]. 
\end{align}
The proof follows by relaxing the right hand side of~\eqref{addon1} on the summations and the initialization of each inner loop that~$\ol{\v}^{0,s} = \ol{\nabla\mb{f}}(\x^{0,s}),\forall s\geq1$.
\end{L4}

\section{Proof of Lemma~\ref{consensus_bound1}}\label{proof_consensus_bound1}
\subsection{Gradient tracking error}
We first provide some useful bounds on the gradient estimator tracking errors. These bounds will later be coupled with~\cref{consensus1} to formulate a dynamical system to characterize the error evolution of~\GS. The following lemma establishes an upper bound on the sum of the local gradient estimation errors across the nodes. Its proof is similar to that of Lemma~\ref{vr00}.
\begin{lemma}\label{vrs}
The following inequality holds~$\forall s\geq1$ and~$t\in[1,q]$,
\begin{align*}
\E\Big[\big\|\v^{t,s} - \nabla\mb{f}(\x^{t,s}) \big\|^2\Big] \leq \frac{3n\a^2L^2}{B}\sum_{u=0}^{t-1}\E\Big[\b\|\ol{\v}^{u,s}\b\|^2\Big] + \frac{6L^2}{B}\sum_{u=0}^{t}\E\Big[\big\|\x^{u,s} - \mb{J}\x^{u,s}\big\|^2\Big]. 
\end{align*}
\end{lemma}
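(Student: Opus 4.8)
The plan is to prove this by mirroring the argument for Lemma~\ref{vr00}, but working directly with the stacked vectors $\v^{t,s}$ and $\nabla\mb{f}(\x^{t,s})$ rather than with their network averages. The key simplification is that, unlike in Lemma~\ref{vr00} where $\ol{\v}^{t,s}=\tfrac1n\sum_{i=1}^{n}\v_i^{t,s}$ forced a cancellation of cross-node inner products, here $\|\v^{t,s}-\nabla\mb{f}(\x^{t,s})\|^2=\sum_{i=1}^{n}\|\v_i^{t,s}-\gf_i(\x_i^{t,s})\|^2$ is already a sum of per-node squared norms, so the cross-node identity~\eqref{Indep1} is not needed and only the within-node conditional independence of the samples enters.

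First I would fix a node $i$, recall from~\eqref{notation_v} the notation $\wh{\nabla}_i^{t,s},\wh{\nabla}_{i,l}^{t,s}$ and the local recursion $\v_i^{t,s}=\wh{\nabla}_i^{t,s}+\v_i^{t-1,s}$, and split
$$\v_i^{t,s}-\gf_i(\x_i^{t,s})=\big(\wh{\nabla}_i^{t,s}-\gf_i(\x_i^{t,s})+\gf_i(\x_i^{t-1,s})\big)+\big(\v_i^{t-1,s}-\gf_i(\x_i^{t-1,s})\big),$$
where by~\eqref{track} the first bracket is conditionally mean zero given $\F$ while the second bracket is $\F$-measurable, so the cross term vanishes under $\E[\cdot|\F]$ and
$$\E\big[\|\v_i^{t,s}-\gf_i(\x_i^{t,s})\|^2\,\big|\,\F\big]=\E\big[\|\wh{\nabla}_i^{t,s}-\gf_i(\x_i^{t,s})+\gf_i(\x_i^{t-1,s})\|^2\,\big|\,\F\big]+\|\v_i^{t-1,s}-\gf_i(\x_i^{t-1,s})\|^2.$$
Expanding $\wh{\nabla}_i^{t,s}=\tfrac1B\sum_{l=1}^{B}\wh{\nabla}_{i,l}^{t,s}$ and using that $\{\tau_{i,l}^{t,s}\}_{l=1}^{B}$ are mutually independent and independent of $\F$ (the computation behind~\eqref{Indep2}) reduces the first term on the right to $\tfrac1{B^2}\sum_{l=1}^{B}\E[\|\wh{\nabla}_{i,l}^{t,s}-\gf_i(\x_i^{t,s})+\gf_i(\x_i^{t-1,s})\|^2|\F]$; bounding each summand by $\E[\|\wh{\nabla}_{i,l}^{t,s}\|^2|\F]\le L^2\|\x_i^{t,s}-\x_i^{t-1,s}\|^2$, exactly as in~\eqref{vrd} via Lemma~\ref{tr1}, gives $\E[\|\v_i^{t,s}-\gf_i(\x_i^{t,s})\|^2|\F]\le\tfrac{L^2}{B}\|\x_i^{t,s}-\x_i^{t-1,s}\|^2+\|\v_i^{t-1,s}-\gf_i(\x_i^{t-1,s})\|^2$. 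Summing over $i\in\mc{V}$ then yields the unaveraged counterpart of~\eqref{vr3}, namely
$$\E\big[\|\v^{t,s}-\nabla\mb{f}(\x^{t,s})\|^2\,\big|\,\F\big]\le\tfrac{L^2}{B}\|\x^{t,s}-\x^{t-1,s}\|^2+\|\v^{t-1,s}-\nabla\mb{f}(\x^{t-1,s})\|^2.$$

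From there I would substitute the already-established bound~\eqref{xdiff} for $\|\x^{t,s}-\x^{t-1,s}\|^2$, which contributes the term $3n\a^2\|\ol{\v}^{t-1,s}\|^2$, take full expectations, telescope the resulting one-step recursion over $t$, and use the inner-loop initialization $\v^{0,s}=\nabla\mb{f}(\x^{0,s})$; merging the two consensus-error sums and relaxing the summation ranges produces the stated $\tfrac{6L^2}{B}\sum_{u=0}^{t}\E[\|\x^{u,s}-\mb{J}\x^{u,s}\|^2]$ and $\tfrac{3n\a^2L^2}{B}\sum_{u=0}^{t-1}\E[\|\ol{\v}^{u,s}\|^2]$. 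I do not expect a genuine obstacle: the right-hand side here is in fact exactly $n^2$ times that of Lemma~\ref{vr00}, and the proof is essentially a rescaled repeat of it. The only points that require care are that the orthogonal (martingale-type) decomposition and the within-node independence must be invoked at the per-node level before summing over $i$, and that the factor $n$ multiplying the $\|\ol{\v}\|^2$ term comes straight from the $3n\a^2$ coefficient in~\eqref{xdiff}.
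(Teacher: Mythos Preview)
Your proposal is correct and follows essentially the same route as the paper's own proof: both start from the per-node recursion, use the martingale-type decomposition together with the within-node independence~\eqref{Indep2} and Lemma~\ref{tr1} to get $\E[\|\v_i^{t,s}-\gf_i(\x_i^{t,s})\|^2|\F]\le\tfrac{L^2}{B}\|\x_i^{t,s}-\x_i^{t-1,s}\|^2+\|\v_i^{t-1,s}-\gf_i(\x_i^{t-1,s})\|^2$, then sum over $i$, apply~\eqref{xdiff}, telescope using $\v^{0,s}=\nabla\mb{f}(\x^{0,s})$, and merge the consensus sums. Your observation that the cross-node identity~\eqref{Indep1} is unnecessary here is exactly the distinction the paper exploits as well.
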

\begin{proof}
See~\cref{svrs}. 
\end{proof}
We note that Lemma~\ref{vrs} does not follow directly from the results of Lemma~\ref{vrlem} because~$\v_i^{t,s}$ is \textit{not} a conditionally unbiased estimator of~$\nabla f_i(\mb{x}_i^{t,s})$ with respect to~$\mc{F}^{t,s}$.
With~\cref{vrs} at hand, we now quantify the gradient tracking errors.
\begin{lemma}\label{gt_contrc}
We have the following three statements.

(i) It holds that~$\big\|\mb{y}^{1,1}-\mb{J}\mb{y}^{1,1}\big\|^2
\leq\big\|\nabla\mb{f}\big(\x^{0,1}\big)\big\|^2$.

(ii) If~$0<\a\leq\frac{1-\lambda^2}{4\sqrt{3}L}$, the following inequality holds:~$\forall s\geq1$ and~$t\in[1,q]$,
\begin{align*}
\mathbb{E}\bigg[\frac{\|\mb{y}^{t+1,s}-\mb{J}\mb{y}^{t+1,s}\|^2}{nL^2}\bigg] \leq&~\frac{3+\lambda^2}{4}\mathbb{E}\bigg[\frac{\|\mb{y}^{t,s}-\mb{J}\mb{y}^{t,s}\|^2}{nL^2}\bigg] \nonumber\\
&+ \frac{18}{1-\lambda^2}\E\bigg[\frac{\|\x^{t-1,s}-\mb{J}\x^{t-1,s}\|^2}{n}\bigg] + \frac{6\a^2}{1-\lambda^2}\E\Big[\b\|\ol{\v}^{t-1,s}\b\|^2\Big].
\end{align*}

(iii) If~$0<\a\leq\frac{1-\lambda^2}{4\sqrt{6}L}$, the following inequality holds:~$\forall s\geq2$,
\begin{align*}
\mathbb{E}\bigg[\frac{\|\mb{y}^{1,s}-\mb{J}\mb{y}^{1,s}\|^2}{nL^2}\bigg] \leq&~\frac{3+\lambda^2}{4}\E\bigg[\frac{\|\mb{y}^{q+1,s-1}-\mb{J}\mb{y}^{q+1,s-1}\|^2}{nL^2}\bigg]   \nonumber\\
&+\frac{18}{1-\lambda^2}\E\bigg[\frac{\|\x^{q,s-1} - \mb{J}\x^{q,s-1}\|^2}{n}\bigg] + \frac{12\a^2}{1-\lambda^2}\sum_{t=0}^{q}\E\Big[\b\|\ol{\v}^{t,s-1}\b\|^2\Big] \nonumber\\
&+\frac{42}{1-\lambda^2}\sum_{t=0}^{q}\E\bigg[\frac{\|\x^{t,s-1} - \mb{J}\x^{t,s-1}\|^2}{n}\bigg]. 
\end{align*}
\end{lemma}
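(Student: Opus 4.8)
The plan is to unroll the gradient tracking recursion~\eqref{gtsarah1} at the very first step. Since the algorithm initializes $\mb{y}^{0,1}=\mb{0}_{np}$ and $\mb{v}^{-1,1}=\mb{0}_{np}$ and sets $\mb{v}^{0,1}=\nabla\mb{f}(\x^{0,1})$, one obtains $\mb{y}^{1,1}=\W\mb{y}^{0,1}+\mb{v}^{0,1}-\mb{v}^{-1,1}=\nabla\mb{f}(\x^{0,1})$, so that $\|\mb{y}^{1,1}-\J\mb{y}^{1,1}\|^2=\|(\I-\J)\nabla\mb{f}(\x^{0,1})\|^2\le\|\nabla\mb{f}(\x^{0,1})\|^2$, the last step because $\I-\J$ is an orthogonal projection.

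\textbf{Part (ii).} The plan is to derive a one-step contraction for the tracking error. Using~\eqref{gtsarah1} and $\J\W=\J$, I would write $\mb{y}^{t+1,s}-\J\mb{y}^{t+1,s}=(\W\mb{y}^{t,s}-\J\mb{y}^{t,s})+(\I-\J)(\mb{v}^{t,s}-\mb{v}^{t-1,s})$, then apply Young's inequality with parameter $\tfrac{1-\lambda^2}{2\lambda^2}$ together with \cref{W} and the non-expansiveness of $\I-\J$ to get $\|\mb{y}^{t+1,s}-\J\mb{y}^{t+1,s}\|^2\le\tfrac{1+\lambda^2}{2}\|\mb{y}^{t,s}-\J\mb{y}^{t,s}\|^2+\tfrac{2}{1-\lambda^2}\|\mb{v}^{t,s}-\mb{v}^{t-1,s}\|^2$. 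Next I would control the increment: $\mb{v}_i^{t,s}-\mb{v}_i^{t-1,s}$ is an average of the stochastic differences $\nabla f_{i,\tau}(\x_i^{t,s})-\nabla f_{i,\tau}(\x_i^{t-1,s})$, so by Jensen's inequality and \cref{tr1}, $\E[\|\mb{v}^{t,s}-\mb{v}^{t-1,s}\|^2\mid\F]\le L^2\|\x^{t,s}-\x^{t-1,s}\|^2$; then I would combine~\eqref{xdiff} with~\eqref{consensus2} to trade $\|\x^{t,s}-\J\x^{t,s}\|^2$ for $\|\x^{t-1,s}-\J\x^{t-1,s}\|^2$ and $\|\mb{y}^{t,s}-\J\mb{y}^{t,s}\|^2$, yielding $\|\x^{t,s}-\x^{t-1,s}\|^2\le 9\|\x^{t-1,s}-\J\x^{t-1,s}\|^2+6\a^2\|\mb{y}^{t,s}-\J\mb{y}^{t,s}\|^2+3n\a^2\|\ol{\v}^{t-1,s}\|^2$. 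Substituting back, taking total expectation, and dividing through by $nL^2$, the coefficient multiplying $\E[\|\mb{y}^{t,s}-\J\mb{y}^{t,s}\|^2/(nL^2)]$ becomes $\tfrac{1+\lambda^2}{2}+\tfrac{12\a^2L^2}{1-\lambda^2}$; the step-size restriction $\a\le\tfrac{1-\lambda^2}{4\sqrt{3}\,L}$ is exactly what forces it to be at most $\tfrac{3+\lambda^2}{4}$, while the two remaining coefficients come out as $\tfrac{18}{1-\lambda^2}$ and $\tfrac{6\a^2}{1-\lambda^2}$, matching the claim.

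\textbf{Part (iii).} The template is the same, except that at the outer-loop boundary the input to the tracking recursion is $\mb{v}^{0,s}-\mb{v}^{-1,s}=\nabla\mb{f}(\x^{q+1,s-1})-\mb{v}^{q,s-1}$ rather than a \texttt{SARAH} increment, using $\x^{0,s}=\x^{q+1,s-1}$, $\mb{y}^{0,s}=\mb{y}^{q+1,s-1}$, $\mb{v}^{-1,s}=\mb{v}^{q,s-1}$ and $\mb{v}^{0,s}=\nabla\mb{f}(\x^{0,s})$ from \cref{GT-SARAH}. I would split this difference as $\big(\nabla\mb{f}(\x^{q+1,s-1})-\nabla\mb{f}(\x^{q,s-1})\big)+\big(\nabla\mb{f}(\x^{q,s-1})-\mb{v}^{q,s-1}\big)$: the first piece is handled exactly as in Part (ii) via $L$-smoothness, \eqref{xdiff} at $t=q+1$, and \eqref{consensus2} at $t=q$; the second piece is the end-of-inner-loop estimation error, bounded by \cref{vrs} at $t=q$ (with $B\ge1$ to discard the $1/B$ factors). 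Feeding both into the Young-plus-\cref{W} step and taking expectations, the self-feedback coefficient is now $\tfrac{1+\lambda^2}{2}+\tfrac{24\a^2L^2}{1-\lambda^2}$, which the restriction $\a\le\tfrac{1-\lambda^2}{4\sqrt{6}\,L}$ brings down to $\tfrac{3+\lambda^2}{4}$; a final cosmetic step moving half of the isolated $\|\x^{q,s-1}-\J\x^{q,s-1}\|^2$ coefficient into the summation $\sum_{t=0}^{q}\|\x^{t,s-1}-\J\x^{t,s-1}\|^2$ produces the stated constants $\tfrac{18}{1-\lambda^2}$, $\tfrac{12\a^2}{1-\lambda^2}$, $\tfrac{42}{1-\lambda^2}$.

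\textbf{Main obstacle.} The one point needing care is that any sharp bound on $\|\x^{t,s}-\x^{t-1,s}\|^2$ (and, in Part (iii), on the gradient drift across a loop cycle) inevitably re-injects the \emph{current} tracking error $\|\mb{y}^{t,s}-\J\mb{y}^{t,s}\|^2$ into the right-hand side with an $\mathcal{O}(\a^2)$ coefficient; to preserve a genuine contraction this feedback must fit inside the slack $\tfrac{3+\lambda^2}{4}-\tfrac{1+\lambda^2}{2}=\tfrac{1-\lambda^2}{4}$, and arranging the constants so that precisely the thresholds $\tfrac{1-\lambda^2}{4\sqrt{3}\,L}$ and $\tfrac{1-\lambda^2}{4\sqrt{6}\,L}$ emerge (and not something larger) is the crux; everything else is routine estimation.
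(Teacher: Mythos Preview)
Your proposal is correct and follows essentially the same route as the paper's proof: the same Young-with-parameter-$\tfrac{1-\lambda^2}{2\lambda^2}$ plus \cref{W} step for the tracking error, the same use of \cref{tr1}, \eqref{xdiff}, and \eqref{consensus2} to control $\|\mb{v}^{t,s}-\mb{v}^{t-1,s}\|^2$, the same invocation of \cref{vrs} at the outer-loop boundary, and the same absorption of the $\mathcal{O}(\alpha^2L^2)$ feedback into the slack $\tfrac{1-\lambda^2}{4}$, yielding exactly the thresholds $\tfrac{1-\lambda^2}{4\sqrt{3}L}$ and $\tfrac{1-\lambda^2}{4\sqrt{6}L}$. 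Your final ``cosmetic'' redistribution in part~(iii)---splitting the $\tfrac{36}{1-\lambda^2}$ coefficient on the isolated $\|\x^{q,s-1}-\J\x^{q,s-1}\|^2$ term as $18+18$ and absorbing the second half, together with $B\geq1$, into the summed consensus term to get $\tfrac{42}{1-\lambda^2}$---is precisely what the paper leaves implicit when it writes ``the third statement in the lemma follows.''
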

\begin{proof}
\emph{\textbf{(i)}} Recall that~$\v^{-1,1} = \mb{0}_{np}$,~$\y^{0,1} = \mb{0}_{np}$ and~$\v^{0,1} = \nabla\mb{f}\big(\x^{0,1}\big)$. Using the gradient tracking update at iteration~$(1,1)$ and~$\|\I_{np}-\mb{J}\| = 1$, we have:
\begin{align*}
\big\|\mb{y}^{1,1}-\mb{J}\mb{y}^{1,1}\big\|^2
=&~\big\|(\I_{np} -\mb{J})\left(\mb{W}\mb{y}^{0,1} + \mb{v}^{0,1} - \mb{v}^{-1,1}\right)\big\|^2 
\leq \big\|\nabla\mb{f}\big(\x^{0,1}\big)\big\|^2,
\end{align*}
which proves the first statement in the lemma. In the following, we prove the second and the third statements. 
We have:~$\forall s\geq1$ and~$\forall t\in[0,q]$,
\begin{align}\label{y0}
\big\|\mb{y}^{t+1,s}-\mb{J}\mb{y}^{t+1,s}\big\|^2
=&~\big\|\mb{W}\mb{y}^{t,s} + \mb{v}^{t,s} - \mb{v}^{t-1,s} -\mb{J}\left(\mb{W}\mb{y}^{t,s} + \mb{v}^{t,s} - \mb{v}^{t-1,s}\right)\big\|^2 \nonumber\\
=&~\big\|\mb{W}\mb{y}^{t,s}-\mb{J}\mb{y}^{t,s}+\left(\mb{I}_{np}-\mb{J}\right)\left(\mb{v}^{t,s} - \mb{v}^{t-1,s}\right)\big\|^2.
\end{align}
We apply the inequality that~${\|\mb{a}+\mb{b}\|^2\leq(1+\eta)\|\mb{a}\|^2+(1+\frac{1}{\eta})\|\mb{b}\|^2}$, ${\forall\mb{a},\mb{b}\in\mathbb{R}^{np}}$, with $\eta=\frac{1-\lambda^2}{2\lambda^2}$ and that~$\|\mb{I}_{np}-\mb{J}\| = 1$ to~\cref{y0} to obtain:~$\forall s\geq1$ and~$\forall t\in[0,q]$,
\begin{align}\label{yt0}
\big\|\mb{y}^{t+1,s}-\mb{J}\mb{y}^{t+1,s}\big\|^2 
\leq&~\tfrac{1+\lambda^2}{2\lambda^2}\big\|\mb{W}\mb{y}^{t,s}-\mb{J}\mb{y}^{t,s}\big\|^2 
+ \tfrac{1+\lambda^2}{1-\lambda^2}\big\|\mb{v}^{t,s} - \mb{v}^{t-1,s}\big\|^2\nonumber\\
\leq&~\tfrac{1+\lambda^2}{2}\big\|\mb{y}^{t,s}-\mb{J}\mb{y}^{t,s}\big\|^2 + \tfrac{2}{1-\lambda^2}\big\|\mb{v}^{t,s} - \mb{v}^{t-1,s}\big\|^2,
\end{align}
where the last line is due to Lemma~\ref{W}.
Next, we derive upper bounds for the last term in~\eqref{yt0} under different ranges of~$t$ and~$s$.

\textbf{\emph{(ii)}~$\forall t\in[1,q]$ and~$\forall s\geq1$.} By the update of each local~$\v_i^{t,s}$, we have that
\begin{align}\label{v_diff0}
\mathbb{E}\Big[\b\|\mb{v}^{t,s}-\mb{v}^{t-1,s}\b\|^2|\mc{F}^{t,s}\Big] 
=&\sum_{i=1}^n\mathbb{E}\Bigg[\bigg\|\frac{1}{B}\sum_{l=1}^B\Big(\gf_{i,\tau_{i,l}^{t,s}}\big(\x_{i}^{t,s}\big) - \gf_{i,\tau_{i,l}^{t,s}}\big(\x_{i}^{t-1,s}\big)\Big)\bigg\|^2\Big|\mc{F}^{t,s}\Bigg] \nonumber\\
\leq&~\frac{1}{B}\sum_{i=1}^n\sum_{l=1}^B\mathbb{E}\bigg[\Big\|\gf_{i,\tau_{i,l}^{t,s}}\big(\x_{i}^{t,s}\big) - \gf_{i,\tau_{i,l}^{t,s}}\big(\x_{i}^{t-1,s}\big)\Big\|^2\Big|\mc{F}^{t,s}\bigg] \nonumber\\
\leq&~L^2\left\|\x^{t,s} - \x^{t-1,s}\right\|^2.
\end{align}
where the last line is due to Lemma~\ref{tr1}. To proceed, we further use~\eqref{xdiff} and~\eqref{consensus2} to refine~\eqref{v_diff0} as follows:~$\forall s\geq1$ and~$\forall t\in[1,q]$,
\begin{align}\label{v_diff}
&\mathbb{E}\Big[\big\|\mb{v}^{t,s}-\mb{v}^{t-1,s}\big\|^2|\mc{F}^{t,s}\Big]\nonumber\\
\leq&~3L^2\left\|\x^{t,s} - \mb{J}\x^{t,s}\right\|^2 + 3n\a^2L^2\left\|\ol{\v}^{t-1,s}\right\|^2 + 3L^2\left\|\x^{t-1,s}-\mb{J}\x^{t-1,s}\right\|^2 \nonumber\\
\leq&~3n\a^2L^2\left\|\ol{\v}^{t-1,s}\right\|^2 + 9L^2\left\|\x^{t-1,s}-\mb{J}\x^{t-1,s}\right\|^2
+ 6\a^2L^2\left\|\y^{t,s}-\mb{J}\y^{t,s}\right\|^2.
\end{align}
We take the expectation of~\eqref{v_diff} and use it in~\eqref{yt0} to obtain:~$\forall s\geq1$ and~$\forall t\in[1,q]$,
\begin{align*}
\mathbb{E}\Big[\big\|\mb{y}^{t+1,s}-\mb{J}\mb{y}^{t+1,s}\big\|^2\Big] 
\leq&~\left(\tfrac{1+\lambda^2}{2}+\tfrac{12\a^2L^2}{1-\lambda^2}\right)\E\Big[\big\|\mb{y}^{t,s}-\mb{J}\mb{y}^{t,s}\big\|^2\Big] \nonumber\\
&+\tfrac{18L^2}{1-\lambda^2}\E\Big[\big\|\x^{t-1,s}-\mb{J}\x^{t-1,s}\big\|^2\Big]
+\tfrac{6n\a^2L^2}{1-\lambda^2}\E\Big[\big\|\ol{\v}^{t-1,s}\big\|^2\Big].
\end{align*}
The second statement in the lemma follows by the fact that~$\frac{1+\lambda^2}{2}+\frac{12\alpha^2L^2}{1-\lambda^2}\leq\frac{3+\lambda^2}{4}$ if~$0<\a\leq\frac{1-\lambda^2}{4\sqrt{3}L}$.

\textbf{\emph{(iii)}~$t = 0$ and~$\forall s\geq2$.} By the update of~\GS, we observe that:~$\forall s\geq2$,
\begin{align}\label{yt_final00}
\left\|\mb{v}^{0,s}-\mb{v}^{-1,s}\right\|^2          
=&~\left\|\nabla\mb{f}(\x^{q+1,s-1}) - \mb{v}^{q,s-1}\right\|^2
\n\\   
=&~\left\|\nabla\mb{f}(\x^{q+1,s-1}) - \nabla\mb{f}(\x^{q,s-1}) + \nabla\mb{f}(\x^{q,s-1}) - \mb{v}^{q,s-1}\right\|^2 \nonumber\\
\leq&~2L^2\left\|\x^{q+1,s-1} - \x^{q,s-1}\right\|^2 + 2\left\|\nabla\mb{f}\big(\x^{q,s-1}\big) - \v^{q,s-1} \right\|^2,
\n\\
\leq&~6L^2\left\|\x^{q+1,s-1} - \J\x^{q+1,s-1}\right\|^2
+ 6n\a^2L^2\left\|\ol{\v}^{q,s-1}\right\|^2
\n\\
&+ 6L^2\left\|\x^{q,s-1} - \J\x^{q,s-1}\right\|^2 
+ 2\left\|\nabla\mb{f}\big(\x^{q,s-1}\big) - \v^{q,s-1} \right\|^2           \n\\
\leq&~18L^2\left\|\x^{q,s-1} - \J\x^{q,s-1}\right\|^2
+ 6n\a^2L^2\left\|\ol{\v}^{q,s-1}\right\|^2
\n\\
&+ 12\a^2L^2\left\|\y^{q+1,s-1} - \J\y^{q+1,s-1}\right\|^2 
+ 2\left\|\nabla\mb{f}\big(\x^{q,s-1}\big) - \v^{q,s-1} \right\|^2,
\end{align}
where the first inequality uses the~$L$-smoothness of each~$f_i$, the second inequality uses~\eqref{xdiff}, and the last inequality uses~\eqref{consensus2}. Taking the expectation of~\eqref{yt_final00} and then using Lemma~\ref{vrs} gives:~$\forall s\geq2$,
\begin{align}\label{yt_final0}
\mathbb{E}\Big[\b\|\mb{v}^{0,s}-\mb{v}^{-1,s}\b\|^2\Big]
\leq&~18L^2\E\Big[\b\|\x^{q,s-1}-\mb{J}\x^{q,s-1}\b\|^2\Big] +6n\a^2L^2\tsum_{t=0}^{q}\E\Big[\b\|\ol{\v}^{t,s-1}\b\|^2\Big]
\nonumber\\
&+ 12\a^2L^2\E\Big[\b\|\y^{q+1,s-1}-\mb{J}\y^{q+1,s-1}\b\|^2\Big] \nonumber\\
&+ \tfrac{12L^2}{B}\tsum_{t=0}^{q}\E\Big[\b\|\x^{t,s-1} - \mb{J}\x^{t,s-1}\b\|^2\Big]. 
\end{align}
We recall from~\cref{yt0} that~$\forall s\geq2$,
\begin{align}\label{yt_final}
\left\|\mb{y}^{1,s}-\mb{J}\mb{y}^{1,s}\right\|^2 
\leq\tfrac{1+\lambda^2}{2}\left\|\mb{y}^{q+1,s-1}-\mb{J}\mb{y}^{q+1,s-1}\right\|^2 + \tfrac{2}{1-\lambda^2}\left\|\mb{v}^{0,s} - \mb{v}^{-1,s}\right\|^2.
\end{align}
We finally apply~\cref{yt_final0} to~\cref{yt_final} to obtain:~$\forall s\geq2$,
\begin{align*}
\mathbb{E}\Big[\big\|\mb{y}^{1,s}-\mb{J}\mb{y}^{1,s}\b\|^2\Big] 
\leq&~\left(\tfrac{1+\lambda^2}{2}+\tfrac{24\a^2L^2}{1-\lambda^2}\right)\E\Big[\b\|\mb{y}^{q+1,s-1}-\mb{J}\mb{y}^{q+1,s-1}\b\|^2\Big]
\nonumber\\
&+\tfrac{36L^2}{1-\lambda^2}\E\Big[\b\|\x^{q,s-1}-\mb{J}\x^{q,s-1}\b\|^2\Big]
+ \tfrac{12n\a^2L^2}{1-\lambda^2}\tsum_{t=0}^{q}\E\Big[\b\|\ol{\v}^{t,s-1}\b\|^2\Big] \nonumber\\
&+ \tfrac{24L^2}{B(1-\lambda^2)}\tsum_{t=0}^{q}\E\Big[\b\|\x^{t,s-1} - \mb{J}\x^{t,s-1}\b\|^2\Big]. 
\end{align*}
We note that~$\frac{1+\lambda^2}{2}+\frac{24\a^2L^2}{1-\lambda^2}\leq\frac{3+\lambda^2}{4}$ if~$0<\a\leq\frac{1-\lambda^2}{4\sqrt{6}L}$ and then the third statement in the lemma follows.
\end{proof}

\subsection{\GS~as a linear time-invariant (LTI) system}
With the help of \cref{consensus1} and~\cref{gt_contrc}, we now abstract~\GS~with an LTI system to quantify jointly the state consensus and the gradient tracking error. 
\begin{lemma}\label{GS_LTI}
If the step-size~$\a$ follows that~$0<\alpha\leq\frac{1-\lambda^2}{4\sqrt{6}L}$, then we have
\begin{align}
&\mb{u}^{t,s} \leq \G\mb{u}^{t-1,s} + \mb{b}^{t-1,s}, \qquad\qquad\qquad\qquad\qquad\qquad\forall s\in[1,S]~\text{and}~t\in[1,q], \label{D1}\\
&\mb{u}^{0,s} \leq \G\mb{u}^{q,s-1} + \mb{b}^{q,s-1} + \sum_{t=0}^{q}\Big(\mb{b}^{t,s-1}
+ \mb{H}\mb{u}^{t,s-1}\Big), \qquad\qquad~\forall s\in[2,S], \label{D2}
\end{align}
where,~$\forall s\geq1$ and~$\forall t\in[0,q]$,
\begin{align*}
\mb{u}^{t,s}&:=
\begin{bmatrix}
\dfrac{1}{n}\mbb{E}\Big[\big\|\mb{x}^{t,s}-\mb{J}\mb{x}^{t,s}\big\|^2\Big]\\[1.5ex]
\dfrac{1}{nL^2}\mbb{E}\Big[\big\|\mb{y}^{t+1,s}-\mb{J}\mb{y}^{t+1,s}\big\|^2\Big]
\end{bmatrix},
&\mb{b}&:=
\begin{bmatrix}
0\\[1.5ex]
\dfrac{12\alpha^2}{1-\lambda^2}
\end{bmatrix},\quad\mb{b}^{t,s}:=
\mb b \mathbb{E}\Big[\b\|\ol{\v}^{t,s}\b\|^2\Big],\\
\mb{G}&:=
\begin{bmatrix}
\dfrac{1+\lambda^2}{2}  & \dfrac{2\alpha^2L^2}{1-\lambda^2} \\[2ex]
\dfrac{18}{1-\lambda^2}  & \dfrac{3+\lambda^2}{4}
\end{bmatrix},
&\mb{H}&:= 
\begin{bmatrix}
0 & 0\\[1ex]
\dfrac{42}{1-\lambda^2} & 0
\end{bmatrix}.
\end{align*}
\end{lemma}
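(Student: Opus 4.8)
The plan is to obtain~\eqref{D1} and~\eqref{D2} by verifying their two scalar rows separately, assembling for each row an inequality that is already available from the preceding lemmas. The top row of $\mb{u}^{t,s}$ is the normalized state consensus error $\tfrac1n\E[\|\mb{x}^{t,s}-\mb{J}\mb{x}^{t,s}\|^2]$, which is governed by the consensus contraction~\eqref{consensus1}, while the bottom row is the normalized gradient tracking error $\tfrac1{nL^2}\E[\|\mb{y}^{t+1,s}-\mb{J}\mb{y}^{t+1,s}\|^2]$, governed by~\cref{gt_contrc}. The structural point I would state up front is the index convention: $\mb{u}^{t,s}$ pairs the \emph{time-$t$} state error with the \emph{time-$(t+1)$} tracking error. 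This is exactly what makes the recursion close, since in~\eqref{consensus1} the time-$t$ state error is driven by the time-$(t-1)$ state error and the time-$t$ tracking error, whereas in statement~(ii) of~\cref{gt_contrc} the time-$(t+1)$ tracking error is driven by the time-$t$ tracking error, the time-$(t-1)$ state error, and $\E[\|\ol{\v}^{t-1,s}\|^2]$.

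For the top row, for $t\in[1,q]$ I would instantiate~\eqref{consensus1} at time $t-1$, divide by $n$, take expectations, and identify $\tfrac1n\E[\|\mb{y}^{t,s}-\mb{J}\mb{y}^{t,s}\|^2]=L^2\,[\mb{u}^{t-1,s}]_2$; this reproduces the first row of $\G\mb{u}^{t-1,s}$, and since the first entry of $\mb{b}$ vanishes this is the top row of~\eqref{D1}. For the loop boundary I would use $\mb{x}^{0,s}=\mb{x}^{q+1,s-1}$ and run the same computation with~\eqref{consensus1} applied at time $q$ of outer loop $s-1$, obtaining $[\mb{u}^{0,s}]_1\le[\G\mb{u}^{q,s-1}]_1$; the top row of~\eqref{D2} then follows because $\mb{b}$ and the first row of $\mb{H}$ both have vanishing first entry.

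For the bottom row, I would divide statement~(ii) of~\cref{gt_contrc} by $nL^2$ to get $[\mb{u}^{t,s}]_2\le\tfrac{18}{1-\lambda^2}[\mb{u}^{t-1,s}]_1+\tfrac{3+\lambda^2}{4}[\mb{u}^{t-1,s}]_2+\tfrac{6\a^2}{1-\lambda^2}\E[\|\ol{\v}^{t-1,s}\|^2]$, that is, the second row of $\G\mb{u}^{t-1,s}$ plus a $\ol{\v}$-term, and then relax $\tfrac{6\a^2}{1-\lambda^2}\le\tfrac{12\a^2}{1-\lambda^2}$ so that a single constant vector $\mb{b}$ serves both~\eqref{D1} and~\eqref{D2}; this gives the bottom row of~\eqref{D1}. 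For~\eqref{D2} I would divide statement~(iii) of~\cref{gt_contrc} by $nL^2$, use $\mb{y}^{0,s}=\mb{y}^{q+1,s-1}$, and match term by term: the first two terms give the second row of $\G\mb{u}^{q,s-1}$, the $\tfrac{42}{1-\lambda^2}\sum_{t=0}^q(\cdot)$ term gives $[\sum_{t=0}^q\mb{H}\mb{u}^{t,s-1}]_2$, and the $\tfrac{12\a^2}{1-\lambda^2}\sum_{t=0}^q\E[\|\ol{\v}^{t,s-1}\|^2]$ term gives $[\sum_{t=0}^q\mb{b}^{t,s-1}]_2$. The right-hand side of~\eqref{D2} additionally carries an explicit $\mb{b}^{q,s-1}$, whose bottom entry $\tfrac{12\a^2}{1-\lambda^2}\E[\|\ol{\v}^{q,s-1}\|^2]$ is nonnegative, so including it only loosens the bound. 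Throughout, the assumed step-size bound $0<\a\le\tfrac{1-\lambda^2}{4\sqrt6 L}$ subsumes the requirements $\a\le\tfrac{1-\lambda^2}{4\sqrt3 L}$ and $\a\le\tfrac{1-\lambda^2}{4\sqrt6 L}$ of statements~(ii) and~(iii) of~\cref{gt_contrc}; statement~(i) is not used here, being reserved for the base case used later when the LTI is solved.

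Since the heavy lifting already lives in~\cref{contraction_consensus} and~\cref{gt_contrc}, I do not expect a deep obstacle; the only point needing genuine care is the index bookkeeping — the one-step shift in the $\mb{y}$-slot of $\mb{u}^{t,s}$ and the loop-boundary identifications $\mb{x}^{0,s}=\mb{x}^{q+1,s-1}$, $\mb{v}^{-1,s}=\mb{v}^{q,s-1}$ — together with checking that the two relaxations (the $6\to12$ enlargement of the $\ol{\v}$-coefficient and the double-counted $t=q$ term in~\eqref{D2}) only weaken the inequalities rather than affecting their validity.
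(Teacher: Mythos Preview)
Your proposal is correct and follows exactly the approach the paper takes: the paper's proof is the single sentence ``Write the inequalities in~\eqref{consensus1} and~\cref{gt_contrc} jointly in a matrix form,'' and your write-up is simply a careful execution of that instruction, including the index shift in the $\mb{y}$-slot, the loop-boundary identifications, and the harmless relaxations ($6\to 12$ and the extra $\mb{b}^{q,s-1}$). There is nothing to add or correct.
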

\begin{proof}
Write the inequalities in~\cref{consensus1} and~\cref{gt_contrc}
jointly in a matrix form.
\end{proof}

We next derive the range of the step-size~$\alpha$ such that~$\rho(\mb G)<1$, i.e. the LTI system does not diverge, with the help of the following lemma. 
\begin{lemma}[\!\!\cite{matrix_analysis}]\label{rho_bound}Let~$\mb{X}\in\mathbb{R}^{d\times d}$ be (entry-wise) non-negative and~$\mb{x}\in\mathbb{R}^d$ be (entry-wise) positive. If~$\mb{X}\mb{x}<\mb{x}$ (entry-wise), then~$\rho(\mb{X})<1$.
\end{lemma}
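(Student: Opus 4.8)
The plan is to prove $\rho(\mathbf{X})<1$ by an elementary monotonicity argument that sidesteps the full Perron--Frobenius theorem: I will show that $\mathbf{X}^k\to\mathbf{0}$ entry-wise as $k\to\infty$, which is equivalent to $\rho(\mathbf{X})<1$. First, since $\mathbf{X}$ is entry-wise non-negative and $\mathbf{x}$ is entry-wise positive, each entry of $\mathbf{X}\mathbf{x}$ is non-negative; together with the hypothesis $\mathbf{X}\mathbf{x}<\mathbf{x}$ this shows that $\theta:=\max_{1\le i\le d}(\mathbf{X}\mathbf{x})_i/x_i$ is well-defined with $\theta\in[0,1)$, so that $\mathbf{X}\mathbf{x}\le\theta\mathbf{x}$ entry-wise. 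Since $\mathbf{X}$, and hence every power $\mathbf{X}^k$, is entry-wise non-negative, left-multiplication by $\mathbf{X}$ preserves entry-wise inequalities between non-negative vectors, and a one-line induction gives $\mathbf{X}^k\mathbf{x}\le\theta^k\mathbf{x}$ for all $k\ge0$ (assuming $\mathbf{X}^k\mathbf{x}\le\theta^k\mathbf{x}$, one has $\mathbf{X}^{k+1}\mathbf{x}=\mathbf{X}(\mathbf{X}^k\mathbf{x})\le\theta^k\mathbf{X}\mathbf{x}\le\theta^{k+1}\mathbf{x}$); in particular $\mathbf{X}^k\mathbf{x}\to\mathbf{0}$.

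Next I would pass from the single vector $\mathbf{x}$ to the whole matrix. Given an arbitrary $\mathbf{z}\in\mathbb{R}^d$, positivity of $\mathbf{x}$ provides a constant $c>0$ with $-c\,\mathbf{x}\le\mathbf{z}\le c\,\mathbf{x}$ entry-wise (e.g.\ $c=\max_i|z_i|/\min_i x_i$); applying the non-negative matrix $\mathbf{X}^k$ to this double inequality yields $-c\,\mathbf{X}^k\mathbf{x}\le\mathbf{X}^k\mathbf{z}\le c\,\mathbf{X}^k\mathbf{x}$, whence $\mathbf{X}^k\mathbf{z}\to\mathbf{0}$. Letting $\mathbf{z}$ range over the standard basis vectors shows that every column of $\mathbf{X}^k$ converges to $\mathbf{0}$, i.e.\ $\mathbf{X}^k\to\mathbf{0}$. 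This forces $\rho(\mathbf{X})<1$: by Gelfand's formula $\rho(\mathbf{X})=\lim_{k\to\infty}\|\mathbf{X}^k\|^{1/k}$, or more concretely, for any eigenpair $(\lambda,\mathbf{v})$ one has $\lambda^k\mathbf{v}=\mathbf{X}^k\mathbf{v}\to\mathbf{0}$ with $\mathbf{v}\neq\mathbf{0}$, forcing $|\lambda|<1$.

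I do not anticipate a genuine obstacle here; the only delicate point is the induction step, where the non-negativity of $\mathbf{X}$ (hence of each $\mathbf{X}^k$) is precisely what allows the entry-wise bound to be propagated under left-multiplication. If a shorter route is preferred, one can instead quote the Collatz--Wielandt inequality $\rho(\mathbf{X})\le\max_i(\mathbf{X}\mathbf{z})_i/z_i$, valid for non-negative $\mathbf{X}$ and positive $\mathbf{z}$, and apply it with $\mathbf{z}=\mathbf{x}$; or, dually, left-multiply $\mathbf{X}\mathbf{x}<\mathbf{x}$ by a non-negative left Perron eigenvector $\mathbf{w}^\top$ of $\mathbf{X}$ (so that $\mathbf{w}^\top\mathbf{X}=\rho(\mathbf{X})\,\mathbf{w}^\top$) and use $\mathbf{w}^\top\mathbf{x}>0$ to conclude $\rho(\mathbf{X})\,\mathbf{w}^\top\mathbf{x}=\mathbf{w}^\top\mathbf{X}\mathbf{x}<\mathbf{w}^\top\mathbf{x}$.
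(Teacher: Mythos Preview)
Your proof is correct and complete. The paper does not supply its own proof of this lemma; it is stated as a cited result from Horn and Johnson's \emph{Matrix Analysis} and used as a black box in the subsequent Lemma~\ref{st}. Consequently there is nothing in the paper to compare your argument against, but your elementary approach---extracting a contraction factor $\theta\in[0,1)$ from the strict inequality, propagating $\mathbf{X}^k\mathbf{x}\le\theta^k\mathbf{x}$ by induction via the order-preserving property of nonnegative matrices, and then sandwiching an arbitrary vector between $\pm c\,\mathbf{x}$ to conclude $\mathbf{X}^k\to\mathbf{0}$---is a clean, self-contained route that avoids invoking the full Perron--Frobenius machinery. The alternative shortcuts you mention (Collatz--Wielandt, or pairing with a nonnegative left Perron eigenvector) are the standard textbook proofs and are equally valid.
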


\begin{lemma}\label{st}
If the step-size~$\alpha$ follows that~$0<\a<\frac{(1-\lambda^2)^2}{8\sqrt{5}L}$, then~$\rho(\mb G)<1$ and therefore~$\sum_{k=0}^{\infty}\mb{G}^k$ is convergent such that $\sum_{k=0}^{\infty}\mb{G}^k = (\mb{I}_2-\mb{G})^{-1}$.
\end{lemma}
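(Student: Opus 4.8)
The plan is to establish $\rho(\mb{G})<1$ by exhibiting an entrywise-positive vector $\mb{x}\in\R^2$ with $\mb{G}\mb{x}<\mb{x}$ (entrywise) and then invoking Lemma~\ref{rho_bound}. A convenient choice is $\mb{x}:=\big((1-\lambda^2)^2,\,80\big)^\top$, which is positive since $\lambda\in[0,1)$ forces $1-\lambda^2\in(0,1]$. It then remains to verify the two scalar inequalities defining $\mb{G}\mb{x}<\mb{x}$ under the step-size bound in the statement.

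Unpacking the second row: $\frac{18}{1-\lambda^2}(1-\lambda^2)^2+\frac{3+\lambda^2}{4}\cdot 80<80$ is equivalent to $18(1-\lambda^2)<20(1-\lambda^2)$, i.e., to $\lambda^2<1$, which holds for every admissible $\lambda$ irrespective of $\alpha$. Unpacking the first row: $\frac{1+\lambda^2}{2}(1-\lambda^2)^2+\frac{2\alpha^2L^2}{1-\lambda^2}\cdot 80<(1-\lambda^2)^2$ rearranges to $\frac{160\alpha^2L^2}{1-\lambda^2}<\frac{(1-\lambda^2)^3}{2}$, hence to $\alpha^2<\frac{(1-\lambda^2)^4}{320L^2}$, i.e., $\alpha<\frac{(1-\lambda^2)^2}{8\sqrt{5}L}$ — which is exactly the hypothesis, using $\sqrt{320}=8\sqrt5$. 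Both inequalities being strict, Lemma~\ref{rho_bound} yields $\rho(\mb{G})<1$.

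For the convergence of the series, I would argue in the standard way: $\rho(\mb{G})<1$ implies $\mb{G}^k\to\mb{0}$, so the partial sums $\sum_{k=0}^{K}\mb{G}^k$ form a Cauchy sequence and converge; moreover $1$ is not an eigenvalue of $\mb{G}$, so $\I_2-\mb{G}$ is invertible, and passing to the limit $K\to\infty$ in the telescoping identity $(\I_2-\mb{G})\sum_{k=0}^{K}\mb{G}^k=\I_2-\mb{G}^{K+1}$ gives $\sum_{k=0}^{\infty}\mb{G}^k=(\I_2-\mb{G})^{-1}$.

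The only step requiring any insight is the choice of the test vector $\mb{x}$; everything else is a short algebraic check. The constants $(1-\lambda^2)^2$ and $80$ are calibrated precisely so that the resulting threshold on $\alpha$ collapses to the clean bound $\frac{(1-\lambda^2)^2}{8\sqrt5 L}$ (the value $80$ dominates the $\frac{18}{1-\lambda^2}$ coupling with a little room, and then $\sqrt{320}=8\sqrt5$ accounts for the remaining factor), so I do not anticipate a genuine obstacle once that ansatz is fixed.
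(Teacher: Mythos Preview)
Your proof is correct and follows essentially the same approach as the paper: both apply Lemma~\ref{rho_bound} with the test vector having ratio $\ve_1/\ve_2=(1-\lambda^2)^2/80$, which is precisely your choice $\mb{x}=\big((1-\lambda^2)^2,\,80\big)^\top$, and the resulting scalar inequalities reduce identically to $\lambda^2<1$ and $\alpha<\frac{(1-\lambda^2)^2}{8\sqrt{5}L}$. Your explicit justification of the Neumann series identity is a welcome addition that the paper leaves implicit.
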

\begin{proof}
In the light of~Lemma~\ref{rho_bound}, we solve the range of~$\alpha$ and a positive vector~$\bds\ve = [\ve_1,\ve_2]^\top$ such that~$\mb{G}\bds\ve < \bds\ve$, which is equivalent to the following two inequalities.
\begin{align}\label{e}
\left\{
\begin{array}{ll}
\tfrac{1+\lambda^2}{2}\ve_1 + \tfrac{2\alpha^2L^2}{1-\lambda^2}\ve_2<\ve_1\\[1.8ex]
\tfrac{18}{1-\lambda^2}\ve_1 + \tfrac{3+\lambda^2}{4}\ve_2 < \ve_2
\end{array}
\right.
\iff
\left\{
\begin{array}{ll}
\alpha^2<\tfrac{(1-\lambda^2)^2}{4L^2}\tfrac{\ve_1}{\ve_2}\\[1.8ex]
\tfrac{\ve_1}{\ve_2}  < \tfrac{(1-\lambda^2)^2}{72}
\end{array}
\right.
\end{align}
According to the second inequality of~\eqref{e}, we set~$\ve_1/\ve_2 = (1-\lambda^2)^2/80$ and the proof follows by using it in the first inequality of~\eqref{e} to solve for the range of~$\a$.
\end{proof}

Based on Lemma~\ref{st}, the LTI system is stable under an appropriate step-size~$\alpha$ and therefore we can solve the LTI system to obtain the following lemma, the proof of which is deferred to~\cref{proof_F} for the ease of exposition.
\begin{lemma}\label{F}
If~$0<\alpha<\frac{(1-\lambda^2)^2}{8\sqrt{5}L}$, then the following inequality holds.
\begin{align*}
\Big(\I_2-(\I_2-\G)^{-1}\mb{H}\Big)\sum_{s=1}^{S}\sum_{t=0}^{q}\mb{u}^{t,s} 
\leq&~
(\I_2-\G)^{-1}\mb{u}^{0,1} + 2(\I_2-\G)^{-1}\sum_{s=1}^{S}\sum_{t=0}^{q}\mb{b}^{t,s}.   
\end{align*}
\end{lemma}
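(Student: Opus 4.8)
The plan is to aggregate the two matrix recursions \eqref{D1}--\eqref{D2} of Lemma~\ref{GS_LTI} over the entire run of \GS~and then pass to the (stable) inverse of the system matrix. Throughout I would use that $\mb{u}^{t,s}$, $\mb{b}^{t,s}$, $\G$, and $\mb{H}$ are all entry-wise nonnegative, so that summation and left-multiplication by nonnegative matrices preserve the stated inequalities; note also that $0<\alpha<\frac{(1-\lambda^2)^2}{8\sqrt5 L}$ is stronger than the bound $\alpha\le\frac{1-\lambda^2}{4\sqrt6 L}$ needed for \eqref{D1}--\eqref{D2}, so those inequalities are in force, while Lemma~\ref{st} additionally gives $\rho(\G)<1$.

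First I would sum \eqref{D1} over $t\in[1,q]$ and $s\in[1,S]$, shifting the inner index on the right, to obtain
$$\sum_{s=1}^{S}\sum_{t=1}^{q}\mb{u}^{t,s}\leq\G\sum_{s=1}^{S}\sum_{t=0}^{q-1}\mb{u}^{t,s}+\sum_{s=1}^{S}\sum_{t=0}^{q-1}\mb{b}^{t,s},$$
and sum \eqref{D2} over $s\in[2,S]$, relabeling $s-1\mapsto s$, to obtain
$$\sum_{s=2}^{S}\mb{u}^{0,s}\leq\G\sum_{s=1}^{S-1}\mb{u}^{q,s}+\sum_{s=1}^{S-1}\mb{b}^{q,s}+\sum_{s=1}^{S-1}\sum_{t=0}^{q}\mb{b}^{t,s}+\mb{H}\sum_{s=1}^{S-1}\sum_{t=0}^{q}\mb{u}^{t,s}.$$
Adding these two inequalities, the left-hand sides merge into $\mb{U}-\mb{u}^{0,1}$, where $\mb{U}:=\sum_{s=1}^{S}\sum_{t=0}^{q}\mb{u}^{t,s}$, because every pair $(t,s)$ with $s\ge1$, $0\le t\le q$ appears exactly once on the left except for $(0,1)$.

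Next I would collapse the right-hand side onto $\mb{U}$ and $\mc{B}:=\sum_{s=1}^{S}\sum_{t=0}^{q}\mb{b}^{t,s}$: the $\G$-terms combine as $\sum_{s=1}^{S}\sum_{t=0}^{q-1}\mb{u}^{t,s}+\sum_{s=1}^{S-1}\mb{u}^{q,s}=\mb{U}-\mb{u}^{q,S}\le\mb{U}$, the $\mb{H}$-term satisfies $\sum_{s=1}^{S-1}\sum_{t=0}^{q}\mb{u}^{t,s}\le\mb{U}$, and the remaining $\mb{b}$-terms split into two pieces each bounded by $\mc{B}$; this yields $(\I_2-\G-\mb{H})\mb{U}\le\mb{u}^{0,1}+2\mc{B}$. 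Finally, since $\rho(\G)<1$ by Lemma~\ref{st}, the Neumann series gives the entry-wise nonnegative inverse $(\I_2-\G)^{-1}=\sum_{k\ge0}\G^{k}$; left-multiplying the previous display by it and using $(\I_2-\G)^{-1}(\I_2-\G)=\I_2$ rewrites the left-hand side as $\big(\I_2-(\I_2-\G)^{-1}\mb{H}\big)\mb{U}$, which is exactly the claim.

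The main obstacle I anticipate is the index bookkeeping in the third step: after the two shifts one must verify that every term discarded when bounding the right-hand side by $(\G+\mb{H})\mb{U}+2\mc{B}$ — namely $-\mb{u}^{q,S}$, $-\mb{b}^{q,S}$, and the $s=S$ slices absent from the $\mb{H}$- and $\mb{b}$-sums — carries a favorable sign, and that the single leftover term on the left is precisely $\mb{u}^{0,1}$, matching the initial-condition term in the statement. Once this is confirmed, the rest is routine nonnegative-matrix algebra, for which $(\I_2-\G)^{-1}\ge0$ is the only structural input.
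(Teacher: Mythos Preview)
Your argument is correct, and it takes a genuinely different route from the paper's own proof. The paper first recasts the double-loop recursions \eqref{D1}--\eqref{D2} as a single loop-less system indexed by $k\in[0,S(q+1)-1]$, then \emph{unrolls} that system to obtain $\mb{u}^{k}\le\G^{k}\mb{u}^{0}+\sum_{r=1}^{k}\G^{k-r}\mb{d}^{r}$, sums over $k$, and finally bounds $\sum_{k\ge0}\G^{k}$ by $(\I_2-\G)^{-1}$ via the Neumann series, before unpacking the perturbation terms $\mb{d}^{k}$ back into the $\mb{b}^{t,s}$ and $\mb{H}\mb{u}^{t,s}$ contributions. By contrast, you never unroll: you sum \eqref{D1} and \eqref{D2} directly, match indices so that the left-hand side becomes $\mb{U}-\mb{u}^{0,1}$ and the right-hand side is bounded by $(\G+\mb{H})\mb{U}+2\mc{B}$, and only then apply $(\I_2-\G)^{-1}$ once at the end. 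Your approach is shorter and more elementary, using the Neumann series only to certify that $(\I_2-\G)^{-1}\ge0$; the paper's approach is more in the spirit of classical LTI analysis and makes the role of the system matrix $\G$ along trajectories explicit, which could be convenient if one later wanted finer, non-aggregated bounds on individual $\mb{u}^{k}$. Your index bookkeeping in the merging step is correct as stated: the only discarded terms are $-\G\mb{u}^{q,S}$, $-\mb{b}^{q,S}$, and the $s=S$ slices in the $\mb{H}$- and extra $\mb{b}$-sums, all of which have the favorable sign because every object involved is entry-wise nonnegative.
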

\begin{proof}
See~\cref{proof_F}.
\end{proof}

In the following lemma, we compute~$(\I_2-\G)^{-1}$ and~$(\I_2-\G)^{-1}\mb{b}$.
\begin{lemma}\label{I-G}
If~$0<\alpha\leq\frac{(1-\lambda^2)^2}{24L}$, then the following entry-wise inequality holds:
\begin{align*}
(\I_2-\G)^{-1} \leq
\begin{bmatrix}
\frac{4}{1-\lambda^2} & \frac{32\a^2L^2}{(1-\lambda^2)^3} \\[1.5ex]
\frac{288}{(1-\lambda^2)^3} & \frac{8}{1-\lambda^2}
\end{bmatrix},
\quad
(\I_2-\G)^{-1}\mb{b}\leq
\begin{bmatrix}
\frac{384\a^4L^2}{(1-\lambda^2)^4} \\[1.5ex]
\frac{96\alpha^2}{(1-\lambda^2)^2}
\end{bmatrix}.
\end{align*}
\end{lemma}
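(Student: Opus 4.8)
The plan is a direct computation with a $2\times 2$ inverse formula, the only subtlety being to certify that the determinant stays bounded away from zero so that all matrix inequalities point the right way. First I would write out
\[
\I_2-\G=\begin{bmatrix}\dfrac{1-\lambda^2}{2} & -\dfrac{2\alpha^2L^2}{1-\lambda^2}\\[2ex] -\dfrac{18}{1-\lambda^2} & \dfrac{1-\lambda^2}{4}\end{bmatrix},
\]
and compute its determinant, $\det(\I_2-\G)=\dfrac{(1-\lambda^2)^2}{8}-\dfrac{36\alpha^2L^2}{(1-\lambda^2)^2}$. Using the step-size bound $0<\alpha\le\frac{(1-\lambda^2)^2}{24L}$, which gives $\alpha^2L^2\le\frac{(1-\lambda^2)^4}{576}$, I would show $\frac{36\alpha^2L^2}{(1-\lambda^2)^2}\le\frac{(1-\lambda^2)^2}{16}$ and hence $\det(\I_2-\G)\ge\frac{(1-\lambda^2)^2}{16}>0$ (recall $1-\lambda^2\in(0,1]$ by~\eqref{spectral}). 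In particular $\I_2-\G$ is invertible.

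Next I would apply the adjugate formula,
\[
(\I_2-\G)^{-1}=\frac{1}{\det(\I_2-\G)}\begin{bmatrix}\dfrac{1-\lambda^2}{4} & \dfrac{2\alpha^2L^2}{1-\lambda^2}\\[2ex]\dfrac{18}{1-\lambda^2} & \dfrac{1-\lambda^2}{2}\end{bmatrix},
\]
and observe that every entry of the adjugate matrix is non-negative and $\det(\I_2-\G)>0$, so $(\I_2-\G)^{-1}$ is entry-wise non-negative. Replacing $\frac{1}{\det(\I_2-\G)}$ by its upper bound $\frac{16}{(1-\lambda^2)^2}$ and simplifying each entry then yields the claimed entry-wise bound on $(\I_2-\G)^{-1}$.

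Finally, since both $(\I_2-\G)^{-1}$ and $\mb b=[\,0,\ \tfrac{12\alpha^2}{1-\lambda^2}\,]^\top$ are entry-wise non-negative, multiplying the just-established entry-wise upper bound of $(\I_2-\G)^{-1}$ by $\mb b$ gives an entry-wise upper bound for $(\I_2-\G)^{-1}\mb b$; computing the two resulting products, $\frac{32\alpha^2L^2}{(1-\lambda^2)^3}\cdot\frac{12\alpha^2}{1-\lambda^2}=\frac{384\alpha^4L^2}{(1-\lambda^2)^4}$ and $\frac{8}{1-\lambda^2}\cdot\frac{12\alpha^2}{1-\lambda^2}=\frac{96\alpha^2}{(1-\lambda^2)^2}$, completes the proof. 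The whole argument is routine; the only point requiring attention is the determinant lower bound, which is exactly what the hypothesis $\alpha\le\frac{(1-\lambda^2)^2}{24L}$ is tailored to provide, and which guarantees both invertibility and the correct direction of the entry-wise inequalities.
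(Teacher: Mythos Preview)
Your proposal is correct and follows essentially the same approach as the paper's proof: compute $\det(\I_2-\G)=\tfrac{(1-\lambda^2)^2}{8}-\tfrac{36\alpha^2L^2}{(1-\lambda^2)^2}$, use the step-size bound to show $\det(\I_2-\G)\ge\tfrac{(1-\lambda^2)^2}{16}$, apply the $2\times2$ adjugate formula, and then multiply by~$\mb{b}$. Your explicit remark that the adjugate entries and~$\mb{b}$ are non-negative, so that replacing $1/\det(\I_2-\G)$ by its upper bound preserves the entry-wise inequality, is a welcome clarification but does not alter the argument.
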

\begin{proof}
We first derive a lower bound for~$\det(\I_2-\G)$. Note that if~$0<\alpha\leq\tfrac{(1-\lambda^2)^2}{24L}$, then~$\det(\I_2-\G) = \frac{(1-\lambda^2)^2}{8} - \frac{36\a^2L^2}{(1-\lambda^2)^2} \geq \frac{(1-\lambda^2)^2}{16}$ and therefore
\begin{align*}
(\I_2-\G)^{-1} \leq \tfrac{16}{(1-\lambda^2)^2}
\begin{bmatrix}
\tfrac{1-\lambda^2}{4} & \tfrac{2\a^2L^2}{1-\lambda^2} 
\\[1.5ex]
\tfrac{18}{1-\lambda^2} & \tfrac{1-\lambda^2}{2}
\end{bmatrix}
=
\begin{bmatrix}
\tfrac{4}{1-\lambda^2} & \tfrac{32\a^2L^2}{(1-\lambda^2)^3} \\[1.5ex]
\tfrac{288}{(1-\lambda^2)^3} & \tfrac{8}{1-\lambda^2}
\end{bmatrix},
\end{align*}
and the proof follows by the definition of~$\mb{b}$ in Lemma~\ref{GS_LTI}.
\end{proof}
\subsection{Proof of Lemma~\ref{consensus_bound1}}
Using Lemma~\ref{I-G}, we have: if~$0<\a\leq\frac{(1-\lambda^2)^2}{8\sqrt{42}L}$,
\begin{align}\label{G1}
\I_2 - (\I_2-\G)^{-1}\mb{H} 
\geq
\begin{bmatrix}
1-\tfrac{1344\alpha^2 L^2}{(1-\lambda^2)^4} & 0 \\[1.5ex]
-\tfrac{336}{(1-\lambda^2)^2} & 1
\end{bmatrix}
\geq
\begin{bmatrix}
\tfrac{1}{2} & 0 \\[1.5ex]
-\tfrac{336}{(1-\lambda^2)^2} & 1
\end{bmatrix}.
\end{align}
Finally, we apply~\eqref{G1} and Lemma~\ref{I-G} to Lemma~\ref{F} to obtain
\begin{align}\label{addon3}
\frac{1}{2n}\sum_{s=1}^S\sum_{t=0}^q\mbb{E}\Big[\big\|\mb{x}^{t,s}-\mb{J}\mb{x}^{t,s}\big\|^2\Big] 
\leq&~\frac{32\a^2}{n(1-\lambda^2)^3}\E\Big[\big\|\mb{y}^{1,1}-\mb{J}\mb{y}^{1,1}\big\|^2\Big] \nonumber\\
&+ \frac{768\a^4L^2}{(1-\lambda^2)^4}\sum_{s=1}^S\sum_{t=0}^q\mathbb{E}\Big[\big\|\ol{\v}^{t,s}\big\|^2\Big].
\end{align}
The proof follows by applying the first statement in Lemma~\ref{gt_contrc} to~\eqref{addon3}.

\newpage
\section{Proof of Lemma~\ref{DS_sum2}}\label{proof_DS_sum2}
We have:~$\forall s\geq1$ and $\forall t\in[0,q]$,
\begin{align}\label{i00}
\tfrac{1}{2n}\tsum_{i=1}^n\E\Big[\b\|\nabla F\big(\x_i^{t,s}\big)\b\|^2\Big]  
\leq&~\tfrac{1}{n}\tsum_{i=1}^n\E\Big[\b\|\nabla F\big(\x_i^{t,s}\big)-\nabla F(\ol{\x}^{t,s})\b\|^2\Big]
+\E\Big[\b\|\nabla F(\ol{\x}^{t,s})\b\|^2\Big] \nonumber\\
\leq&~\tfrac{L^2}{n}\E\Big[\b\|\x^{t,s} - \J\x^{t,s}\b\|^2\Big]
+\E\Big[\b\|\nabla F(\ol{\x}^{t,s})\b\|^2\Big],
\end{align}
where the second line is due to the $L$-smoothness of $F$.
Since $F$ is bounded below by~$F^*$, we may apply~\eqref{i00} to Lemma~\ref{DS_sum1} to obtain the following: if~$0<\a\leq\frac{1}{2L}$,
\begin{align}\label{i10}
F^* \leq&~ F\big(\ol{\x}^{0,1}\big) - \tfrac{\a}{4n}\tsum_{i=1}^n\tsum_{s=1}^S\tsum_{t=0}^{q}\E\left[\left\|\nabla F\big(\x_i^{t,s}\big)\right\|^2\right]
- \tfrac{\a}{4}\tsum_{s=1}^S\tsum_{t=0}^{q}\E\left[\left\|\ol{\v}^{t,s}\right\|^2\right] \nonumber\\
&+ \a\tsum_{s=1}^S\tsum_{t=0}^{q}\E\left[\left\|\ol{\v}^{t,s}-\ol{\nabla\mb{f}}(\x^{t,s})\right\|^2\right] + \frac{3\a L^2}{2}\tsum_{s=1}^S\tsum_{t=0}^{q}{\E\Big[\frac{\|\mb{x}^{t,s}-\mb{J}\mb{x}^{t,s}\|^2}{n}\Big]}.
\end{align}
We then apply Lemma~\ref{vrlem} to~\eqref{i10} to obtain: if~$0<\a\leq\frac{1}{2L}$,
\begin{align}\label{ifi}
F^*
\leq&~F\big(\ol{\x}^{0,1}\big) - \tfrac{\a}{4n}\tsum_{i=1}^n\tsum_{s=1}^S\tsum_{t=0}^{q}\E\left[\left\|\nabla F\big(\x_i^{t,s}\big)\right\|^2\right]
- \tfrac{\a}{8}\tsum_{s=1}^S\tsum_{t=0}^{q}\E\left[\left\|\ol{\v}^{t,s}\right\|^2\right] \nonumber\\
&+\a L^2\left(\tfrac{3}{2}+\tfrac{6q}{nB}\right)\tsum_{s=1}^S\tsum_{t=0}^{q}\E\Big[\frac{\|\mb{x}^{t,s}-\mb{J}\mb{x}^{t,s}\|^2}{n}\Big]  \nonumber\\
&- \tfrac{\a}{8}\left(1-\tfrac{24q\a^2L^2}{nB}\right)\tsum_{s=1}^S\tsum_{t=0}^{q}\E\left[\left\|\ol{\v}^{t,s}\right\|^2\right].
\end{align}
If~$0<\a\leq\frac{\sqrt{nB}}{2\sqrt{6q}L}$ then~$1-\frac{24\a^2qL^2}{nB}\geq0$ and thus the last term in~\eqref{ifi} may be dropped. 
We finally apply Lemma~\ref{consensus_bound1} to~\eqref{ifi} to obtain: if~$0<\alpha\leq\min\Big\{\frac{(1-\lambda^2)^2}{4\sqrt{42}},\sqrt{\frac{nB}{6q}}\Big\}\frac{1}{2L}$, 
\begin{align*}
F^*
\leq&~F\big(\ol{\x}^{0,1}\big) 
- \tfrac{\a}{4n}\tsum_{i=1}^n\tsum_{s=1}^S\tsum_{t=0}^{q}\E\Big[\big\|\nabla F\big(\x_i^{t,s}\big)\big\|^2\Big]
+\left(\tfrac{7}{4}+\tfrac{6q}{nB}\right) \tfrac{64\a^3L^2}{(1-\lambda^2)^3}\tfrac{\|\nabla\mb{f}(\mb{x}^{0,1})\|^2}{n}\nonumber\\
&-\tfrac{\a L^2}{4}\tsum_{s=1}^S\tsum_{t=0}^{q}\E\Big[\frac{\|\mb{x}^{t,s}-\mb{J}\mb{x}^{t,s}\|^2}{n}\Big] \n\\
&-\tfrac{\a}{8}\left(1 - \left(\tfrac{7}{4}+\tfrac{6q}{nB}\right)\tfrac{12288\a^4L^4}{(1-\lambda^2)^4}\right)\tsum_{s=1}^S\tsum_{t=0}^q\mathbb{E}\Big[\b\|\ol{\v}^{t,s}\b\|^2\Big].
\end{align*}
We observe that if~$0<\a\leq\big(\frac{4nB}{7nB+24q}\big)^{\sfrac{1}{4}}\frac{1-\lambda^2}{12L}$, then~$1 - (\frac{7}{4}+\frac{6q}{nB})\frac{12288\a^4L^4}{(1-\lambda^2)^4}\geq0$ and thus the last term in the above inequality may be dropped; the proof follows.

\section{Proof of Lemma~\ref{vrs}}\label{svrs}
In the following, we use the notation in~\eqref{notation_v}. Using the update of each local recursive gradient estimator~$\v_i^{t,s}$,
we have that:~$\forall i\in\mc{V},\forall s\geq1$ and~$t\in[1,q]$,
\begin{align}
&\E\Big[\b\|\v_i^{t,s} - \nabla f_i(\x_i^{t,s}) \b\|^2 \big| \F\Big]    \nonumber\\
=&~\E\bigg[\Big\|\wh{\nabla}_{i}^{t,s} - \nabla f_i\big(\x_i^{t,s}\big) + \nabla f_i\big(\x_i^{t-1,s}\big) + \mb{v}_{i}^{t-1,s} - \nabla f_i\big(\x_i^{t-1,s}\big) \Big\|^2 \Big| \F\bigg]  \nonumber\\
=&~\E\bigg[\Big\|\tfrac{1}{B}\tsum_{l=1}^B\Big(\wh{\nabla}_{i,l}^{t,s} - \nabla f_i\big(\x_i^{t,s}\big) + \nabla f_i\big(\x_i^{t-1,s}\big)\Big)\Big\|^2 \Big|\F\bigg] 
+\big\|\mb{v}_{i}^{t-1,s} - \nabla f_i\big(\x_i^{t-1,s}\big)\big\|^2, \nonumber\\
=&~\tfrac{1}{B^2}\tsum_{l=1}^B\E\bigg[\Big\|\wh{\nabla}_{i,l}^{t,s} - \nabla f_i\big(\x_i^{t,s}\big) + \nabla f_i\big(\x_i^{t-1,s}\big)\Big\|^2 \Big|\F\bigg]
+\big\|\mb{v}_{i}^{t-1,s} - \nabla f_i\big(\x_i^{t-1,s}\big)\big\|^2, \nonumber\\
\leq&~\tfrac{1}{B^2}\tsum_{l=1}^B\E\bigg[\Big\|\gf_{i,\t_{i,l}^{t,s}}\big(\x_{i}^{t,s}\big) - \gf_{i,\t_{i,l}^{t,s}}\big(\x_{i}^{t-1,s}\big)\Big\|^2 \Big|\F\bigg] + \big\|\mb{v}_{i}^{t-1,s} - \nabla f_i\big(\x_i^{t-1,s}\big)\big\|^2,
\nonumber\\
\leq&~\tfrac{L^2}{B}\big\|\x_{i}^{t,s} - \x_{i}^{t-1,s}\big\|^2 + \b\|\mb{v}_{i}^{t-1,s} - \nabla f_i\big(\x_i^{t-1,s}\big)\b\|^2.
\n
\end{align}
The above derivations follow a similar line of arguments as in the proof of Lemma~\ref{vr00} and hence we omit the details here. Summing up the last inequality above over~$i$ from~$1$ to~$n$ and taking the expectation, we have:~$\forall s\geq1$ and~$t\in[1,q]$, 
\begin{align}\label{1}
\E\Big[\b\|\v^{t,s} - \nabla\mb{f}\big(\x^{t,s}\big) \b\|^2 \Big]
\leq \E\Big[\tfrac{L^2}{B}\b\|\x^{t,s} - \x^{t-1,s}\b\|^2 + \b\|\mb{v}^{t-1,s} - \nabla\mb{f}\big(\x^{t-1,s}\big)\b\|^2\Big].
\end{align}
Recall from~\eqref{xdiff} that~$\forall s\geq1$ and~$t\in[1,q]$,
\begin{align}\label{2}
\big\|\x^{t,s} - \x^{t-1,s}\big\|^2 
\leq 3\big\|\x^{t,s} - \mb{J}\x^{t,s}\big\|^2 + 3n\a^2\big\|\ol{\v}^{t-1,s}\big\|^2
 + 3\big\|\x^{t-1,s} - \mb{J}\x^{t-1,s}\big\|^2.
\end{align}
Applying~\eqref{2} to~\eqref{1} obtains:~$\forall s\geq1$ and~$t\in[1,q]$,
\begin{align}\label{3}
\E\Big[\b\|\v^{t,s} - \nabla\mb{f}(\x^{t,s})\b\|^2\Big]
\leq&~\E\Big[\b\|\v^{t-1,s} - \nabla\mb{f}(\x^{t-1,s})\b\|^2\Big] + \tfrac{3n\a^2L^2}{B}\E\Big[\b\|\ol{\v}^{t-1,s}\b\|^2\Big] \n\\
&+ \tfrac{3L^2}{B}\E\Big[\b\|\x^{t,s} - \mb{J}\x^{t,s}\b\|^2\Big]
+ \tfrac{3L^2}{B}\E\Big[\b\|\x^{t-1,s}-\mb{J}\x^{t-1,s}\b\|^2\Big]. 
\end{align}
Recall that~$\v^{0,s} = \nabla\mb{f}(\x^{0,s}),\forall s\geq1$, and we take the telescoping sum of~\cref{3} over~$t$ to obtain:~$\forall s\geq1$ and~$t\in[1,q]$,
\begin{align*}
\E\Big[\b\|\v^{t,s} - \nabla\mb{f}(\x^{t,s}) \b\|^2\Big] 
\leq&~\tfrac{3n\a^2L^2}{B}\tsum_{u=1}^{t}\E\Big[\b\|\ol{\v}^{u-1,s}\b\|^2\Big] + \tfrac{3L^2}{B}\tsum_{u=1}^{t}\E\Big[\b\|\x^{u,s} - \J\x^{u,s}\b\|^2\Big] \nonumber\\
&+ \tfrac{3L^2}{B}\tsum_{u=1}^{t}\E\Big[\b\|\x^{u-1,s}-\J\x^{u-1,s}\b\|^2\Big].
\end{align*}
The proof follows by merging the last two terms on the RHS of the inequality above.

\section{Proof of Lemma~\ref{F}}\label{proof_F}
\subsection{Step 1: A loop-less dynamical system}\label{loopless_sys}
For the ease of calculations, we first write the LTI system in Lemma~\ref{GS_LTI} in a equivalent \textit{loopless} form. To do this, we unroll the original \textit{double loop} sequences~$\{\mb{u}^{t,s}\}$ and~$\{\mb{b}^{t,s}\}$, where~$t\in[0,q]$ and~$s\in[1,S]$, respectively as \emph{loopless} sequences~$\{\mb{u}^k\}$ and~$\{\mb{b}^k\}$, where~$k\in[0,S(q+1)-1]$, as follows:
\begin{align}\label{2_1}
\mb{u}^{k} := \mb{u}^{t,s},
\quad \mb{b}^{k}:=\mb{b}^{t,s}, \qquad\mbox{where}~k = t + (s-1)(q+1),
\end{align}
for~$t\in[0,q]$ and $s\in[1,S]$.
Reversely, given~$\mb{u}^k$ and~$\mb{b}^{k}$, for~$k\in[0,S(q+1)-1]$, we can find their positions in the original double loop sequence, $\mb{u}^{t,s}$ and~$\mb{b}^{t,s}$, by
\begin{align}\label{1_2}
t = \mod(k,q+1)~\text{and}~s 
= \floor*{k/(q+1)} + 1,
\qquad \text{for}~k\in[0,S(q+1)-1].
\end{align}  
This one-on-one correspondence is visualized in~\cref{D2S}.

\begin{table}[!ht]\label{D2S}
\footnotesize
\caption{The one-on-one mapping between the single-loop sequences~$\{\mb{u}^{k}\},\{\mb{b}^{k}\}$ for~$k\in[0,S(q+1)-1]$ and the double-loop sequences~$\{\mb{u}^{t,s}\},\{\mb{b}^{t,s}\}$ for~$s\in[1,S]$ and~$t\in[0,q]$.}
\begin{center}
\begin{tabular}{|c|c|}
\hline
$ k$ & $ (t,s) $ \\ \hline
$0, \cdots,q$ & $(0,1),\cdots,(q,1)$  \\ 
$q+1, \cdots,2q+1$ & $(0,2),\cdots,(q,2)$ \\ 
$\cdots$ & $\cdots$        \\ 
$(S-1)(q+1),\cdots,S(q+1)-1$ & $(0,S),\cdots,(q,S)$ \\ \hline
\end{tabular}
\end{center}
\end{table}

With~\eqref{2_1} and~\eqref{1_2} at hand, it can be verified that the following single-loop system is equivalent to the double loop system in~\eqref{D1}~and~\eqref{D2}. 
For~${k\in[1,S(q+1)-1]}$,
\begin{align}
&\mb{u}^{k} \leq \G\mb{u}^{k-1} + \mb{b}^{k-1}, \qquad\qquad\qquad\qquad\qquad\qquad\qquad~\text{if}~\mod(k,q+1) \neq 0.
\label{r1}\\
&\mb{u}^{z(q+1)} \leq \G\mb{u}^{z(q+1)-1} + \mb{b}^{z(q+1)-1} 
+ \tsum_{r=(z-1)(q+1)}^{z(q+1)-1} \mb{h}^r,~~~~~\forall z\in[1,S-1], \label{r2}
\end{align}
where~$\mb{h}^k := \mb{b}^k + \mb{H}\mb{u}^k,\forall k\in[0,S(q+1)-1]$. The system in~\eqref{r1} and~\eqref{r2} can be further written equivalently as the following:~$\forall k\in[1,S(q+1)-1]$,
\begin{align}\label{recursion_merged}
\mb{u}^{k} 
\leq&~\G\mb{u}^{k-1} + \mb{d}^k
\end{align}
where~$\mb{d}^k := 
\mb{b}^{k-1}
+ \mathbbm{1}\big\{\mod (k,q+1) = 0\big\}\sum_{r=k-(q+1)}^{k-1} \mb{h}^r$, $\mathbbm{1}\{\cdot\}$ is the indicator function of an event,
and~$\sum_{r = k-(q+1)}^{k-1}\mb{h}^r := 0$ for~$k\in[1,q]$.

\subsection{Step 2: Analyzing the recursion}
We recursively apply~\eqref{recursion_merged} over~$k$ to obtain:~$\forall k\in[1,S(q+1)-1]$, 
\begin{align}\label{addon4}
\mb{u}^{k} 
\leq \G^{k}\mb{u}^{0} + \tsum_{r=1}^{k}\G^{k-r}\mb{d}^{r}.
\end{align}
Summing up~\eqref{addon4} over~$k$ from~$0$ to~$S(q+1)-1$ gives: if~$0<\a\leq\frac{(1-\lambda^2)^2}{8\sqrt{5}L}$,
\begin{align}\label{S1}
\tsum_{k=0}^{S(q+1)-1}\mb{u}^{k} 
\leq&~
\tsum_{k=0}^{S(q+1)-1}\G^k\mb{u}^0 + \tsum_{k=1}^{S(q+1)-1}\tsum_{r=1}^{k}\G^{k-r}\mb{d}^{r} \n\\
\leq&~
\left(\tsum_{k=0}^{\infty}\G^k\right)\mb{u}^0 + \tsum_{k=1}^{S(q+1)-1}\left(\sum_{r=1}^{\infty}\G^{r}\right)\mb{d}^{k} \n\\
=&~\left(\I_2 - \G\right)^{-1}\mb{u}^0 + \left(\I_2 - \G\right)^{-1}\tsum_{k=1}^{S(q+1)-1}\mb{d}^{k}.
\end{align}
To proceed, we recall the definition of~$\mb{d}^k$ and~$\mb{h}^k$ in~\cref{loopless_sys} and observe that
\begin{align}\label{S2}
\tsum_{k=1}^{S(q+1)-1}\mb{d}^{k}
=&~\tsum_{k=0}^{S(q+1)-2}\mb{b}^{k}
+ \tsum_{k=1}^{S(q+1)-1}\Big(\mathbbm{1}\big\{\mod (k,q+1) = 0\big\}\tsum_{r=k-(q+1)}^{k-1} \mb{h}^r\Big) \n\\
=&~\tsum_{k=0}^{S(q+1)-2}\mb{b}^{k}
+ \tsum_{z=1}^{S-1}\left(\tsum_{r=(z-1)(q+1)}^{z(q+1)-1} \mb{h}^r\right) \n\\
=&~\tsum_{k=0}^{S(q+1)-2}\mb{b}^{k}
+ \tsum_{k=0}^{(S-1)(q+1)-1}\mb{h}^k \n\\
\leq&~2\tsum_{k=0}^{S(q+1)-1}\mb{b}^{k}
+ \tsum_{k=0}^{(S-1)(q+1)-1}\mb{H}\mb{u}^k,
\end{align}
where the first line and the last line are due to the definition of~$\mb{d}^k$ and~$\mb{h}^k$ respectively. 
Finally, we use~\eqref{S2} in~\eqref{S1} to obtain: if~$0<\a\leq\frac{(1-\lambda^2)^2}{8\sqrt{5}L}$, then
\begin{align*}
\tsum_{k=0}^{S(q+1)-1}\mb{u}^{k} 
\leq&~
(\I_2-\G)^{-1}\mb{u}^0 + 2(\I_2-\G)^{-1}\tsum_{k=0}^{S(q+1)-1}\mb{b}^k \nonumber\\ 
&+ (\I_2-\G)^{-1}\mb{H}\tsum_{k=0}^{S(q+1)-1}\mb{u}^k,
\end{align*}
which is the same as
\begin{align*}
\left(\I_2 - (\I_2-\G)^{-1}\mb{H}\right)\tsum_{k=0}^{S(q+1)-1}\mb{u}^{k} 
\leq
(\I_2-\G)^{-1}\mb{u}^0 + 2(\I_2-\G)^{-1}\tsum_{k=0}^{S(q+1)-1}\mb{b}^k. 
\end{align*}
We conclude the proof of Lemma~\ref{F} by rewriting the above inequality in the original double loop form. 

\bibliographystyle{siamplain}
\bibliography{reference}

\begin{thebibliography}{10}

\bibitem{SVRG_allen}
{\sc Z.~Allen-Zhu and E.~Hazan}, {\em Variance reduction for faster non-convex
  optimization}, in International conference on machine learning, 2016,
  pp.~699--707.

\bibitem{LR_NCVX}
{\sc A.~Antoniadis, I.~Gijbels, and M.~Nikolova}, {\em Penalized likelihood
  regression for generalized linear models with non-quadratic penalties}, Ann.
  Inst. Stat. Math., 63 (2011), pp.~585--615.

\bibitem{SGP_ICML}
{\sc M.~Assran, N.~Loizou, N.~Ballas, and M.~Rabbat}, {\em Stochastic gradient
  push for distributed deep learning}, in Proceedings of the 36th International
  Conference on Machine Learning, 2019, pp.~97: 344--353.

\bibitem{OPT_ML}
{\sc L.~Bottou, F.~E. Curtis, and J.~Nocedal}, {\em Optimization methods for
  large-scale machine learning}, SIAM Review, 60 (2018), pp.~223--311.

\bibitem{DVR_master_worker}
{\sc S.~Cen, H.~Zhang, Y.~Chi, W.~Chen, and T.-Y. Liu}, {\em Convergence of
  distributed stochastic variance reduced methods without sampling extra data},
  IEEE Trans. on Signal Process., 68 (2020), pp.~3976--3989.

\bibitem{SPM_Hong}
{\sc T.-H. Chang, M.~Hong, H.-T. Wai, X.~Zhang, and S.~Lu}, {\em Distributed
  learning in the nonconvex world: From batch data to streaming and beyond},
  IEEE Signal Process. Mag., 37 (2020), pp.~26--38.

\bibitem{diffusion_Chen}
{\sc J.~Chen and A.~H. Sayed}, {\em Diffusion adaptation strategies for
  distributed optimization and learning over networks}, IEEE Trans. Signal
  Process., 60 (2012), pp.~4289--4305.

\bibitem{DSGD1_Chen}
{\sc J.~Chen and A.~H. Sayed}, {\em On the learning behavior of adaptive
  networks—part i: Transient analysis}, IEEE Transactions on Information
  Theory, 61 (2015), pp.~3487--3517.

\bibitem{SAGA}
{\sc A.~Defazio, F.~Bach, and S.~Lacoste-Julien}, {\em {SAGA: A} fast
  incremental gradient method with support for non-strongly convex composite
  objectives}, in Proc. Adv. Neural Inf. Process. Syst., 2014, pp.~1646--1654.

\bibitem{NEXT_scutari}
{\sc P.~Di~Lorenzo and G.~Scutari}, {\em {NEXT: I}n-network nonconvex
  optimization}, IEEE Trans. Signal Inf. Process. Netw. Process., 2 (2016),
  pp.~120--136.

\bibitem{SPIDER}
{\sc C.~Fang, C.~J. Li, Z.~Lin, and T.~Zhang}, {\em {SPIDER:} near-optimal
  non-convex optimization via stochastic path-integrated differential
  estimator}, in Proc. Adv. Neural Inf. Process. Syst., 2018, pp.~689--699.

\bibitem{SGD_Lan}
{\sc S.~Ghadimi and G.~Lan}, {\em Stochastic first-and zeroth-order methods for
  nonconvex stochastic programming}, SIAM J. Optim., 23 (2013), pp.~2341--2368.

\bibitem{DS_digraph}
{\sc B.~Gharesifard and J.~Cort{\'e}s}, {\em Distributed strategies for
  generating weight-balanced and doubly stochastic digraphs}, European Journal
  of Control, 18 (2012), pp.~539--557.

\bibitem{matrix_analysis}
{\sc R.~A. Horn and C.~R. Johnson}, {\em Matrix analysis}, Cambridge University
  Press, 2012.

\bibitem{GT_jakovetic}
{\sc D.~Jakoveti{\'c}}, {\em A unification and generalization of exact
  distributed first-order methods}, IEEE Trans. Signal Inf. Process. Netw.
  Process., 5 (2018), pp.~31--46.

\bibitem{commslide_lan}
{\sc G.~Lan, S.~Lee, and Y.~Zhou}, {\em Communication-efficient algorithms for
  decentralized and stochastic optimization}, Mathematical Programming, 180
  (2020), pp.~237--284.

\bibitem{SPD_Lei}
{\sc J.~Lei, H.~Chen, and H.~Fang}, {\em Asymptotic properties of primal-dual
  algorithm for distributed stochastic optimization over random networks with
  imperfect communications}, SIAM J. Control Optim., 56 (2018), pp.~2159--2188.

\bibitem{Network-DANE}
{\sc B.~Li, S.~Cen, Y.~Chen, and Y.~Chi}, {\em Communication-efficient
  distributed optimization in networks with gradient tracking and variance
  reduction}, J. Mach. Learn. Res., 21 (2020), pp.~1--51,
  \url{http://jmlr.org/papers/v21/20-210.html}.

\bibitem{NIDS}
{\sc Z.~Li, W.~Shi, and M.~Yan}, {\em A decentralized proximal-gradient method
  with network independent step-sizes and separated convergence rates}, IEEE
  Trans. Signal Process., 67 (2019), pp.~4494--4506.

\bibitem{DSGD_nips}
{\sc X.~Lian, C.~Zhang, H.~Zhang, C.-J. Hsieh, W.~Zhang, and J.~Liu}, {\em {Can
  decentralized algorithms outperform centralized algorithms? {A} case study
  for decentralized parallel stochastic gradient descent}}, in Adv. Neural Inf.
  Process. Syst., 2017, pp.~5330--5340.

\bibitem{DSA}
{\sc A.~Mokhtari and A.~Ribeiro}, {\em {DSA: Decentralized double stochastic
  averaging gradient algorithm}}, J. Mach. Learn. Res., 17 (2016),
  pp.~2165--2199.

\bibitem{tutorial_nedich}
{\sc A.~Nedi{\'c}, A.~Olshevsky, and M.~G. Rabbat}, {\em Network topology and
  communication-computation tradeoffs in decentralized optimization}, P. IEEE,
  106 (2018), pp.~953--976.

\bibitem{DIGing}
{\sc A.~Nedich, A.~Olshevsky, and W.~Shi}, {\em Achieving geometric convergence
  for distributed optimization over time-varying graphs}, SIAM J. Optim., 27
  (2017), pp.~2597--2633.

\bibitem{SGD_Nemirovski}
{\sc A.~Nemirovski, A.~Juditsky, G.~Lan, and A.~Shapiro}, {\em Robust
  stochastic approximation approach to stochastic programming}, SIAM J. Optim.,
  19 (2009), pp.~1574--1609.

\bibitem{nesterov_book}
{\sc Y.~Nesterov}, {\em Lectures on convex optimization}, vol.~137, Springer,
  2018.

\bibitem{SARAH}
{\sc L.~M. Nguyen, J.~Liu, K.~Scheinberg, and M.~Takac}, {\em {SARAH: A} novel
  method for machine learning problems using stochastic recursive gradient}, in
  Proc. 34th Int. Conf. Mach. Learn., 2017, pp.~2613--2621.

\bibitem{inexact_SARAH}
{\sc L.~M. Nguyen, K.~Scheinberg, and M.~Takac}, {\em Inexact sarah algorithm
  for stochastic optimization}, Optimization Methods and Software,  (2020),
  pp.~1--22.

\bibitem{sarah_ncvx}
{\sc N.~H. Pham, L.~M. Nguyen, D.~T. Phan, and Q.~Tran-Dinh}, {\em {ProxSARAH:}
  an efficient algorithmic framework for stochastic composite nonconvex
  optimization}, Journal of Machine Learning Research, 21 (2020), pp.~1--48.

\bibitem{Swarming_Pu}
{\sc S.~Pu and A.~Garcia}, {\em Swarming for faster convergence in stochastic
  optimization}, SIAM J. Control Optim., 56 (2018), pp.~2997--3020.

\bibitem{MP_Pu}
{\sc S.~Pu and A.~Nedich}, {\em Distributed stochastic gradient tracking
  methods}, Math. Program.,  (2020), pp.~1--49.

\bibitem{DSGD_Pu}
{\sc S.~Pu, A.~Olshevsky, and I.~C. Paschalidis}, {\em A sharp estimate on the
  transient time of distributed stochastic gradient descent}, arXiv preprint
  arXiv:1906.02702,  (2019).

\bibitem{harnessing}
{\sc G.~Qu and N.~Li}, {\em Harnessing smoothness to accelerate distributed
  optimization}, IEEE Trans. Control. Netw. Syst., 5 (2017), pp.~1245--1260.

\bibitem{DSGD_nedich}
{\sc S.~S. Ram, A.~Nedi{\'c}, and V.~V. Veeravalli}, {\em Distributed
  stochastic subgradient projection algorithms for convex optimization}, J.
  Optim. Theory Appl., 147 (2010), pp.~516--545.

\bibitem{SVRG_reddi}
{\sc S.~J. Reddi, A.~Hefny, S.~Sra, B.~P{\'o}czos, and A.~Smola}, {\em
  Stochastic variance reduction for nonconvex optimization}, in International
  conference on machine learning, 2016, pp.~314--323.

\bibitem{MP_scutari}
{\sc G.~Scutari and Y.~Sun}, {\em Distributed nonconvex constrained
  optimization over time-varying digraphs}, Math. Program., 176 (2019),
  pp.~497--544.

\bibitem{EXTRA}
{\sc W.~Shi, Q.~Ling, G.~Wu, and W.~Yin}, {\em {EXTRA: A}n exact first-order
  algorithm for decentralized consensus optimization}, SIAM J. Optim., 25
  (2015), pp.~944--966.

\bibitem{lower_sun}
{\sc H.~Sun and M.~Hong}, {\em Distributed non-convex first-order optimization
  and information processing: Lower complexity bounds and rate optimal
  algorithms}, IEEE Transactions on Signal processing, 67 (2019),
  pp.~5912--5928.

\bibitem{D_Get}
{\sc H.~Sun, S.~Lu, and M.~Hong}, {\em Improving the sample and communication
  complexity for decentralized non-convex optimization: Joint gradient
  estimation and tracking}, in International Conference on Machine Learning,
  2020, pp.~9217--9228.

\bibitem{DSGD_Swenson}
{\sc B.~Swenson, R.~Murray, S.~Kar, and H.~V. Poor}, {\em Distributed
  stochastic gradient descent and convergence to local minima}, arXiv preprint
  arXiv:2003.02818,  (2020).

\bibitem{D2}
{\sc H.~Tang, X.~Lian, M.~Yan, C.~Zhang, and J.~Liu}, {\em {$D^{2}$:
  D}ecentralized training over decentralized data}, in International Conference
  on Machine Learning, 2018, pp.~4848--4856.

\bibitem{DSGD_vlaski_2}
{\sc S.~Vlaski and A.~H. Sayed}, {\em Distributed learning in non-convex
  environments--{Part II: P}olynomial escape from saddle-points},
  arXiv:1907.01849,  (2019).

\bibitem{GT_Wai}
{\sc H.~Wai, J.~Lafond, A.~Scaglione, and E.~Moulines}, {\em Decentralized
  frank--wolfe algorithm for convex and nonconvex problems}, IEEE Trans. Autom.
  Control, 62 (2017), pp.~5522--5537.

\bibitem{slowmo}
{\sc J.~Wang, V.~Tantia, N.~Ballas, and M.~Rabbat}, {\em Slowmo: Improving
  communication-efficient distributed sgd with slow momentum}, arXiv preprint
  arXiv:1910.00643,  (2019).

\bibitem{DSGD_ZO_SICON_Hong}
{\sc Y.~Wang, W.~Zhao, Y.~Hong, and M.~Zamani}, {\em Distributed
  subgradient-free stochastic optimization algorithm for nonsmooth convex
  functions over time-varying networks}, SIAM J. Control Optim., 57 (2019),
  pp.~2821--2842.

\bibitem{spiderboost}
{\sc Z.~Wang, K.~Ji, Y.~Zhou, Y.~Liang, and V.~Tarokh}, {\em Spiderboost and
  momentum: Faster variance reduction algorithms}, in Proc. Adv. Neural Inf.
  Process. Syst., 2019, pp.~2403--2413.

\bibitem{probability_with_martinagles}
{\sc D.~Williams}, {\em Probability with martingales}, Cambridge university
  press, 1991.

\bibitem{Prox_SVRG}
{\sc L.~Xiao and T.~Zhang}, {\em A proximal stochastic gradient method with
  progressive variance reduction}, SIAM J. Optim., 24 (2014), pp.~2057--2075.

\bibitem{SPM_Xin}
{\sc R.~Xin, S.~Kar, and U.~A. Khan}, {\em Decentralized stochastic
  optimization and machine learning: A unified variance-reduction framework for
  robust performance and fast convergence}, IEEE Signal Process. Mag., 37
  (2020), pp.~102--113.

\bibitem{GTVR}
{\sc R.~Xin, U.~A. Khan, and S.~Kar}, {\em Variance-reduced decentralized
  stochastic optimization with accelerated convergence}, IEEE Trans. Signal
  Process., 68 (2020), pp.~6255--6271.

\bibitem{improved_DSGT_Xin}
{\sc R.~Xin, U.~A. Khan, and S.~Kar}, {\em An improved convergence analysis for
  decentralized online stochastic non-convex optimization}, IEEE Trans. Signal
  Process., 69 (2021), pp.~1842--1858.

\bibitem{AugDGM}
{\sc J.~Xu, S.~Zhu, Y.~C. Soh, and L.~Xie}, {\em Augmented distributed gradient
  methods for multi-agent optimization under uncoordinated constant stepsizes},
  in 2015 54th IEEE Conference on Decision and Control (CDC), 2015,
  pp.~2055--2060.

\bibitem{DOPT_survey_Yang}
{\sc T.~Yang, X.~Yi, J.~Wu, Y.~Yuan, D.~Wu, Z.~Meng, Y.~Hong, H.~Wang, Z.~Lin,
  and K.~H. Johansson}, {\em A survey of distributed optimization}, Annual
  Reviews in Control, 47 (2019), pp.~278--305.

\bibitem{DSGD_constraints_YDM}
{\sc D.~Yuan, D.~W. Ho, and Y.~Hong}, {\em On convergence rate of distributed
  stochastic gradient algorithm for convex optimization with inequality
  constraints}, SIAM J. Control Optim., 54 (2016), pp.~2872--2892.

\bibitem{SED}
{\sc K.~Yuan, S.~A. Alghunaim, B.~Ying, and A.~H. Sayed}, {\em On the influence
  of bias-correction on distributed stochastic optimization}, IEEE Trans.
  Signal Process.,  (2020).

\bibitem{DGD_Yin}
{\sc K.~Yuan, Q.~Ling, and W.~Yin}, {\em On the convergence of decentralized
  gradient descent}, SIAM J. Optim., 26 (2016), pp.~1835--1854.

\bibitem{DAVRG}
{\sc K.~Yuan, B.~Ying, J.~Liu, and A.~H. Sayed}, {\em Variance-reduced
  stochastic learning by~networked agents under random reshuffling}, IEEE
  Trans. Signal Process., \!67 (2018), pp.~351--366.

\bibitem{lower_zhou}
{\sc D.~Zhou and Q.~Gu}, {\em Lower bounds for smooth nonconvex finite-sum
  optimization}, in International Conference on Machine Learning, PMLR, 2019,
  pp.~7574--7583.

\bibitem{optimal_zhou}
{\sc D.~Zhou, P.~Xu, and Q.~Gu}, {\em Stochastic nested variance reduction for
  nonconvex optimization}, Journal of Machine Learning Research, 21 (2020),
  pp.~1--63.

\end{thebibliography}
\end{document}